\newtheorem{thm}{Theorem}[subsection]
\newtheorem{prop}[thm]{Proposition}
\theoremstyle{definition}
\theoremstyle{remark}
\theoremstyle{plain}
\newtheorem{theorem} {Theorem}[section]
\newtheorem{lemma}[theorem]{Lemma}
\theoremstyle{definition}
\theoremstyle{remark}
\newtheorem{remark}[theorem]{Remark}
\newcommand{\be} {\begin{equation}}
\newcommand{\ee} {\end{equation}}
\newcommand{\bea} {\begin{eqnarray}}
\newcommand{\eea} {\end{eqnarray}}
\newcommand{\Bea} {\begin{eqnarray*}}
\newcommand{\Eea} {\end{eqnarray*}}
\newcommand{\pa} {\partial}
\newcommand{\al} {\alpha}
\newcommand{\de} {\delta}
\newcommand{\Om} {\Omega}
\newcommand{\om} {\omega}
\newcommand{\De} {\Delta}
\newcommand{\no} {\nonumber}
\newcommand{\var} {\varepsilon}
\newcommand{\R}{\mathbb R}
\newcommand{\N}{\mathbb N}
\begin{document}

\title[]{New entire positive solution for the nonlinear Schr\"{o}dinger equation: Coexistence of fronts and bumps}

\author{Sanjiban Santra, Juncheng Wei }

\address{S. Santra, School of Mathematics and Statistics, The University of Sydney, NSW 2006, Australia.}
\email{sanjiban.santra@sydney.edu.au}
\address{J. Wei, Department of Mathematics, The Chinese University of Hong Kong, Shatin, Hong Kong.}
\email{wei@math.cuhk.edu.hk}

\subjclass{Primary 35J10, 35J65}

\keywords{positive solutions, front, spike, infinite dimensional reduction, Toeplitz matrix.}

\begin{abstract}
In this paper we construct a new kind of positive solutions of $$\De
u-u+u^{p}=0  \text{ on } \R^2$$ when $p> 2.$ These solutions have the following asymptotic behavior
$$\ \displaystyle{u(x,z)\sim \om(x-f(z))+
\sum_{i=1}^{\infty}\om_{0}((x, z)-\xi_i\vec{e}_{1})}$$
 as $L\rightarrow +\infty$ where $\om$ is a unique positive homoclinic
solution of $\om''-\om+\om^{p}=0$ in $\R$ ; $\om_{0}$ is the two
dimensional positive solution and $\vec{e}_{1}= (1, 0)$ and
$\xi_{j}$ are points such that $\xi_{j}= jL+ \mathcal{O}(1)$ for all
$j\geq 1.$ This represents a first result on the {\em coexistence}
of fronts and bumps. Geometrically, our new solutions correspond to
{\em triunduloid} in the theory of CMC surface.
\end{abstract}
\maketitle
\section{Introduction}
\subsection{Entire Solutions}
Positive entire solutions of \be\label{1}\De u-u+u^{p}=0  \text{ on } \R^N\ee where $1<p< (\frac{N+2}{N-2})_{+},$
 vanishing at infinity have been studied in many contexts. This class of
problems arises in plasma and condensed-matter physics. For
example, if one simulates the interaction-effect among many particles
by introducing a nonlinear term, we obtain a nonlinear
Schr\"odinger equation,
$$-i \frac{\pa \psi}{\pa t}= \De _{x} \psi- \psi+|\psi|^{p-1} \psi$$ where $i$ is an imaginary unit and
$p>1.$ Making an {\it Ansatz}
$$\psi(x,t)=exp(-i t) u(x)$$
one finds that a stationary wave $u$ satisfies (\ref{1}) (\cite{K}). \\

In recent years, much attention has been devoted to the study of existence and multiplicity of  positive solutions of
\be\no \var^2\De u- V(x)u+u^{p}=0;~~  u\in H^{1}(\R^N)\ee
as $\var\rightarrow 0.$ Floer--Weinstien \cite{FW}  constructed
single spike solutions concentrating around any given non-degenerate critical point
of the potential $V$ in  $\R$ provided $\inf_{\R} V>0$, using Lyapunov-Schmidt reduction. This was later extended by
Oh \cite{OG}, \cite{OG1} for the higher dimensional case. \\
Spike layered solutions (solutions concentrating in zero dimensional
sets) in bounded domain $\Om$ with Dirichlet and Neumann boundary
condition have been studied in recent years by many authors. See for
example, Ni-Wei \cite{NW}, Lin--Ni--Wei\cite{LWW}, and the review
articles by Ni \cite{Nisurvey} and Wei \cite{Wei}.
Higher-dimensional concentration is later on studied by
Malchiodi-Montenegro \cite{MM1}-\cite{MM2} in the Neumann case and
by del Pino- Kowalczyk-Wei \cite{DKW1} in $\R^2.$ \\

In this paper, we focus on positive solutions to (\ref{1}). The
solution to (\ref{1}) that is decaying at $\infty$  is
well-understood: all such solutions are radially symmetric around
some point (Gidas-Ni-Nirenberg \cite{GNN}), and are unique modulo
translations (Kwong \cite{K}).  Though solutions of (\ref{1}) are
bounded (since $p<(\frac{N+2}{N-2})_{+}$), not much is known about
the solutions which does not decay at infinity \cite{PQS}. One
obvious solution of such kind is the following: if we consider a
solution $W_{N-1}$ of (\ref{1}) in $\R^{N-1}$ which decays at
infinity, it  induces a solution  in $\R^{N}$ which depends on $N-1$
variables and decays at infinity except for one direction. In the
case $N=2,$ consider solutions $u(x, z)$ to problem (\ref{1}) which
are even in $z$ and vanish at $|x|\rightarrow \infty,$ \be u(x,
z)=u(x, -z) ~~~\forall (x,z)\in \R^2\ee  and \be
\lim_{|x|\rightarrow \infty} u(x, z)=0 ~~~\forall z\in \R.\ee In
\cite{END}, Dancer used local bifurcation arguments to obtain a
class of solutions which constitute a one parameter family of
solutions that are periodic in the $z$ variable and originate from
$\om$,  where $\om$ is the unique positive solution of \be\label{a1}
\om''-\om+\om^{p}=0, \om>0, \om (x)= \om (-x) \text{ in } \R; \,\,  \om\in H^{1}(\R).
\ee
These solutions are called {\em Dancer's solutions}.
They can be parameterized by a small parameter $\delta >0$ and
asymptotically
\begin{equation}
\omega_\delta (x, z)= \omega (x) + \delta \omega^{\frac{p+1}{2}}  (x) \cos (\sqrt{\lambda_1} z) + \mathcal{O}(e^{-|x|}).
\end{equation}

In a seminal paper \cite{M},   Malchiodi constructed a new kind of solutions  with three rays of bumps. More precisely, the solutions constructed in \cite{M} have the form
\begin{equation}
u(x, z) \approx
 \sum_{j=1}^3 \sum_{i=1}^{+\infty} \omega_0 ( (x,z)- i L \vec{l}_j)
\end{equation}
where $ \vec{l}_j, j=1,2,3$ are three unit vectors satisfying some balancing conditions (${\bf Y}$-shaped solutions, see Figure 1). Here $\omega_0$ is the unique solution to the two dimensional entire problem
\begin{equation}
\label{ground2}
\left\{ \begin{array}{l}
\Delta \omega_0-\omega_0 + \omega_0^p=0, \omega_0>0, \ \\
\omega_0 \in H^1 (\R^2). \\
\end{array}
\right.
\end{equation}

\begin{figure}[h] \label{Figure 1}
\centerline{\includegraphics[width=8cm]{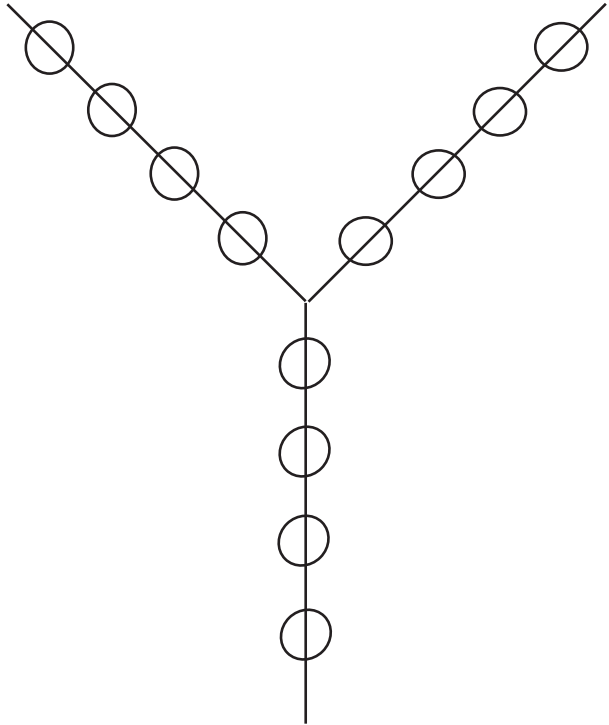}}
\caption{ Multi-bump  solutions with $Y$ shape.}
\end{figure}

On the other hand, in \cite{DKPW1}, del Pino, Kowalczyk, Pacard and Wei constructed another new kind of multi-front solutions using Dancer's solutions and Toda system. (These are solutions with even number of ends. See Figure 2.)  More precisely, the solutions constructed in \cite{DKPW1} have the form
\begin{equation}
u(x, z)\approx \sum_{j=1}^K w_{\delta_j} (x-f_j (z), z)
\end{equation}
where $f_1<f_2 <...< f_K$ satisfies the following Toda system
\begin{equation}
c_0 f_j^{''}= e^{f_{j-1}-f_j}- e^{f_j-f_{j+1}}, f_0=-\infty, f_{K+1}=+\infty, c_0>0.
\end{equation}

\begin{figure}[h] \label{Figure 2}
\centerline{\includegraphics[width=8cm]{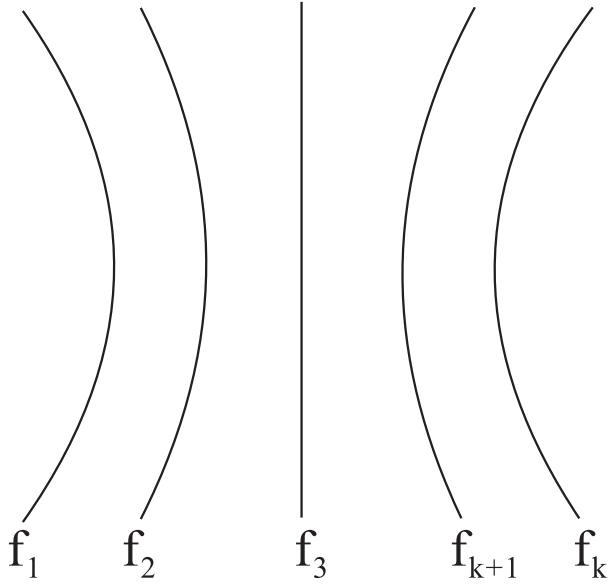}}
\caption{Multi-front  solutions with even-ends.}
\end{figure}

From now on, we call the one-dimensional solution $\omega$ as a ``front'' solution and the two-dimensional solution $\omega_0$ as a ``bump'' solution. Thus results of \cite{DKPW1} and \cite{M} establishes the existence of multi-front and multi-bump solutions respectively.

\subsection{Main Results} In this paper we consider the nonlinear Schr\"odinger equation
\be\label{2}\De u-u+u_{+}^{p}=0  \text{ in } \R^2\ee where $p> 2$ and $u_{\pm}=\max\{\pm u, 0\}$.  Our aim is to  construct solutions with {\em both fronts} and {\em bumps.} More precisely we look for
positive solutions of the form
\begin{equation}\label{w2} u_{\sharp}(x,z)= \om (x-f(z))+ \sum_{i=1}^{\infty}\om_{0}((x, z)-\xi_i\vec{e}_{1})\end{equation}
for suitable large $L>0$ and $\xi_i$'s are such that $\xi_{1}- f(0)= L$ and
$$\xi_1<\xi_2< \cdots <\xi_i< \cdots$$ and satisfy \be\label{w5}  \xi_{j}= j L+ \mathcal{O}(1) \ee for all $j\geq 1;$ $\om$ is the unique even  solution to (\ref{a1}),  $\om_{0}$ is the unique positive solution of (\ref{ground2}) and  $\vec{e}_{1}=(1, 0).$ Along the line of the proof we will replace $u_{+}$ by $u.$

Because of the interaction between the front and the bumps, we are led to considering the following second order ODE:
 \begin{equation}
  \label{a2}
\left\{\begin{aligned}
       f''(z) &= \Psi_L (f, z)  &&\text{in } \R\\
      f(0) &= 0, \ \       f'(0) = 0,
    \end{aligned}
  \right.
\end{equation}
where $ \Psi_L (f, z)$ is a function measuring the interactions between bumps and fronts which will be defined in Section 2. Asymptotically $ \Psi_L  (f,z) \sim  ((f-L)^2+z^2)^{-\frac{1}{2}} e^{-\sqrt{(f-L)^2 +z^2} }$. Let $ \alpha= \int_0^{+\infty} \Psi (\sqrt{L^2+z^2}) dz$.

The following is the main result of this paper.

\begin{theorem}\label{1.1} Let $N=2$. For $p>2$ and sufficiently large $L>0$, (\ref{2}) admits a  one parameter family of positive solutions satisfying
\begin{equation}
  \label{a4}
\left\{\begin{aligned} u_{L}(x, z)&= u_{L}(x, -z) &&\text{for all } (x,z)\in \R^2 \\ u_{L}(x,z)&= \bigg(\om_{\delta} (x-f(z)-h_L(z),z)+
 &&\sum_{i=1}^{\infty}\om_{0}((x, z)-\xi_i\vec{e}_{1})\bigg)(1+o_{L}(1))\end{aligned}
  \right.
\end{equation}
where $\delta=\delta_L$ is a small constant, $\omega_\delta$ is the
Dancer's solution, $f$ is the unique solution of (\ref{a2}),
$\xi_{j}$ satisfy (\ref{w5}) and $o_{L}(1)\rightarrow 0$ as
$L\rightarrow +\infty$, and the function $\|h_{L}\|_{C^{2,
\mu}_{\theta}(\R)\oplus \mathcal{E}}\leq C \alpha^{1+\gamma} $ for
some constant $\theta>0, \gamma>0.$ ( $\mathcal{E}$ will be defined
at Section 2.)  Moreover, the solution has three ends.
\end{theorem}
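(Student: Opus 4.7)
The plan is to implement an infinite-dimensional Lyapunov--Schmidt reduction in which the parameters are: the amplitude $\delta=\delta_L$ of Dancer's periodic solution $\omega_\delta$, the even front profile $f:\R\to\R$, the bump centres $\{\xi_j\}_{j\geq 1}$ along the positive $x$-axis, and a residual correction $h_L$ in the space $C^{2,\mu}_\theta(\R)\oplus \mathcal{E}$. As a first step I would fix notation for the approximate solution $u_\sharp$ of the form (\ref{w2}), with $\omega$ replaced by $\omega_\delta$ and its axis of symmetry shifted to $x=f(z)+h_L(z)$. Since $\omega_0$ decays like $r^{-1/2}e^{-r}$ at infinity, the interactions among bumps are governed only by nearest neighbours to leading order, and the interaction between the bump chain and the front is encoded in the function $\Psi_L(f,z)$ defined in Section~2. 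A direct computation of the error $S(u_\sharp)=\Delta u_\sharp-u_\sharp+u_\sharp^p$ in suitable weighted H\"older norms then yields a bound of order $\alpha^{1+\gamma}$ away from the solitons, with the leading interaction terms isolated explicitly.

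The second step is the linear theory. Around each bump $\omega_0(\cdot-\xi_j\vec{e}_1)$ the linearised operator has a two-dimensional kernel spanned by $\partial_{x_1}\omega_0$ and $\partial_{x_2}\omega_0$; around the translated front $\omega_\delta(\cdot-f-h_L,\cdot)$ the Jacobi operator has a one-parameter kernel generated by translations of $\omega_\delta$ (modulo the symmetry $z\mapsto -z$). Introducing $L^2$-orthogonal projections onto these kernels and working in appropriately weighted function spaces, I would establish a uniform-in-$L$ invertibility estimate for the full linearised operator modulo its kernel. Because the semi-infinite bump configuration is almost translation-invariant, the discrete part of the inverse is controlled by a tridiagonal Toeplitz-type operator whose spectral analysis is the keyword referenced in the abstract; this yields an inverse with polynomial weight growth that still absorbs the $\mathcal{O}(\alpha^{1+\gamma})$ source.

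Having solved the linearised equation modulo kernels, a contraction mapping in a ball of radius $C\alpha^{1+\gamma}$ delivers a unique small perturbation $\phi$ such that $u_\sharp+\phi$ solves the original PDE up to a Lagrange-multiplier combination of kernel elements. The problem then reduces to two coupled \emph{reduced equations}: a continuous one obtained by testing the remainder against the front mode, which produces the ODE (\ref{a2}) for $f$ together with an algebraic constraint determining $\delta_L$; and a discrete infinite system obtained by testing against each bump's translation modes, which fixes the positions $\xi_j$ via a nearest-neighbour balance law. The evenness hypothesis on $u_L$ restricts everything to $z\geq 0$ and forces $f'(0)=0$, so the Cauchy data in (\ref{a2}) are compatible with the rest of the construction; matching the front to the first bump yields $\xi_1-f(0)=L$.

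The hardest step, I expect, is the joint resolution of the continuous ODE for $f$ and the semi-infinite discrete system for $\{\xi_j\}$: the front equation determines $f(0)$, whose value feeds the first balance equation for $\xi_1$, which then propagates through the Toeplitz cascade to all subsequent $\xi_j$. Showing that this cascade admits a solution with $\xi_j=jL+\mathcal{O}(1)$ requires sharp exponential asymptotics of $\Psi_L$ together with uniform bounds on the Toeplitz inverse as the chain length tends to infinity. Once this is accomplished, the claimed bound on $\|h_L\|_{C^{2,\mu}_\theta(\R)\oplus\mathcal{E}}$ follows from the fixed-point identity, and the three-ended topology of $u_L$ reads off from its asymptotic shape: the Dancer front provides two ends as $z\to\pm\infty$, and the infinite bump chain provides the third end along the positive $x$-axis.
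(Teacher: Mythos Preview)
Your outline tracks the paper's infinite-dimensional Lyapunov--Schmidt scheme closely, but there is one substantive gap in the linear theory near the front. You describe the Jacobi operator around $\omega_\delta$ as having ``a one-parameter kernel generated by translations of $\omega_\delta$''. In fact the bounded kernel of the limiting operator $L_0=\partial_{\mathrm{x}}^2+\partial_{\mathrm{z}}^2-1+p\,\omega^{p-1}$ is three-dimensional: besides $\omega'(\mathrm{x})$ it contains $Z(\mathrm{x})\cos(\sqrt{\lambda_1}\,\mathrm{z})$ and $Z(\mathrm{x})\sin(\sqrt{\lambda_1}\,\mathrm{z})$, where $Z$ is the positive ground-state eigenfunction of $-\partial_{\mathrm{x}}^2+1-p\,\omega^{p-1}$ with eigenvalue $-\lambda_1<0$ (precisely the direction along which Dancer's branch bifurcates). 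Even symmetry in $z$ kills the sine mode, but the $Z$-mode survives, so the projections you impose must include $Z$, not only $\omega'$. The paper accordingly enlarges the approximate solution by an extra term $e(\mathrm{z})Z(\mathrm{x})$ and obtains \emph{three} reduced objects near the front rather than one: the ODE for $h+v$ from projection onto $\omega'$; a resonant equation $e''+\lambda_1 e=\mathcal{R}$ from projection onto $Z$; and the scalar solvability condition $\int_0^\infty\mathcal{R}\cos(\sqrt{\lambda_1}\mathrm{z})\,d\mathrm{z}=0$ for that resonant equation, which is what actually determines $\delta$. If you project only onto $\omega'$, the linear inverse is not uniformly bounded as $L\to\infty$ and the contraction cannot close.

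Two smaller points. First, the raw error $S(u_\sharp)$ is $\mathcal{O}(\alpha)$ in the weighted norm, not $\mathcal{O}(\alpha^{1+\gamma})$; the extra power of $\alpha$ appears only \emph{after} the leading interaction terms are cancelled against the reduced equations, so the order in which you quantify things matters. Second, the paper does not finish with a single Banach contraction: after a contraction determines $(h,e,\boldsymbol{\chi})$ as \emph{continuous} (not Lipschitz) functions of the finite-dimensional parameters $(v,\delta)\in\mathcal{E}\times\R$, it applies a Brouwer-type fixed-point argument on that finite-dimensional piece. Your sketch would need the same splitting, since Lipschitz dependence on $v$ and $\delta$ is never established.
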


Figure 3 shows graphically how the solution constructed in Theorem
\ref{1.1} looks like triunduloid type I. (This corresponds end-to-end gluing construction. See discussions at the end.)

\begin{figure}[h] \label{Figure 3}
\centerline{\includegraphics[width=8cm]{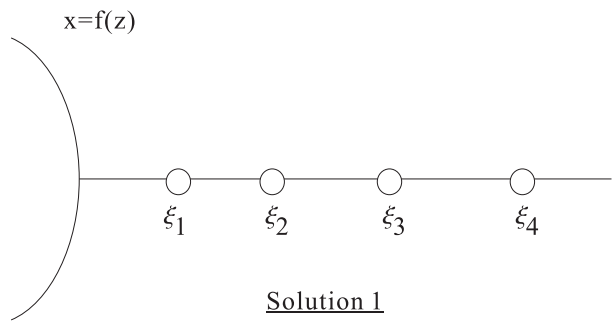}} \caption{
Triunduloid Type I}
\end{figure}

A modification of our technique can be used to construct  the
following two new types of solutions: the first one is a combination
of positive front and infinitely many negative bumps--we call it
Solution 2 (triunduloid type II). The second one is a combination of two fronts
and one bump (or finitely many bumps)--we call it Solution 3.

\vskip 0.3in
\begin{figure}[h]\label{Figure 4}
\centerline{\includegraphics[width=8cm]{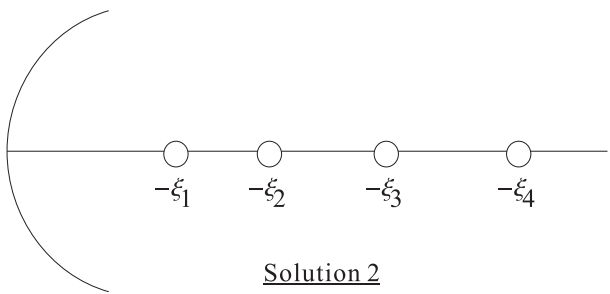}} \caption{
Triunduloid Type II}
\end{figure}
\vskip 0.3in
\begin{figure}[h]\label{Figure 5}
\centerline{\includegraphics[width=8cm]{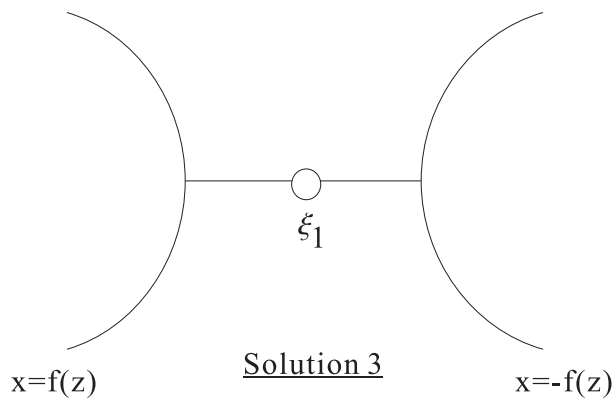}}
\caption{End to end gluing construction}
\end{figure}

In this paper we will only discuss the proofs of Solution 1. The modifications needed for Solution 2 and Solution 3 will be explained at the last section.

Theorem \ref{1.1} implies that we can construct solutions which does
not decay along the $x-$ axis but decay everywhere else. Though Theorem
\ref{1.1} is a purely PDE result, this result has an analogy in the
theory of constant mean curvature (CMC) surface in $\R^3$ which we
shall describe below.

\subsection{Relation with CMC Theory}
  CMC surfaces in $\R^3$ are an equilibria for the area functional subjected to an
enclosed volume constraint. To explain mathematically, suppose an oriented surface $\mathcal{S}$ is embedded in a manifold $M$ and let us denote $\nu_{}$ be the normal field compatible with the orientation. Then for any function $z$  which is smooth small function we define a {\em perturbed surface} $\mathcal{S}_{z}$ as the normal graph of the function of $z$ over $\mathcal{S}.$ Namely $\mathcal{S}_{z}$ is parameterized as
$$p\in \mathcal{S}\mapsto exp(w(p)\nu(p))$$
where $exp$ is the exponential map in $(M, g).$
Decompose $z$ into the positive part  and the negative part of $z$ as $z=z^{+}- z^{-}$ and define the set
$$B_{z^{\pm}}:=\{exp_{p}{t\nu(p)}: \pm t\in (0, z^{\pm}(p))\}.$$
Then the $m$-th volume functional
$$\mathcal{A}(z)= \int_{S_{z^{}}} d vol_{\mathcal{S}_{z}}$$
and its first and second variations at $z=0$ are
$$D \mathcal{A}(0)(v)=\int_{\mathcal{S}}\text{H} v~ d vol_{\mathcal{S}_{}}$$
$$D^2\mathcal{A}(0)(v, v )=\int_{\mathcal{S}}(|\nabla_{g} v|^2- (\kappa_{1}^2+ \kappa_{2}^2+ \cdots \kappa_{m}^2)v^2- Ric(\nu, \nu)v^2+ \text{H}^2 v^2 ) d vol_{\mathcal{S}_{}}$$
where $\kappa_{i}$ are the principal curvatures of $\mathcal{S}$ , $Ric$ denotes the Ricci tensor on $(M, g)$ and $\text{H}$ is the mean curvature function and depends on $\mathcal{S}.$ Also note that the critical points of $\mathcal{A}$ are precisely surfaces of mean curvature zero and usually referred to as minimal surfaces.
Moreover, define $(m+1)$ th volume functional
$$\mathcal{V}(z) :=\int_{B_{z^{+}}} d vol_{M} - \int_{B_{z^{-}}} d vol_{ M}$$
where volumes are counted positively when $w > 0$ and negatively when $w < 0$. The first
variation of $\mathcal{V}$ is given by
$$D \mathcal{V}(0)(v) = \int_{\mathcal{S}}
v dvol_{\mathcal{S}}$$
and its second variation is given by
$$D^2\mathcal{V}(0)(v, v) = - \int_{\mathcal{S}}\text{H} v^2 dvol_{\mathcal{S}}.$$
Define the {\em shape} operator as \be\no |A|^2= \sum_{i=1}^{m}
\kappa_{i}^2.\ee We see that critical points of the functional
$\mathcal{A}$ with respect to some volume constraint $\mathcal{V}$ =
constant have constant mean curvature. Here the mean curvature
appears as a multiple of the Lagrange multiplier associated to the
constraint (and hence it is constant). The surfaces with constant
mean curvature equal to $\text{H}=\lambda$ are critical points of
$\mathcal{W}(\mathcal{S}) := \mathcal{A}(\mathcal{S}) + \lambda
\mathcal{V}(\mathcal{S}).$ The quadratic form can be written as
$$D^2\mathcal{W}(0)(v, v)=- \int_{\mathcal{S}} v \mathcal{J}_{\mathcal{S}}  v ~dvol_{\mathcal{S}}$$
where the {\em Jacobi} operator is given by
\be\label{bh1} \mathcal{J}_{\mathcal{S}}=\De_{\mathcal{S}}+ |A|^2+ Ric_{g}(\nu, \nu).\ee

For CMC surfaces the sign of $\text{H}$ and its value can be changed
by a reversal of orientation and homothety respectively  and as a
result we can normalize the surface such that $\text{H}\equiv1.$ CMC
interfaces arise in many physical  and variational problems. Over
the past two decades there is a great deal of progress in
understanding complete CMC and their {\em moduli} spaces. Moduli  is
a notion to identify invariant surfaces. In order to study the
structure of moduli spaces one needs to study the properties of
(\ref{bh1}). The reflection technique of Alexandrov  \cite{A} shows
that spheres is the only compact embedded CMC surface of {\em
finite} topology. These are surfaces homeomorphic to a compact
surface $\mathcal{S}$ of genus $g$ with a finite number of points
removed from it say $m$. The neighborhood of each of these punctures
are called {\em ends}. Mathematically, we define the ends $e_{j}$ of
an embedded surface $\mathcal{S}$ in $\R^3$ with finite topology to
be a non-compact connected components of the surface near infinity
$$\mathcal{S}\cap (\R^{3}\setminus B_{R_{0}}(0))= \displaystyle{\cup_{j=1}^{m}}e_{j}$$ where $B_{R_{0}}(0)$ denotes a ball of radius $R_{0}$ (is chosen sufficiently large so that $m$ is constant for all $R> R_{0}$). Note that sphere is a zero end surface.

The theory of properly embedded  CMC surfaces, was classified by Delaunay \cite{CD}. These are  rotationally symmetric CMC
surfaces, called {\em unduloids} (having genus zero and two ends). To describe these, consider the cylindrical
graph \be \label{g0} (t, \theta)\mapsto (h(t)\cos \theta, h(t)\sin \theta, t).\ee The CMC graph is an ordinary differential
equation given by,
\begin{equation}
  \label{cyl}
\left\{\begin{aligned}
    h_{tt}-\frac{1}{h}(1+ h_{t}^{2})+ (1+ h_{t}^{2})^{\frac{3}{2}}&=0\\
      \min_{t}h(t)&= \var.
    \end{aligned}
  \right.
\end{equation}
Moreover, all the positive solutions of (\ref{cyl}) are periodic and may be distinguished by their minimum value $\var\in (0, 1],$ which is more often
referred to as the Delaunay parameter of the surface $D_{\tau}$ where $\tau= 2\var- \var^2$. Moreover, when $\tau=1$, $D_{1}$ is a cylinder of radius $1$ and as $\tau
\downarrow 0,$ $D_{\tau}$ converges to an infinite array of mutually tangent spheres of radius $2$ with centers
along the $z$ axis. The family $D_{\tau}$ interpolates between two extremes and $\var$ measures the size of the neck region. Moreover,
using a parameterization (\ref{g0}) and \be t=k(s), h(t)= \tau e^{\sigma(s)},\ee
we obtain the Jacobi operator for the surface $D_{\tau}$ is given by
\be\label{bh2} \mathcal{J}_{D}=\frac{1}{2\tau^2 e^{2 \sigma}}(\pa_{s}^2+ \pa_{\theta}^2+ \tau^2 \cosh 2\sigma)\ee
where $\sigma''+ \frac{\tau^2}{2}\sinh 2 \sigma=0$ and $k'=\frac{\tau^2}{2}(e^{2\sigma}+1).$

These surfaces are periodic and interpolate between the unit cylinder and the singular surfaces formed by a string of
spheres of radius 2, each tangent to the next along a fixed axis. In particular, Delaunay established that every CMC surface of revolution is
necessarily one of these ``Delaunay surfaces''.
Kapouleas \cite{Kap} constructed numerous examples of
complete embedded  CMC surface in $\R^3$ (with genus $g\geq2$ and ends $k\geq 3$) by gluing Delaunay
surfaces onto spheres. In fact he produced CMC surfaces using suitably {\em balanced} simplicial graphs where the $k $ edges are rays tending to infinity. By balancing condition we mean that the force vectors associated with each edge cancel at each vertex.
In fact balancing condition combined with spherical trigonometry plays an important role in classifying CMC surfaces with three ends.
A more flexible gluing techniques was used by Mazzeo and Pacard in \cite{MP} to explore  moduli surface theory which involves
several boundary value problems and then matching the boundary values across the interface.\\
A CMC surface $\mathcal{S}$ of finite topology is {\em Alexandrov-embedded}; if $\mathcal{S}$ is properly immersed,
and if each end  of $\mathcal{S}$ is embedded; there exist a compact manifold $M$ with boundary of dimension three
 and a proper immersion $F: M \setminus\{q_{1}, q_{2}, \cdots, q_{m}\}\rightarrow \R^3$ such that
 $F\mid_{\pa M \setminus\{q_{1}, q_{2}, \cdots, q_{m}\}}$ parameterizes $M.$ Moreover, the mean curvature
normal of $\mathcal{S}$ points into $M.$\\
Then we define {\em triunduloid} as an Alexandrov embedded CMC
surface  having zero genus and three ends. Triunduloids are a basic
building block  for Alexandrov embedded CMC surface with any number
of ends.  Nonexistence of one end Alexandrov embedded CMC surface
was proved by Meeks \cite{Me}.
 Kapouleas \cite{Kap}, G-Brauckmann \cite{G} and Mazzeo-Pacard \cite{MP}  established existence of  triunduloid
with small necksize or high symmetry. In fact G-Brauckmann \cite{G}  used conjugate surface theory construction to obtain families of symmetric
embedded complete CMC surfaces. The geometry of moduli space plays an very important role for the understanding of the structure of CMC's.

The  main aim of this paper is to prove existence of {\em
triunduloid } type of solution for (\ref{1.1}) in $\R^2$ i.e. a
solution having three ends.  Solutions having even number of ends have been
shown to exist in a recent paper of del Pino, Kowalczyk, Pacard and
Wei, see \cite{DKPW1}. \textbf{Y} shaped solutions of (\ref{1}) in
$\R^3$ were constructed
by Malchiodi \cite{M}. Hence Theorem \ref{1.1} proves that the moduli  space $\mathcal{M}_{3}(\R^2)$ of all $3-end$ solutions is nonempty.\\

Geometrically, solutions constructed in Theorem \ref{1.1} correspond to the so-called {\em end-to-end} gluing in CMC. (We are indebted to Prof. F. Pacard for this connection.)  The end-to-end gluing in CMC corresponds to adding a handle to a multi-end CMC surfaces. The procedure has been used in the thesis of J. Ratzkin \cite{ratzin1}. (A similar construction has been done for the construction of positive metrics with constant positive scalar curvature \cite{ratzin2}.) For nonlinear Schrodinger equation, adding a handle means adding a half-ray solution with infinitely many bumps. The solution in Theorem \ref{1.1} represents  first step in adding a handle. We believe that with more work it is possible to add handles to the even number ends solutions constructed in \cite{DKPW1}.

Finally we should also mention that in a recent paper \cite{MPW}, Musso, Pacard and Wei have constructed {\em nonradial finite-energy sign-changing solutions}, using geometric analogue constructions of Kapouleas \cite{Kap}.

\subsection{Main ideas of proof.}
We sketch the main ideas of the proofs of Theorem \ref{1.1}. The solutions we construct have the form
\begin{equation}
  \label{a4-200}
u(x, z)\sim \om_{\delta} (x-f(z),z)+
 \sum_{i=1}^{\infty}\om_{0}((x, z)-\xi_i\vec{e}_{1}).
\end{equation}

 There are three main parts of the proof: firstly, we add a half-line of bumps (corresponds to $ \sum_{i=1}^{\infty}\om_{0}((x, z)-\xi_i\vec{e}_{1})$).  For this part we use the idea of Malchiodi \cite{M}. Namely we need to use Dancer's solutions with large periods and analyze the interactions using Toeplitz matrix.  Secondly, we have a front solution  (corresponds to $ \om_\delta (x-f(z), z)$). This is a two-end solution and we follow the analysis by del Pino, Kowalczyk, Pacard and Wei \cite{DKPW1}. The third part deals with the {\em interaction part}. Because of the exponentially decaying tails of both $\om_\delta$ and $\om_0$, the dominating force is given by the interaction between the first bump and the front only. We have to compute the corresponding ODE which ultimately determines the curve $f(z)$.  In all these three parts, we will make use of the {\em infinite-dimensional Liapunov-Schmidt reduction method}. For this method, we refer to \cite{DKPW1}, \cite{DKPW2},  \cite{DKW1}, \cite{DKW2}.

\section{The Exponential equation, Toeplitz matrix and it linearisation}

\subsection{The differential equation involving $f$}\label{1.3}

In this paper the second order ODE (\ref{a2}) plays an important role. We shall study the properties of this ODE and identify the scaling parameter.

First let us define the function $ \Psi$: let $ \omega$ be the one-dimensional solution and $\omega_0$ be the two-dimensional solution. $\Psi$ measures the interactions between $\omega$ and $\omega_0$ and is defined by
\begin{equation}
\Psi_L (f, z)= p \int_{\R} \omega^{p-1} (x) \omega_x (x) \omega_0 ( \sqrt{ (x+f)^2+z^2}) dx
\end{equation}
Asymptotically
\begin{equation}
\Psi_L (f,z)\sim  (f^2+z^2)^{-\frac{1}{2}} e^{-\sqrt{f^2+z^2}}.
\end{equation}
We also note that
\begin{equation}
\frac{\partial \Psi_L (f,z)}{\partial f} <0, \frac{\partial \Psi_L (f,z)}{\partial z} <0.
\end{equation}

Let $L>>1$ be a fixed large number. We choose the following small parameter
\begin{equation}
\label{alphachoice}
 \al=e^{-\frac{L}{\sqrt{2}}}
\end{equation}
then $\al \rightarrow 0$ as $L \rightarrow \infty.$

For any $0\leq \mu<1$ we define $C^{l, \mu}_{\theta}(\R)$ to be the space of all real-valued functions where $$\|f\|_{C_{\theta}^{l,\mu}(\R)}= \|(\cosh z)^{\theta}f \|_{C^{l, \mu}(\R)}<+\infty.$$ We will fix $\mu$ later. Since  $f''\geq 0$, $f'$ is an increasing function. Also note as $f$ is even, it is enough to study the  behavior of $f$ when $z>0.$ After a translation, (\ref{a2}) becomes
\begin{equation}
  \label{a2s}
\left\{\begin{aligned}
      f''(z) &= \Psi_L (f, z) &&\text{in } \R\\
      f(0) &= L && \\
      f'(0) &= 0.
    \end{aligned}
  \right.
\end{equation}

It is easy to see that (\ref{a2}) admits a global bounded solution which is also increasing. We claim the following result: there exists $C_1>0, a_1>0$ such that
\begin{equation}
\label{f1n}
f(z)= L+C_1+ \al a_1 z + \mathcal{O} (\al e^{-\frac{ z}{\sqrt{2}}}),
\end{equation}
\begin{equation}
\label{fz1}
f_z (z)=  \al a_1  + \mathcal{O} (\al e^{-\frac{ |z|}{\sqrt{2}}}),
\end{equation}
\begin{equation}
\label{fzz3}
f_{zz} (z)=  \mathcal{O} (\al e^{-\frac{|z|}{\sqrt{2}}}).
\end{equation}
Since $f^{'} \geq 0$, it is easy to see that (\ref{f1n})-(\ref{fz1})
is a consequence of (\ref{fzz3}). We just need to establish
(\ref{fzz3}).  To this end, we note that for all $z\in \R$ we have $
\sqrt{L^2+ |z|^2} \geq \frac{1}{\sqrt{2}} L +
\frac{1}{\sqrt{2}}|z|.$ Because of our choice of $\alpha$ at
(\ref{alphachoice}), we have $e^{-\sqrt{L^2+ z^2}}\leq \al
e^{-\frac{1}{\sqrt{2}}|z|}.$ This implies that \be\label{z10} f_{zz}
= \mathcal{O}(\al e^{-\frac{1}{\sqrt{2}} |z|})\ee which proves
(\ref{fzz3}).

\subsection{Bounded solvability of (\ref{a2}) on  $\R$}
In this section we study the linearized operator of (\ref{a2s}), around a solution $f$ of (\ref{a2s}). Let $g$ be an even continuous, bounded function. Consider the following linear equation
\begin{equation}
  \label{lin} \mathcal{Q} (\psi):=\psi''-\frac{\partial \Psi_L}{\partial f} \psi= g ~~~~\text{in } \R\\
\end{equation}
We analyze the solvability of the linear problem in $\psi\in C^{2, \mu}_{\theta}(\R)$, given $g\in C^{0, \mu}_{\theta}(\R).$

Note that asymptotically we have
\begin{equation}
-\frac{\partial \Psi_L}{\partial f} \sim \frac{f}{\sqrt{f^2+z^2}} e^{-\sqrt{f^2+z^2}}
\end{equation}
\begin{remark}\label{lin3}
For the homogeneous equation,  there are two fundamental solutions $\psi_1$ and $\psi_2$ satisfying
\begin{equation}
  \label{lin1}
\left\{\begin{aligned}
     \psi_1'' -\frac{\partial \Psi_L}{\partial f} \psi_1&= 0 &&\text{in } \R\\
    \psi_1(0) &= 0 &&\text{ }\\
     \psi_1'(0) &= 1,
    \end{aligned}
  \right.
\end{equation}
\begin{equation}
  \label{lin2}
\left\{\begin{aligned}
     \psi_2''- \frac{\partial \Psi_L}{\partial f} \psi_2&= 0 &&\text{in } \R\\
    \psi_2(0) &= 1 &&\text{ }\\
     \psi_2'(0) &= 0.
    \end{aligned}
  \right.
\end{equation}

Note that $\psi_1$ is odd while $\psi_2$ is  even.  We now claim that $ \psi_1^{'} (+\infty) \not = 0$. In fact, suppose $ \psi_1^{'} (+\infty)=0$. Since $ f_z$ satisfies
\begin{equation}
  \label{lin1-1}
\left\{\begin{aligned}
     f_z''-\frac{\partial \Psi_L}{\partial f} f_z&= \frac{\partial \Psi_L}{\partial z} <0 &&\text{in } \R\\
    f_z(0) &= 0 &&\text{ }\\
     f_{zz} (0) &= \Psi_L (0, 0),
    \end{aligned}
  \right.
\end{equation}
 and $f_z>0$, we see that by the Maximum Principle $\psi_1 >0$. Then if $\psi_1^{'} (+\infty)=0$, then we have $ \int_0^{+\infty} \frac{\partial \Psi_L}{\partial z}  \psi=0$ which is impossible.

Thus $\psi_1 $ grows like $ c z$ as $+\infty$. This implies that $\psi_2$ must be a constant at $+\infty$.

We define the one dimensional space called the {\em deficiency
subspace} $\mathcal{E}=\{\chi \psi_{1} \}$  and $\chi$ is a  smooth
cut off function such that
\begin{equation}
  \label{lin0}
 \chi(z)=\left\{\begin{aligned}
     1 &&\text{if } z>1 \\
    0 &&\text{if } z<0.
    \end{aligned}
  \right.
\end{equation}

Moreover,  we define the norm on $C^{2, \mu}_{a}(\R)\oplus \mathcal{E}$ to be such that \be \no \|(\psi, c \chi \psi_1) \|_{C^{2, \mu}_{a}(\R)\oplus \mathcal{E}}= \|\psi\|_{C^{2, \mu}_{a}(\R)}+ |c|.\ee
\end{remark}

\begin{lemma}\label{2.2} [Linear Decomposition Lemma]
Let $f$ be the unique solution of (\ref{a2s}). The mapping
$$\mathcal{Q}: C^{2, \mu}_{\theta}(\R)\oplus \mathcal{E} \rightarrow C^{0, \mu}_{\theta}(\R) $$
$$\psi\mapsto \psi''-\frac{\partial \Psi_L}{\partial f} \psi$$
is an isomorphism.
\end{lemma}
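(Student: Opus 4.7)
The plan is to establish the isomorphism by building an explicit inverse via variation of parameters, using the two fundamental solutions $\psi_1,\psi_2$ from Remark \ref{lin3}. Since the potential $V(z):=-\partial \Psi_L/\partial f$ is positive, continuous, and exponentially decaying (as $V(z)\lesssim e^{-\sqrt{f^2+z^2}}$ and $f\to\infty$ linearly), outside a large ball the equation behaves like $\psi''=0$, so every homogeneous solution has the asymptotic form $a+bz$ at $\pm\infty$. By Remark \ref{lin3}, $\psi_1$ is odd with slope $c_+\neq 0$ at infinity, while $\psi_2$ is even and bounded with $\psi_2(\pm\infty)=d_+$.

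For injectivity, suppose $\mathcal{Q}(\psi+c\chi\psi_1)=0$ with $\psi\in C^{2,\mu}_\theta(\R)$. Then $u=\psi+c\chi\psi_1$ is a global homogeneous solution, hence $u=A\psi_1+B\psi_2$ for constants $A,B$. Matching asymptotic behavior at $-\infty$ (where $\chi\equiv 0$ and $\psi$ decays exponentially) forces $Ac_+=0$ and $Bd_+=0$, i.e.\ $A=B=0$, and then $c=0$ and $\psi\equiv 0$.

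For surjectivity, given even $g\in C^{0,\mu}_\theta(\R)$, I will seek an even solution of the form $u=A\psi_2+u_p$ with the particular solution
\[
u_p(z)=\psi_1(z)\,I(z)-\psi_2(z)\,J(z),\qquad I(z)=\int_0^z\tfrac{\psi_2 g}{W}\,ds,\ J(z)=\int_0^z\tfrac{\psi_1 g}{W}\,ds,
\]
where $W$ is the (constant) Wronskian. The parities of $\psi_1,\psi_2,g$ make $u_p$ even, and the exponential decay of $g$ combined with the linear growth of $\psi_1$ shows $I(\infty)=I_\infty$ and $J(\infty)=J_\infty$ both converge. At $+\infty$,
\[
u(z)\sim (A-J_\infty)\,d_+ + c_+I_\infty\,z.
\]
Choosing $A=J_\infty$ kills the bounded constant and leaves a piece $I_\infty\psi_1(z)$ in the deficiency subspace, so I set $c=I_\infty$ and $\psi=u-c\chi\psi_1$. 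Writing
\[
u_p(z)-I_\infty\psi_1(z)+J_\infty\psi_2(z)=\psi_1(z)(I(z)-I_\infty)-\psi_2(z)(J(z)-J_\infty),
\]
I estimate the tails $I-I_\infty$ and $J-J_\infty$ by the exponentially decaying integrals of $\psi_2 g$ and $\psi_1 g$, which decay like $e^{-\theta z}$ and $ze^{-\theta z}$ respectively. Multiplied by the polynomial growth of $\psi_1,\psi_2$ this yields decay $\lesssim z^2 e^{-\theta z}$, which lies in $C^{2,\mu}_{\theta'}(\R)$ for any $\theta'<\theta$, so after fixing $\theta$ strictly less than the decay rate of $V$ (equivalently, strictly less than $1/\sqrt{2}$, consistent with the estimates in Section 2.1), $\psi$ lies in $C^{2,\mu}_\theta(\R)$ with the desired norm bound. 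Schauder regularity on bounded intervals, together with these tail estimates, gives $\|\psi\|_{C^{2,\mu}_\theta}+|c|\leq C\|g\|_{C^{0,\mu}_\theta}$, which by the open mapping theorem (or directly) establishes that $\mathcal{Q}$ is an isomorphism.

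The main obstacle I expect is bookkeeping the weighted-norm estimate: because $\psi_1$ grows linearly, a naive bound loses a factor of $z$, so one must absorb the factor into the weight by keeping $\theta$ slightly below the decay rate dictated by $V$ (ultimately $1/\sqrt{2}$ from \eqref{z10}). A secondary subtlety is verifying that imposing evenness does not reduce the image: for even forcing the odd fundamental solution $\psi_1$ appears only through the deficiency mode, so the even–odd decomposition is compatible with the splitting $C^{2,\mu}_\theta(\R)\oplus\mathcal{E}$ and the construction above captures every even right-hand side.
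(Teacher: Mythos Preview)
Your proposal is correct and follows essentially the same route as the paper: both arguments invert $\mathcal{Q}$ by variation of parameters using the fundamental pair $\psi_1,\psi_2$ from Remark~\ref{lin3}, and both extract the linearly growing piece as the coefficient in the deficiency space $\mathcal{E}$. The paper writes the right inverse directly as $\mathcal{R}(g)=\psi_1(z)\int_0^z\psi_2 g+\psi_2(z)\int_z^{+\infty}\psi_1 g$ (already with the decaying tail built in) and then splits off $\bigl(\int_0^\infty\psi_2 g\bigr)\chi\psi_1$, whereas you first form $u_p$ with two $\int_0^z$ integrals, add $A\psi_2$ to kill the bounded constant, and then subtract $I_\infty\,\chi\psi_1$; these are the same decomposition up to the harmless choice of integration limits. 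Your treatment is in fact slightly more thorough than the paper's: you supply the injectivity argument (which the paper omits) and you flag the polynomial loss in the weighted tail estimate, which the paper sweeps under ``Clearly we have''.
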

\begin{proof}
Let $\|g\|_{C^{0, \mu}_{\theta}(\R)}< +\infty.$ Then it is easy to see that by the method of variation of constants the following function
\begin{equation}
\psi = \mathcal{R} (g)=\psi_1 (z) \int_0^z \psi_2 g + \psi_2 (z) \int_z^{+\infty} \psi_1 g
\end{equation}
is a solution to
\begin{equation}
\psi\mapsto \psi''-\frac{\partial \Psi_L}{\partial f} \psi=g, \psi^{'} (0)=0
\end{equation}

We claim that $\psi= \mathcal{R}(g)\in C^{2, \mu}_{\theta}(\R)\oplus \mathcal{E}.$
In fact, we simply write
\begin{eqnarray}
 \mathcal{R} (g) &=& \mathcal{R}_1 (g) + \mathcal{R}_2 (g) \chi \psi_1 \nonumber \\
  &= & \psi_1 (z) (1-\chi) \int_0^z \psi_2 g -\chi \psi_1 \int_z^{+\infty} \psi_2 g + \psi_2 (z) \int_z^{+\infty} \psi_1 g - \psi_1 (z) \no\\
 & & +\int_0^{+\infty} \psi_2 g \chi \psi_1 (z)
\end{eqnarray}
where $ \mathcal{R}_2 (g)= \int_0^{+\infty} \psi_2 g$.

Clearly we have
\begin{equation}
\| \mathcal{R}_1 (g) \|_{C^{2,\mu}_\theta (\R)} \leq C \| g \|_{C^{0, \mu}_\theta (\R)}, \  \ | \mathcal{R}_2 (g) | \leq C \| g \|_{C^{0, \mu}_\theta (\R)}
\end{equation}

\end{proof}

\begin{remark} Moreover, the space $\mathcal{E}$ can also be described as a {\em parameter space} for the linear problem $\mathcal{Q},$ since the elements are potentially occurring parameters for the {\em Jacobi field} that is those elements $\psi$ such that $\mathcal{Q}(\psi)=0.$
\end{remark}

\subsection{Solvability of another differential equation}
In an analogous way we look for even solutions of
\be\label{reso} e''+ \lambda_1 e= k(z)\ee
where $k$ is even with $\|k (\cosh z)^{\theta}\|_{C^{0, \mu}(\R)}< +\infty.$ We are interested in solution which decays to zero at $+\infty.$
Since (\ref{reso}) is a resonance problem, we impose the following orthogonality condition \be\label{resor}\int_{0}^{\infty} k(z) \cos(\sqrt{\lambda_1} z) dz =0\ee to prove existence and  uniqueness of solutions. Using the method of variation of parameters the solution of (\ref{reso}) can be written as $\mathcal{S}(k)=e$ where
\bea \label{6.34} \mathcal{S}(k)&=& \frac{1}{\sqrt{\lambda_1}}\sin(\sqrt{\lambda_1} z)\int_{z}^{\infty} k(t)\cos(\sqrt{\lambda_1} t) dt\no\\&-&\frac{1}{\sqrt{\lambda_1}}\cos(\sqrt{\lambda_1} z) \int_{z}^{\infty} k(t)\sin(\sqrt{\lambda_1} t) dt\eea
Furthermore, we have
\be \label{6.35} \|e (\cosh z)^{\theta}\|_{C^{2, \mu}(\R)} \leq C< +\infty.\ee
\subsection{Location of the spikes}
Let $\boldsymbol{\xi}= (\xi_{1}, \xi_{2} \cdots, \cdots)$ be a sequence of points satisfying
\be\label{spi}\xi_{2}= 2\xi_{1}+\mathcal{O}(1)\ee and for all $j\geq 2$ \be\label{spi1}\xi_{j+1}- \xi_{j} = \xi_{j}- \xi_{j-1}.\ee
Then we obtain for all $j\geq 1$
\be\label{spi2} \xi_{j}= j L+ \mathcal{O}(1).\ee

\subsection{Invertibility of the operator associated with the Toeplitz matrix}
Let $\boldsymbol{\xi}= (\xi_{i})_{i\geq 1}.$
We define an operator $T: \R^{\infty}\rightarrow \R^{\infty}$ such that
$T=(T(\xi_{i}))_{j}$ where
\begin{equation}
  \label{todm}
(T(\xi_{i}))_{j}=\left\{\begin{aligned}
        2 \xi_{j} &&\text{if }& j=i\\
         -\xi_{j} &&\text{if }& j= i\pm 1\\
     0 &&\text{ otherwise.}
         \end{aligned}
  \right.
\end{equation}
Our main goal is given $\boldsymbol{\chi}= (\chi_{1}, \cdots, \chi_{j}\cdots )$ we want to solve $T(\boldsymbol{\xi}) =  \boldsymbol{\chi}.$
Using the fact that (\ref{a1}) we define a weighted norm $\boldsymbol{\xi}=(\xi_{i})_{i=1}^{\infty}$ by
$$\|\boldsymbol{\xi}\|_{\al}=\|(\xi_1, \xi_2, \xi_3, \cdots )\|_{\al} = \max_{i}\al^{-i} |\xi_{i}|.$$
Let $$\Om= \{ \boldsymbol{\xi}=(\xi_{1}, \xi_{2}, \cdots \xi_{i}, \cdots): \|\boldsymbol{\xi}\|_{\al}<+\infty \}.$$
\begin{lemma} The operator $T$ has an inverse in $\Om$, whose norm is $\mathcal{O}(\al).$
\end{lemma}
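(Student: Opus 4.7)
The plan is to reduce the weighted-norm invertibility of $T$ to a contraction-mapping argument in $\ell^{\infty}(\N)$ by conjugating away the exponential weight. Set $\eta_j = \al^{-j}\xi_j$ and $\gamma_j = \al^{-j}\chi_j$; this is an isometry between $(\Om,\|\cdot\|_{\al})$ and $(\ell^{\infty}(\N),\|\cdot\|_{\infty})$. After dividing the $j$-th equation of $T\boldsymbol{\xi} = \boldsymbol{\chi}$ by $\al^{j-1}$, the system transforms to
$$-\eta_{j-1} + 2\al\,\eta_j - \al^{2}\,\eta_{j+1} = \al\,\gamma_j, \qquad j\geq 1,$$
with the convention $\eta_0 := 0$ arising from the first row of $T$. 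The key structural observation is that every term on the left-hand side other than $-\eta_{j-1}$ carries a factor of at least $\al$, while the right-hand side is itself of order $\al$. Heuristically this forces $\|\boldsymbol{\eta}\|_{\infty} = \mathcal{O}(\al)\|\boldsymbol{\gamma}\|_{\infty}$, which is precisely the content of $\|T^{-1}\|_{\Om} = \mathcal{O}(\al)$.

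To make this rigorous I would first solve a sequence of finite truncations $T_N\boldsymbol{\xi}^{(N)} = \boldsymbol{\chi}^{(N)}$ on $\{1,\ldots,N\}$ with Dirichlet data $\xi^{(N)}_0 = \xi^{(N)}_{N+1} = 0$. Each $T_N$ is symmetric and positive-definite, hence uniquely invertible, and reading the transformed equation as the backward recursion $\eta_{j-1} = \al\bigl(2\eta_j - \al\,\eta_{j+1} - \gamma_j\bigr)$ exhibits it as a contraction with Lipschitz constant $2\al + \al^{2} \ll 1$. Standard fixed-point estimates then produce the uniform bound $\|\boldsymbol{\xi}^{(N)}\|_{\al} \leq C\al\,\|\boldsymbol{\chi}\|_{\al}$ independently of $N$. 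A diagonal extraction yields a limit $\boldsymbol{\xi} \in \Om$ satisfying $T\boldsymbol{\xi} = \boldsymbol{\chi}$ with the required norm bound; uniqueness follows because the homogeneous equation $T\boldsymbol{\eta} = 0$ admits only affine solutions $A + Bj$, none of which lies in $\Om$.

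The main obstacle I anticipate is extracting the uniform bound from the truncated problem. The usual spectral information $\lambda_{\min}(T_N) \sim N^{-2}$ produces an $N$-dependent constant that is useless in the limit, so the weighted norm must be used essentially to beat it. Concretely, the finite Green's function $G_N(j,k) = \min(j,k)\bigl(N+1-\max(j,k)\bigr)/(N+1)$ grows without bound as $N\to\infty$; the mechanism that saves us is that when tested against an exponentially decaying $\chi_k = \mathcal{O}(\al^{k})$, the convolution $\sum_k G_N(j,k)\chi_k$ itself decays like $\al^{j+1}$ with a prefactor of order $\al$, uniformly in $N$. Establishing this cancellation cleanly---equivalently, showing that initialising the backward recursion at $j=N+1$ with $\eta^{(N)}_{N+1}=0$ propagates the decay all the way down to $j=1$ in a way compatible with the left Dirichlet condition $\eta_0=0$---is the technical heart of the argument, and is precisely where the matching between the weight $\al^{-j}$ in the norm and the exponent already fixed in $(\ref{alphachoice})$ becomes essential.
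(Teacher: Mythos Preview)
Your approach contains a genuine gap. The uniform bound $\|\boldsymbol{\xi}^{(N)}\|_{\al} \leq C\al\,\|\boldsymbol{\chi}\|_{\al}$ for the two-sided Dirichlet truncation $\xi_0^{(N)}=\xi_{N+1}^{(N)}=0$ is false. The truncated Green's function $G_N(j,k)=\min(j,k)\bigl(N+1-\max(j,k)\bigr)/(N+1)$ converges to $\min(j,k)$ as $N\to\infty$, and for the test sequence $\chi_k=\al^{k}$ the contribution to $\xi_j^{(N)}$ coming from indices $k<j$ is
\[
\frac{N+1-j}{N+1}\sum_{k=1}^{j-1}k\,\al^{k}\ \longrightarrow\ \frac{\al}{(1-\al)^{2}}\qquad (j\to\infty,\ j\ll N),
\]
a nonzero constant independent of $j$. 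Hence $\al^{-j}|\xi_j^{(N)}|\to\infty$ and $\|\boldsymbol{\xi}^{(N)}\|_{\al}\to\infty$ as $N\to\infty$. The ``mechanism that saves us'' you describe---that $\sum_k G_N(j,k)\chi_k$ decays like $\al^{j+1}$---therefore does not operate: the Dirichlet truncation selects, in the limit, the bounded but \emph{non}-decaying solution $\xi_j=\sum_k\min(j,k)\chi_k$ of the infinite system, not the element of $\Om$.

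The paper bypasses truncation entirely. It writes down the inverse explicitly,
\[
\xi_j=\sum_{k\geq j}(k-j)\,\chi_k,
\]
and checks in one line that $\al^{-j}|\xi_j|\leq\|\boldsymbol{\chi}\|_{\al}\sum_{m\geq 0}m\,\al^{m}=\al(1-\al)^{-2}\|\boldsymbol{\chi}\|_{\al}$. The decisive structural point is that this Green's function is supported on $\{k\geq j\}$ only: it encodes decay at $+\infty$, not a left boundary condition, and it is precisely this one-sidedness that manufactures the factor $\al^{j}$. Your backward recursion in the $\eta$-variables, initialised with zero tail $\eta_{N+1}=\eta_{N+2}=0$ and \emph{without} forcing $\eta_0=0$, would in fact reproduce this formula and the $\mathcal{O}(\al)$ bound; the attempt to make it compatible with the left Dirichlet condition is exactly what destroys the estimate.
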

\begin{proof} For any  $\|\boldsymbol{\chi}\|_{\al}<+\infty$, we define
$$\xi_{j}= \sum_{k=j}^{\infty} (k-j) \chi_{k}.$$ Let $I$ denote the operator defined by the above expression. Then $I$ is an operator inverse of $T.$ Clearly we have
\be |\xi_{j}|\leq \|\boldsymbol{\chi}\|_{\al} \sum_{k=j}^{\infty} (k-j) \al^{k+1}\leq C \al^{(j+1)} \|\boldsymbol{\chi}\|_{\al} \ee
This implies \be \no \al^{-j}|\xi_{j}|\leq C \al \|\boldsymbol{\chi}\|_{\al} \ee
\be \no  \|\boldsymbol{\xi}\|_{\al}\leq  C \al\|\boldsymbol{\chi}\|_{\al}.\ee Note that $C$ is independent of $\al.$
\end{proof}

\subsection{Idea of the construction} We are actually looking for bump line solution of (\ref{2}) whose asymptotic behavior is determined by the curve
$$\gamma= \{(x, z): x= f(z)\},$$
which asymptotically behaves as straight lines having negative exponential growth in the second order. Then it turns out that $f$ satisfies a second order differential equation, given by (\ref{a2}). Moreover, by (\ref{1.3}) we have  $f(z)= \beta+ \al a_1 |z|+  \mathcal{O}_{C^{\infty}}(\al (\cosh z)^{-\frac{1}{\sqrt{2}}})$
for some $\beta>0.$ Define $\theta=\frac{1}{\sqrt{2}}.$ Also note that the solution of (\ref{a2}) is unique and since $f(z), f(-z)$ are
solutions to (\ref{a2}) we must have $f(z)= f(-z)$ for all $z\in \R.$
Let $Z$ be a positive eigenfunction of
\be\label{h2} \varphi_{xx}+ (p\om^{p-1}-1) \varphi=\lambda_{1}\varphi\ee
corresponding to the principal eigenvalue $\lambda_1$ where explicitly $$Z(x)=\frac{\om^{\frac{p+1}{2}}(x)}{\int_{\R} \om^{p+1 }dx};~~~~~~~\lambda_{1}=\frac{1}{4}(p+3) (p-1)$$ and in particular, the asymptotic behavior of $\om$ and $Z$ at infinity are given by
$$\om(x)\sim e^{-|x|}+ \mathcal{O}_{C^{\infty}(\R)} (e^{-2|x|}) $$ and
$$Z(x)\sim e^{-\frac{p+1}{2}|x|}+ \mathcal{O}_{C^{\infty}(\R)} (e^{-(p+1)|x|}).$$
Consider the {\em Dancer's solution} of (\ref{2}) as $\om_{\de} (x,z)$
$$\om_{\de} (x,z)= \om(x)+ \de Z(x) \cos (\sqrt{\lambda_1} z) + \mathcal{O}(\de^2) e^{-|x|}$$ where $ |\de|$ is sufficiently small.

\subsection{Modified Fermi coordinates near the bump line}
Let $f$ be a solution of (\ref{a2}). We choose $v\in \mathcal{E}$ such that
\be \label{2.16} v=c \chi \psi_1, |c| \leq \al^{1+k_{1}}\ee
where $k_{1}$ is a small number to be chosen later. Now we define a model bump curve  as
$$\overline{\gamma}= \{\textbf{x}= (x, z)\in \R^2: x= \overline{f} (z)= f(z)+ v(z)\}$$ where $f$ is the solution of (\ref{a2}).
Then we define the local coordinate as a vector tuple $(T, N)$ where unit tangent $$T=\frac{1}{\sqrt{1+ ( \overline{f}')^2}} ( \overline{f}', 1)$$
and the unit normal to the curve $$N=\frac{1}{\sqrt{1+ ( \overline{f}')^2}} (1, -  \overline{f}').$$ Let $\text{z}$  be the arc length defined as $$\text{z}= \int_{0}^{z}\sqrt{1+ (\overline{f}'( s))^2 }ds $$ which is an increasing function of $z$
and let $q(\text{z})$ be the corresponding arc length parameter. Note that $q(\text{z})\in \R^2.$
It turns out that the asymptotic behavior of the bump line at infinity is not exactly linear but has an exponentially small correction. This correction needs to be determined and in fact this is the key step in the paper which involves the linearized operator discussed in Remark \ref{lin3}. To describe this small perturbation we consider a fixed function $h$  \be \label{2.17} \|h\|_{C^{2, \mu}_{\theta}(\R)}\leq \al^{1+k_{2}}\ee for some $k_{2}>0$ small.

A neighborhood of the curve $\overline{\gamma} $ can be parametrised in the following way
\be \label{w14}{\textbf{x}}= X (\text{x}, \text{z})= q (\text{z})+ (\text{x}+ h( \text{z}))N (\text{z})\ee
where $t= \text{x}+ h( \text{z})$ is the {\em signed } distance to the curve $\overline{\gamma}.$
Define a set
$$V_{\zeta}=\{\textbf{x}=(x, z): |x|\leq \zeta \sqrt{1+ z^2}\}$$
for small $\zeta.$ In fact the Fermi coordinates of the curve is  defined as long as the map $(t, \text{z}) \mapsto {\textbf{x}}$ is one-one. The  asymptotic behavior of the curvature of  $\overline{\gamma}_{}$ as $|\text{z}|\rightarrow +\infty$ is given by
$$\kappa (\text{z})\sim \al^{}(\cosh \text{z})^{ -\theta}.$$ Furthermore, we can show that for $\zeta$ and $\al$ sufficiently small the Fermi coordinates are well defined around $\overline{\gamma}(\text{z})$ as long as
\be |t|\leq \zeta \sqrt{1+ \text{z}^2}.\ee
Also we have
\be \textbf{x}\in V_{\zeta} ~~\Rightarrow |\text{x}|= |t- h( \text{z})|\leq \zeta \sqrt{1+ \text{z}^2}\ee
where $\textbf{x}= X(\text{x}, \text{z}).$
Moreover, we define
\be X^{\star} f(\text{x}, \text{z})= f ~~\text{o}~~X (\text{x}, \text{z}).\ee
Furthermore,  we have
\be\label{imes} x= \text{x}(1+ \mathcal{O}(\al^2))+ \text{z}\mathcal{O}(\al)\ee and
\be \label{imes1} z= (1+ \mathcal{O}(\al^{2}))\text{z}.\ee
\subsection{Laplacian in the shifted coordinates}
The curvature $\kappa $ of the curve $\bar{\gamma} $ which is given by
\be \kappa =\frac{ \bar{f}''(z)}{(1+  (\bar{f}'( z)))^{\frac{3}{2}}}.\ee
We define $A$ by
\be\no A:=1- (\text{x}+h)\kappa.\ee
Then the Laplacian in terms of the new coordinates reduces to
\be\label{lap} \De= \frac{1}{A}\bigg\{\pa_{\text{x}}\bigg(\frac{A^2+ (h')^2}{A}\pa_{\text{x}}\bigg)- \pa_{\text{z}}\bigg(\frac{ h'}{A}\pa_{\text{x}}\bigg)- \pa_{\text{x}}\bigg(\frac{ h'}{A}\pa_{\text{z}}\bigg)+ \pa_{\text{z}}\bigg(\frac{1}{A}\pa_{\text{z}}\bigg)\bigg\}.\ee
Then (\ref{lap}) can be written as
\be\label{lap1}  \De =\pa _{\text{x}}^2+  \pa _{\text{z}}^2+ a_{11}\pa _{\text{x}}^2+ a_{12}\pa _{\text{x}} \pa _{\text{z}}+ a_{22}\pa_{\text{z}}^2 + b_1 \pa _{\text{x}}+ b_{2} \pa_{\text{z}}^2\ee
where
\be\no a_{11}=\frac{ (h')^2}{A^2}, a_{12}=\frac{2  h'}{A^2}, a_{22}=\frac{1-A^2}{A^2} \ee
\be \label{b11} b_{1}= \frac{1}{A^3}(-\kappa A^2- h''A+  (h')^2 \kappa-  (\text{x}+h)h' \kappa)\ee and
\be \label{b2-1} b_{2}= \frac{1}{A^3}((h+ \text{x})\kappa).\ee
Note that here we have
$$\kappa= \mathcal{O}_{C^{2, \mu}_{ \theta}(\R)}(\al), \kappa'= \mathcal{O}_{C^{2, \mu}_{  \theta}(\R)}(\al^2)$$
and consequently we have
\begin{equation}
  \label{2.43}
\left\{\begin{aligned} a_{11}=  \mathcal{O}_{C^{0, \mu}_{  \theta}(\R)}(\al^2) , a_{12}=  \mathcal{O}_{C^{0, \mu}_{\theta}(\R)}(\al),  a_{22}=  \mathcal{O}_{C^{0, \mu}_{ \theta}(\R)}(\al(1+ |\text{x}|)) \\
     b_{1}= \mathcal{O}_{C^{0, \mu}_{\theta}(\R)}(\al(1+|\text{x}|)), b_{2}=  \mathcal{O}_{C^{0, \mu}_{\theta}(\R)}(\al(1+|\text{x}|)).
    \end{aligned}
  \right.
\end{equation}
\subsection{Approximate solution} In this section we develop the approximate solution. Firstly we take a {Dancer} solution and the homoclinic solution. These two solutions need to be glued together by some cut-off function. In this way the amplitude and the phase shifts of the ends do not change but instead remain fixed. To achieve an extra degree of freedom a function whose local form is given by $e(\text{z})Z(\text{x})$ is added to our approximation. \\
Precisely, we consider $e\in C^{2, \mu}_{\theta} (\R)$ such that \be \label{2.18} \|e\|_{C^{2, \mu}_{\theta}(\R)}\leq C \al^{2+ k_{3}}\ee
where $k_3$ will be chosen later. In addition, we will use a real parameter $\de_{}$  such that
\be\label{2.19} |\de|\leq  \al^{1+ k_{4}}.\ee
We define the following notations
\bea X_{}^{\star} \om_{\de}(\text{x}, \text{z})&=& \om_{\de_{}}(\text{x}, \text{z})\no\\ X_{}^{\star} \om(\text{x}, \text{z})&=& \om(\text{x}) \\ X_{}^{\star} Z (\text{x}, \text{z})&=& Z(\text{x})\no \eea
where $\om$ is the homoclinic solution, $\om_{\de}$ and $Z$ being the Dancer solution and the principle eigenfunction of (\ref{h2}) respectively.
Now we choose $\Xi_{}$ and $\Xi_{0}$ be nonnegative even cut-off function such that
$$\Xi_{}(t)+  \Xi_{0}(t) =1; ~~~~~~~~~\forall ~~~~~t \in \R$$ with
$$supp~ \Xi_{}=(-\infty, -1) \cup (1, +\infty), supp~\Xi_{0}=(-2, 2).$$

Also let $$X^{\star}\Xi_{}(\text{x}, \text{z})= \Xi_{}( \text{z}), X_{}^{\star}\Xi_{0}(\text{x}, \text{z})= \Xi_{0}(\text{z}).$$
Now we introduce
$\text{w}= \Xi_{}\om_{}+ \Xi_{0}\om .$ Let $\boldsymbol{\chi}$ be such that $$\|\boldsymbol{\chi}\|_{\al}\leq C \al^{k_{5}}$$ where $k_{5}$ is a small positive number. Define
$$\om_{j}(x,z)= \om_{0}(x-\xi_{j}-\chi_{j},z) \text{ and } \om_{j, x}(x,z)= \om_{0,x}(x-\xi_{j}-\chi_{j},z).$$
We thereby define the approximate solution  of (\ref{1}) in $V_{\zeta}$ as
\be \label{ap0} \bar{w}(\hat{\text{x}})=\text{w}+ e( \text{z})Z.\ee
 Now we intend to define a global approximation. Let $\eta_{\zeta}$ be a smooth cutoff function such that $supp~\eta_{\zeta}\subset V_{\zeta}$ such that $\eta\equiv 1$ in $V_{\frac{\zeta}{2}}$ and $\boldsymbol{\xi}$ satisfying (\ref{spi2}), then we define the global approximation as
\bea \label{ap} \textbf{w}&=& \eta_{\zeta}(\text{w}+ e(\text{z})Z)+ \sum_{j=1}^{\infty}\om_{j}(x,z) \no \\&=& \eta_{\zeta}\bar{w}+ \sum_{j=1}^{\infty}\om_{j}(x,z).\eea
Notice that ${\bf w}$ depends on $f, v, h, \de, \boldsymbol{\chi} .$

\subsection{The key estimates} In this section we precisely derive some key estimates concerning the interaction of spikes and the interaction of the front with the spike.
First note that $\om_{0}$  is radial and the asymptotic behavior of $\om_{0}$ at infinity is given by
$$\lim_{r\rightarrow \infty}e^{r} r^{\frac{1}{2}} \om_{0}(r)= A_{0}>0;~~~~  \text{and } \lim_{r\rightarrow \infty} \frac{\om_{0}'(r)}{\om_{0}(r)}=-1.$$
We have the following key estimates:
Let $\hat{x}= (x, z).$  Let $\vec{e}_{1}=(0,1),$ then  we have
\begin{eqnarray}\label{ke1} \int_{\R^2} \om_{0}^{p}(\hat{x})\om_{0}'(|\hat{x}+ L \vec{e}_1|) \frac{{x+L }}{|\hat{x}+ L\vec{e}_1|}d\hat{x}&=&- A_{0}\int_{\R^2} |\hat{x}+L \vec{e}_{1}|^{-\frac{1}{2}} e^{-|\hat{x}+ L\vec{e}|}\frac{x+ L}{|\hat{x}+ L\vec{e}_{1}|} \om_{0}^{p}(\hat{x})\no\\&=&- A_{0}\int_{\R^2} |\hat{x}+L \vec{e}_{1}|^{-\frac{1}{2}} e^{-|\hat{x}+ L\vec{e}_{1}|}\frac{x+ L}{|\hat{x}+ L\vec{e}_{1}|} \om_{0}^{p}(\hat{x})\no\\&=&- A_{0}\int_{\R^2} L^{-\frac{1}{2}}e^{-L} e^{L-|x+ L\vec{e}_{1}|}\frac{x+ L}{|\hat{x}+ L\vec{e}_{1}|} \om_{0}^{p}(\hat{x})\no\\&=&
- A_{0}L^{-\frac{1}{2}} e^{-L}\int_{\R^2} \om_{0}^{p} (\hat{x})e^{-L} e^{L-|\hat{x}+ L\vec{e}_{1}|}\frac{x+ L}{|\hat{x}+ L\vec{e}_{1}|}\no\\&=& -\om_{0}({L})(1+ O(L^{-1})) \int_{\R^2} e^{- |x|}\om_{0}^{p}(\hat{x}) d\hat{x}\no\\&=& -\gamma_{0}\om_{0}(L)\bigg(1 + \mathcal{O}\bigg(\log \frac{1}{\al}\bigg)^{-1}\bigg)\end{eqnarray}
where $\gamma_{0}= \int_{\R^2} e^{- |x|}\om_{0}^{p}(\hat{x}) d\hat{x}.$  In this case we consider $L$ to be either $\xi_{j+1}- \xi_{j}$ or $\xi_{j}- \xi_{j-1}$ where $j\geq 2.$ Similarly we can show there exists $\gamma_1>0$ such that
\be \label{ke3} \int_{\R^2} \om^{p}(x)\om_{0}'(|\hat{x}+ L \vec{e}_1|) \frac{{x+L }}{|\hat{x}+ L\vec{e}_1|}d\hat{x}=-\gamma_1 e^{-|\xi_{1}- f(0)|}\bigg(1 + \mathcal{O}\bigg(\log \frac{1}{\al}\bigg)^{-1}\bigg).\ee

\section{Proof of Theorem \ref{1.1}}
Let $\eta$ and $\eta_{j}$ be smooth cut-off function such that \begin{equation}
  \label{rho1}
\eta(s)=\left\{\begin{aligned}
        1 &&\text{if }& |s|\leq \frac{3}{4} \log\frac{1}{\al}\\
        0 &&\text{if }& |s|> \frac{7}{8 } \log \frac{1}{\al}\\
         \end{aligned}
  \right.
\end{equation} and
\begin{equation}
  \label{rho}
\eta_{j}(s, t)=\left\{\begin{aligned}
        1 &&\text{if }& |(s, t)-\xi_{j}\vec{e}_{1}| \leq \frac{3}{4} \log \frac{1}{\al}\\
        0 &&\text{if }& |(s, t)-\xi_{j}\vec{e}_{1}|  > \frac{7}{8} \log \frac{1}{\al}.
         \end{aligned}
  \right.
\end{equation}
Define $X^{\star}_{} \eta= \eta({\text{x}})$ and $X^{\star} \om'=\om'(\text{x}).$
We are looking for solutions of (\ref{1}) of the form $u= \textbf{w}+\varphi$ where $\varphi$ is a {\em small} perturbation of $\textbf{w}.$ Substituting the value of $u$ in (\ref{1}), we obtain
\be\label{g1} \De (\textbf{w}+\varphi)- (\textbf{w}+\varphi)+ (\textbf{w}+\varphi)^{p}=0\ee
where $\textbf{w}=\textbf{w}(\al, v, h, e, \de, \boldsymbol{\chi} )$ for some $\varphi\in C^{2, \mu}_{\sigma, \theta }(\R^2)\oplus C^{1}_{\sigma}(\R^2)$ and $v\in \mathcal{E}.$
We can formally write (\ref{g1}) as $$\mathcal{L}(\varphi)+ S(\textbf{w}) + N(\varphi)=0 \text{ in } \R^2$$
where
$$\mathcal{L}:=\De -1+ p \textbf{w}^{p-1}$$ and
$$N(\varphi):= (\textbf{w}+\varphi)^p- \textbf{w}^p- p\textbf{w}^{p-1} \varphi$$ with
$$S(\textbf{w}):= \De \textbf{w}- \textbf{w}+ \textbf{w}^{p}.$$
Hence we should write (\ref{g1}) as a fixed point problem for the nonlinear function
$$\varphi+ \mathcal{L}^{-1} (S(\textbf{w}) + N(\varphi))=0$$
provided $\mathcal{L}^{-1}$ is a suitable bounded operator. But $\mathcal{L}$ will have in general an unbounded inverse as $L\rightarrow +\infty.$
Also note that near the bump line the operator
$L_{0}= \pa_{x}^2+ \pa_{z}^2- p\om^{p-1}+ 1$ which has a bounded kernel spanned by $\om', Z(x)\cos\sqrt{\lambda_{1}}z$ and  $Z(x)\sin\sqrt{\lambda_{1}}z$ and near a spike the kernel of $L_{1}= \De-1+ p\om_{0}^{p-1}$ is spanned by $\om_{0, x}.$
To get rid of this difficulty we consider a {\em nonlinear projected problem}
\be\label{nlp}  \mathcal{L}(\varphi)= S(\textbf{w})+ N(\varphi)+ \sum_{j=1}^{\infty} c_{j} \eta_{j}\om_{j, x}+ d(z)\eta \om'+ m(z)\eta Z.\ee
In the following sections we will describe:\\
(1) How to solve (\ref{nlp}) for unknown $\varphi, $ $c= (c_{1}, c_{2}, \cdots \cdots)$, $d, m$ with the given parameters $v, h, e, \de$ and $\boldsymbol{\chi}.$\\
(2) Secondly we have to choose the parameters in such a way that $c,d, m$ are zero.

\section{Linear Theory}
The local structure of $\mathcal{M}_{3}(\R^2)$ near the curve $\gamma$ and the spikes are closely related to the study of 
whether $\mathcal{L}$ is actually injective or not. If $\mathcal{L}$ is not injective, we need to determine its kernel. 
We first study two simplified linear operators
\be \label{z20} L_0(\varphi)= \varphi_{zz}+\varphi_{xx} -\varphi + p\om^{p-1}\varphi\ee and
\be \label{z21} L_{1} (\varphi)=\De \varphi -\varphi + p\om_{0}^{p-1}\varphi\ee
where $\om$ is the unique  solution of (\ref{a1}) and decays exponentially; and $\om_{0}$ is the unique positive
solution  of (\ref{2}).
Note that $\om',$ $Z(x) \cos\sqrt{\lambda_1} z$ and
$Z(x) \sin\sqrt{\lambda_1} z$ are solutions to $L_0(\varphi)=0.$  In Lemma \ref{nd}, we prove that
indeed the converse also hold.

\begin{lemma}\label{nd} Let $\varphi$ be a bounded solution of \begin{equation} L_0(\varphi)=0.\no \end{equation} Then $\varphi \in span\{ \om'(x), Z(x)\cos\sqrt{\lambda_1} z, Z(x)\sin\sqrt{\lambda_1} z\}.$
\end{lemma}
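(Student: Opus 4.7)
The plan is to exploit the translation invariance of $L_0$ in $z$ together with the spectral structure of the one-dimensional operator $\mathcal{L}_x:=\partial_x^2-1+p\omega^{p-1}$. First I would record the spectrum of $\mathcal{L}_x$ on $\mathbb{R}$: since $p\omega^{p-1}\to 0$ at infinity, Weyl's theorem gives $\sigma_{\mathrm{ess}}(\mathcal{L}_x)=(-\infty,-1]$, and in $(-1,\infty)$ the spectrum is discrete with simple eigenvalues. By Sturm--Liouville theory, the nodeless positive function $Z$ is the principal eigenfunction, with largest eigenvalue $\lambda_1$; the function $\omega'$ lies in the kernel and has exactly one zero, so by the oscillation theorem it realizes the second eigenvalue, which is therefore $0$. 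Hence $\sigma(\mathcal{L}_x)\cap[0,\infty)=\{0,\lambda_1\}$ with one-dimensional eigenspaces. Moreover, for any $\mu>-1$ the asymptotic ODE $u''=(1+\mu)u$ forces every bounded solution of $\mathcal{L}_x u=\mu u$ to decay exponentially and hence to be an $L^2$-eigenfunction; this is the key structural ingredient.

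Next, to extract the $\omega'$- and $Z$-components of $\varphi$, I would set
\[
A(z):=\int_{\mathbb{R}}\varphi(x,z)\,\omega'(x)\,dx,\qquad B(z):=\int_{\mathbb{R}}\varphi(x,z)\,Z(x)\,dx,
\]
both convergent and bounded thanks to the exponential decay of $\omega'$ and $Z$. Differentiating under the integral and using self-adjointness of $\mathcal{L}_x$ with $\mathcal{L}_x\omega'=0$ and $\mathcal{L}_xZ=\lambda_1 Z$, the identity $L_0\varphi=0$ yields $A''(z)=0$ and $B''(z)+\lambda_1 B(z)=0$. Boundedness of $\varphi$ forces $A\equiv A_0$ constant (no linear term) and $B(z)=B_1\cos(\sqrt{\lambda_1}\,z)+B_2\sin(\sqrt{\lambda_1}\,z)$. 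Subtracting the associated multiples of $\omega'(x)$, $Z(x)\cos(\sqrt{\lambda_1}\,z)$ and $Z(x)\sin(\sqrt{\lambda_1}\,z)$ from $\varphi$, the problem reduces to showing: any bounded $\tilde\varphi$ solving $L_0\tilde\varphi=0$ on $\mathbb{R}^2$ with $\int_{\mathbb{R}}\tilde\varphi(\cdot,z)\,\omega'=\int_{\mathbb{R}}\tilde\varphi(\cdot,z)\,Z=0$ for every $z\in\mathbb{R}$ must vanish identically.

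The main obstacle is this last step. My plan is a Fourier-in-$z$ argument: viewing $\tilde\varphi\in\mathcal{S}'(\mathbb{R}^2)$ and applying $\mathcal{F}_{z\to\xi}$ to $L_0\tilde\varphi=0$, one obtains the distributional identity $(\mathcal{L}_x-\xi^2)\widehat{\tilde\varphi}=0$. By the first paragraph, the only bounded-in-$x$ solutions occur for $\xi^2\in\{0,\lambda_1\}$ and are proportional to $\omega'$ or $Z$; the orthogonality conditions on $\tilde\varphi$ precisely kill these modes, while $\xi^2<0$ is forbidden because it would generate exponentially growing solutions in $z$, contradicting $\tilde\varphi\in L^\infty$. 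Boundedness also rules out higher-order distributional contributions (derivatives of $\delta$ at the exceptional $\xi$-values), as these would produce polynomial growth in $z$. Inverting the Fourier transform then gives $\tilde\varphi\equiv 0$. The delicate part is the rigorous distributional manipulation; an equivalent, more hands-on route is to use the spectral decomposition of $\mathcal{L}_x$ with the $\omega'$- and $Z$-projections removed, combined with the spectral gap between $0$ and the next eigenvalue, to derive uniform exponential decay of $\tilde\varphi$ in $|x|$ and then in $|z|$ by an Agmon/energy estimate, concluding by a Liouville-type argument.
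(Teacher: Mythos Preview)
Your argument is correct in substance. The paper itself does not give a proof at all: it simply cites Lemma~7.1 of \cite{DKPW1}, so there is no ``paper's own proof'' to compare against beyond that reference. What you have written is essentially the standard proof of that cited lemma: record that $\sigma(\mathcal{L}_x)\cap[0,\infty)=\{0,\lambda_1\}$ with eigenfunctions $\omega'$ and $Z$, peel off the three bounded modes by solving the ODEs for $A(z)$ and $B(z)$, and then kill the remainder using the spectral gap on $\{\omega',Z\}^\perp$.

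Two small remarks. First, in your Fourier discussion the sentence ``$\xi^2<0$ is forbidden\ldots'' is misphrased: after a real Fourier transform $\xi^2\ge0$ automatically; what you mean is that negative eigenvalues $\mu<0$ of $\mathcal{L}_x$ produce only exponentially growing/decaying $z$-profiles $e^{\pm\sqrt{-\mu}\,z}$, hence no bounded contribution. Second, of the two routes you propose, the energy/spectral-gap one is the cleaner way to make the last step rigorous (and is closer to what is actually done in \cite{DKPW1}): first use a barrier to get uniform exponential decay of $\tilde\varphi$ in $|x|$, then set $g(z)=\int_{\mathbb{R}}\tilde\varphi^2\,dx$ and compute $g''(z)=-2\int\tilde\varphi\,\mathcal{L}_x\tilde\varphi\,dx+2\int\tilde\varphi_z^2\,dx\ge 2c\,g(z)$ from the gap, so that $g\ge0$ bounded with $g''\ge 2c\,g$ forces $g\equiv0$. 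The distributional Fourier route is also valid but, as you note, requires more care to justify.
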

\begin{proof}  This follows from Lemma 7.1 of \cite{DKPW1}.
\end{proof}

\begin{lemma}\label{nd1} Let $\varphi$ be a bounded solution of  \begin{equation} L_1(\varphi)=0.\no \end{equation}
satisfying $\varphi(x, z)= \varphi(x, -z).$ Then $\varphi= c\om_{0,x}$ for some $c\in \R.$
\end{lemma}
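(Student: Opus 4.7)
The plan is to invoke the classical non-degeneracy of the ground state $\omega_0$ for the two-dimensional equation (\ref{ground2}) and then cut down the kernel using the evenness hypothesis in $z$. The non-degeneracy statement we need is that the space of bounded solutions of $L_1(\varphi)=0$ in $\R^2$ is exactly two-dimensional, spanned by the translation modes $\omega_{0,x}$ and $\omega_{0,z}$. This is a well-known result going back to Kwong \cite{K} (uniqueness of the radial ground state) combined with the spherical-harmonic decomposition argument, and is used in Oh \cite{OG} and in the references \cite{DKW1}, \cite{DKPW1} cited in the paper, so I would either quote it directly or sketch the standard proof below.

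To sketch the proof, I would pass to polar coordinates centered at the origin of $\omega_0$, writing $\varphi(r,\vartheta)=\sum_{k\geq 0}(a_k(r)\cos k\vartheta+b_k(r)\sin k\vartheta)$. Because $\omega_0$ is radial, the linearized operator $L_1$ preserves each Fourier mode, so each radial coefficient satisfies an ODE of the form
\begin{equation}
a_k'' + \frac{1}{r} a_k' - \left(1 + \frac{k^2}{r^2}\right) a_k + p\,\omega_0^{p-1}(r)\, a_k = 0,
\end{equation}
and similarly for $b_k$, with the requirement that $a_k,b_k$ be bounded on $[0,\infty)$ and regular at $r=0$. For the mode $k=0$, the uniqueness/non-degeneracy result of Kwong \cite{K} implies that no nontrivial bounded solution exists. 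For the mode $k=1$, the function $\omega_0'(r)$ is an explicit bounded solution, and a Wronskian/ODE argument gives that this spans all bounded solutions in this mode. For $k\geq 2$, the additional positive potential $k^2/r^2$ forces the operator to be strictly positive on a suitable weighted space, so there are no nontrivial bounded solutions.

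Hence every bounded solution of $L_1(\varphi)=0$ lies in $\operatorname{span}\{\omega_0'(r)\cos\vartheta,\; \omega_0'(r)\sin\vartheta\} = \operatorname{span}\{\omega_{0,x},\omega_{0,z}\}$. The final step is to apply the symmetry hypothesis $\varphi(x,z)=\varphi(x,-z)$: since $\omega_{0,z}$ is odd in $z$ while $\omega_{0,x}$ is even in $z$, the component along $\omega_{0,z}$ must vanish, leaving $\varphi = c\,\omega_{0,x}$ for some $c\in\R$, as claimed.

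The main obstacle, conceptually, is the spherical-harmonic argument for $k\geq 2$ (establishing strict positivity) and the use of Kwong's uniqueness for $k=0$; but both are entirely standard, and I expect the proof to be written as a citation to \cite{K} or \cite{DKW1} rather than carried out in detail, in parallel with the one-line proof given for Lemma \ref{nd}.
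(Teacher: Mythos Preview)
Your proposal is correct and follows exactly the same approach as the paper: quote the non-degeneracy of $\omega_0$ to identify the bounded kernel of $L_1$ as $\operatorname{span}\{\omega_{0,x},\omega_{0,z}\}$, then use the evenness in $z$ to kill the $\omega_{0,z}$ component. The only difference is bibliographic: the paper cites Ni--Wei \cite{NW} for the kernel characterization rather than Kwong/Oh, and does not include the spherical-harmonic sketch you provide.
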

\begin{proof} Since the kernel of $L_1$ consists of  $\om_{0, x}$ and $\om_{0, z},$ see \cite{NW}, the result follows trivially from the fact that $\varphi$ is even in $z-$ variable.
\end{proof}
By Lemma \ref{nd} and \ref{nd1} we define the orthogonality conditions as \be\label{orth}\int_{\R} \varphi(x,z) \om'(x) dx = 0= \int_{\R} \varphi(x,z) Z(x) dx~~~\forall z\in \R\ee and \be\label{orth1}\int_{\R^2} \varphi(x,z) \om_{0, x}(x, z) dx dz = 0.\ee
\begin{lemma} \label{L} Let $\varphi$ be a bounded solution of \begin{equation}\label{s1} L_0(\varphi)=k\end{equation} satisfying (\ref{orth}). Then $\|\varphi\|_{\infty}\leq C \|k\|_{\infty}$ for some $C>0.$
\end{lemma}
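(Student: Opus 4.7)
The plan is a standard contradiction-translation argument. Suppose no such constant $C$ exists. Then there are sequences $\varphi_n\in L^\infty(\R^2)$ and $k_n$ satisfying $L_0(\varphi_n)=k_n$, the orthogonality conditions (\ref{orth}), with $\|\varphi_n\|_\infty=1$ and $\|k_n\|_\infty\to 0$. Pick points $(x_n,z_n)$ with $|\varphi_n(x_n,z_n)|\ge \tfrac12$. The argument splits into two cases according to whether $\{x_n\}$ is bounded.

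First suppose $\{x_n\}$ is bounded. Set $\tilde{\varphi}_n(x,z):=\varphi_n(x,z+z_n)$. By standard elliptic estimates applied to the equation $L_0(\tilde\varphi_n)=k_n(\cdot,\cdot+z_n)$ and the uniform bound $\|\tilde\varphi_n\|_\infty=1$, a subsequence converges in $C^2_{\rm loc}(\R^2)$ to some bounded $\varphi_\infty$ satisfying $L_0(\varphi_\infty)=0$. Passing to the limit $x_n\to x_\infty$, we also have $|\varphi_\infty(x_\infty,0)|\ge \tfrac12$. Because $\omega'$ and $Z$ decay exponentially in $x$, the orthogonality conditions (\ref{orth}) pass to the limit (by dominated convergence), giving $\int_\R\varphi_\infty(x,z)\omega'(x)\,dx=\int_\R\varphi_\infty(x,z)Z(x)\,dx=0$ for all $z\in\R$. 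By Lemma~\ref{nd}, $\varphi_\infty$ is a linear combination of $\omega'(x)$, $Z(x)\cos\sqrt{\lambda_1}z$, $Z(x)\sin\sqrt{\lambda_1}z$. Testing against $\omega'$ and $Z$ (which are nontrivial in the $L^2_x$ inner product) forces all three coefficients to vanish, so $\varphi_\infty\equiv 0$, contradicting $|\varphi_\infty(x_\infty,0)|\ge\tfrac12$.

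Next suppose $\{x_n\}$ is unbounded; WLOG $x_n\to+\infty$. Set $\tilde{\varphi}_n(x,z):=\varphi_n(x+x_n,z+z_n)$. Since $\omega^{p-1}(x+x_n)\to 0$ uniformly on compacta, elliptic regularity gives a subsequential limit $\varphi_\infty$ satisfying
\begin{equation}
\Delta\varphi_\infty-\varphi_\infty=0 \qquad\text{in }\R^2,
\end{equation}
with $\|\varphi_\infty\|_\infty\le 1$ and $|\varphi_\infty(0,0)|\ge\tfrac12$. A bounded entire solution of $\Delta u=u$ must vanish (e.g.\ by the maximum principle applied on large balls, or by Liouville-type arguments using $e^{\pm x}$ as barriers), again a contradiction.

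The main obstacle is the first case: ensuring that the orthogonality conditions survive the limit. This is handled by the exponential decay of $\omega'(x)$ and $Z(x)$, which dominates the translation in $z$ and allows dominated convergence to apply. Once the limit $\varphi_\infty$ is bounded, lies in the kernel of $L_0$, and is orthogonal in the prescribed sense, Lemma~\ref{nd} closes the argument immediately.
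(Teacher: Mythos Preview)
The paper does not give its own proof of this lemma but simply cites Lemma~7.2 of \cite{DKPW1}; your contradiction--compactness argument is exactly the standard proof of such a priori estimates and is what one finds in the cited reference. The argument is correct: the key points---that the orthogonality conditions survive the $z$-translation limit by dominated convergence (thanks to the exponential decay of $\omega'$ and $Z$ in $x$), that Lemma~\ref{nd} then forces the limit to vanish, and that a bounded solution of $(\Delta-1)u=0$ on $\R^2$ is identically zero---are all handled properly.
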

\begin{proof} This follows from Lemma 7.2 of \cite{DKPW1}.
\end{proof}

\begin{remark} Note that Lemma \ref{L} implies that $\|\varphi\|_{L^{\infty}(\R^2)}\leq C \|(\cosh z)^{\sigma} k\|_{L^{\infty}(\R^2)}.$
\end{remark}
\begin{lemma} \label{a0} Assume that $\sigma\in (0, 1)$ be fixed. Then there exist $C>0$ such that for any solution of $L_{0}(\varphi)= k$ satisfies
\be \|(\cosh x)^{\sigma} \varphi\|_{C^{2, \mu}(\R^2)}\leq C \|(\cosh x)^{\sigma} k\|_{C^{0, \mu}(\R^2)}.\ee
\end{lemma}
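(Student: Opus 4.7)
The approach I would take is to combine Lemma \ref{L} (which, assuming the implicit orthogonality conditions (\ref{orth}), gives unweighted $L^\infty$ control) with a one-variable barrier that transfers exponential decay in $x$ from $k$ to $\varphi$, and then upgrade to $C^{2,\mu}$ by standard interior Schauder estimates. Writing $A:=\|(\cosh x)^\sigma k\|_{L^\infty(\R^2)}$, Lemma \ref{L} gives $\|\varphi\|_{L^\infty}\le C\|k\|_{L^\infty}\le CA$, which already controls $\varphi$ on any compact set in $x$; the task is then to control $\varphi$ for large $|x|$.

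For the decay, I would use the barrier $W(x)=(\cosh x)^{-\sigma}$. A direct computation gives
\[
\partial_x^2 W=W\bigl[\sigma(\sigma+1)\tanh^2 x-\sigma\bigr],
\]
so, combined with the fact that $p\omega^{p-1}(x)\to 0$ exponentially, for $|x|\ge R_0$ with $R_0$ large one has
\[
L_0 W=(\partial_x^2-1+p\omega^{p-1})W\le (\sigma^2-1+o(1))W\le -c_0 W,
\]
for some $c_0>0$, using crucially $\sigma\in(0,1)$. On the exterior region $\Omega_{R_0}=\{|x|>R_0\}\subset\R^2$, the operator $-L_0$ thus has a uniformly positive zeroth-order coefficient, so the Phragm\'en--Lindel\"of maximum principle applies to bounded functions. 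Choosing
\[
M\ge\max\bigl\{A/c_0,\ \|\varphi\|_{L^\infty}(\cosh R_0)^{\sigma}\bigr\},
\]
the comparison functions $w_\pm=MW\mp\varphi$ satisfy $-L_0 w_\pm\ge 0$ in $\Omega_{R_0}$ and $w_\pm\ge 0$ on $\{|x|=R_0\}$, whence $w_\pm\ge 0$ on $\Omega_{R_0}$. This gives $|\varphi(x,z)|\le M(\cosh x)^{-\sigma}$ for $|x|\ge R_0$, and together with the first step yields $\|(\cosh x)^\sigma\varphi\|_{L^\infty(\R^2)}\le CA$.

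Finally I would upgrade to $C^{2,\mu}$ by localized Schauder estimates. For each $(x_0,z_0)\in\R^2$, applying interior Schauder on $B_1(x_0,z_0)\subset B_2(x_0,z_0)$ to $L_0\varphi=k$ gives
\[
\|\varphi\|_{C^{2,\mu}(B_1)}\le C\bigl(\|k\|_{C^{0,\mu}(B_2)}+\|\varphi\|_{L^\infty(B_2)}\bigr).
\]
Since $(\cosh x)^\sigma/(\cosh x_0)^\sigma$ is bounded above and below on $B_2(x_0,z_0)$ uniformly in $(x_0,z_0)$, multiplying through by $(\cosh x_0)^\sigma$ and taking the supremum in $(x_0,z_0)$, then inserting the weighted $L^\infty$ bound from the previous paragraph, yields the claimed inequality.

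The main obstacle is the barrier step: one must check that $W=(\cosh x)^{-\sigma}$ is a genuine supersolution for $-L_0$ at infinity --- which fails precisely at $\sigma=1$ --- and justify the maximum principle on the unbounded strip $\Omega_{R_0}$. Both are handled by the uniform positivity of the zeroth-order coefficient of $-L_0$ outside a compact set in $x$, together with the boundedness of $\varphi$ secured by Lemma \ref{L}.
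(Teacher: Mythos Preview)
Your argument is correct and is essentially the standard route to this estimate: unweighted $L^\infty$ control via Lemma \ref{L}, a barrier $W=(\cosh x)^{-\sigma}$ to transfer the weight (which works precisely because $\sigma\in(0,1)$), and localized Schauder to reach $C^{2,\mu}$. The paper does not actually give a proof here but simply cites Lemma 7.3 of \cite{DKPW1}; your write-up is exactly the kind of argument that reference contains, so there is no real difference to discuss. One small remark: you are right that the orthogonality conditions (\ref{orth}) must be read as implicit in the statement, since otherwise $\varphi=\om'$, $k=0$ would violate the estimate.
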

\begin{proof} This is again Lemma 7.3 of \cite{DKPW1}. \end{proof}
\begin{lemma}\label{c2} Assume that $\sigma\in (0,1)$. Then there exists $a_{0}>0$ such that for all $a\in (0, a_{0}]$
 there exists a constant $C_{a}>0$ but remains bounded as $a$ tends to zero, such that
\be\no  \|(\cosh x)^{\sigma} (\cosh z)^{a} \varphi\|_{L^{\infty}(\R^2)}\leq C_{a} \|(\cosh x)^{\sigma}  (\cosh z)^{a} k\|_{L^{\infty}(\R^2)}.\ee
\end{lemma}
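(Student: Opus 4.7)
I will prove Lemma~\ref{c2} by a contradiction/concentration-compactness argument that bootstraps on Lemmas~\ref{a0} and \ref{nd}. Fix $a \in (0, a_0]$ with $a_0>0$ to be chosen, and suppose the estimate fails. Then there exist sequences $\varphi_n$, $k_n$ with $L_0 \varphi_n = k_n$, satisfying the orthogonality conditions (\ref{orth}), normalized so that $\|(\cosh x)^\sigma (\cosh z)^a \varphi_n\|_\infty = 1$ while $\|(\cosh x)^\sigma (\cosh z)^a k_n\|_\infty \to 0$. Choose $(x_n,z_n)$ with $(\cosh x_n)^\sigma (\cosh z_n)^a |\varphi_n(x_n,z_n)| \geq \tfrac12$. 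The weights $(\cosh x)^\sigma$, $(\cosh z)^a$ and the coefficients of $L_0$ are even in $x$ and independent of $z$, so after replacing $\varphi_n$ by $\varphi_n(\pm x, \pm z)$ (preserving (\ref{orth}) since $\omega'$ is odd and $Z$ is even) we may assume $x_n, z_n \geq 0$.

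If $z_n$ stays bounded, then $(\cosh z_n)^a$ is bounded, so $(\cosh x_n)^\sigma |\varphi_n(x_n,z_n)| \geq c > 0$, yet Lemma~\ref{a0} (combined with $\|(\cosh x)^\sigma k_n\|_\infty \leq \|(\cosh x)^\sigma (\cosh z)^a k_n\|_\infty \to 0$) forces $\|(\cosh x)^\sigma \varphi_n\|_\infty \to 0$, a contradiction. Thus $z_n \to +\infty$, and I split on whether $x_n$ is bounded. If $x_n$ is bounded, I translate only in $z$ via $\tilde\varphi_n(x,z) := (\cosh z_n)^a \varphi_n(x, z+z_n)$. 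The normalization yields $|\tilde\varphi_n(x,z)| \leq (\cosh x)^{-\sigma} (\cosh z_n)^a/(\cosh(z+z_n))^a$, which on compact $z$-sets is bounded by $C(\cosh x)^{-\sigma}$. Schauder estimates produce a subsequential limit $\tilde\varphi_\infty$ solving $L_0 \tilde\varphi_\infty = 0$ with $(\cosh x)^\sigma \tilde\varphi_\infty \in L^\infty$; the orthogonality relations (\ref{orth}) pass to the limit since $\omega'(x)$ and $Z(x)$ decay in $x$. By Lemma~\ref{nd}, $\tilde\varphi_\infty$ is a linear combination of $\omega'$, $Z\cos\sqrt{\lambda_1}z$, $Z\sin\sqrt{\lambda_1}z$, each of which is eliminated by (\ref{orth}); hence $\tilde\varphi_\infty \equiv 0$, contradicting $|\tilde\varphi_\infty(x_\infty,0)| \geq c > 0$.

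The delicate case is $x_n \to +\infty$. I translate in both variables, $\tilde\varphi_n(x,z) := (\cosh x_n)^\sigma (\cosh z_n)^a \varphi_n(x+x_n, z+z_n)$. Since $p\omega^{p-1}(x+x_n) \to 0$ locally and the rescaled right-hand side vanishes locally, any subsequential limit satisfies the constant-coefficient problem $\Delta \tilde\varphi_\infty - \tilde\varphi_\infty = 0$ on $\R^2$, with $|\tilde\varphi_\infty(0,0)| \geq \tfrac12$ and the pointwise bound $|\tilde\varphi_\infty(x,z)| \leq e^{-\sigma x - a z}$ arising as the limit of the weight ratio. Introduce $v := \tilde\varphi_\infty/[\cosh(\sigma x)\cosh(a z)]$. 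Using $\cosh(\sigma x)''/\cosh(\sigma x)=\sigma^2$ and likewise in $z$, a direct computation gives
\[
\Delta v + 2\sigma \tanh(\sigma x)\, v_x + 2a \tanh(a z)\, v_z - (1-\sigma^2 - a^2)\, v = 0,
\]
and the inequality $1/\cosh(\sigma x) \leq 2 e^{-\sigma|x|}$ (and similarly in $z$) together with $|\tilde\varphi_\infty| \leq e^{-\sigma x - a z}$ gives $|v| \leq 4$ on all of $\R^2$. Choosing $a_0$ so that $\sigma^2 + a_0^2 < 1$ (possible because $\sigma < 1$), the zero-order coefficient is strictly negative, so the standard Liouville argument on $\R^2$ (translate to a sequence where $v$ approaches its supremum, extract an elliptic limit attaining it, apply the strong maximum principle) gives $v \equiv 0$, hence $\tilde\varphi_\infty \equiv 0$, contradicting $|\tilde\varphi_\infty(0,0)| \geq \tfrac12$.

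The main obstacle is this last subcase: when the concentration point escapes in both coordinates, the spectral content of Lemma~\ref{nd} is no longer available, and one must instead prove a Liouville-type rigidity for $\Delta - 1$ on $\R^2$ against exponential growth $e^{\sigma|x| + a|z|}$. The quotient trick kills two birds with one stone: it globalizes the exponential decay into genuine $L^\infty$ boundedness on all of $\R^2$, and it converts the zero-order coefficient into the strictly dissipative $-(1-\sigma^2 - a^2)$, which is why precisely the condition $\sigma^2 + a_0^2 < 1$ is required. Since $\sigma \in (0,1)$ is fixed, $a_0$ is positive, and the entire argument is uniform in $a \in (0, a_0]$, so the constant $C_a$ remains bounded as $a \to 0$, consistently with Lemma~\ref{a0} appearing as the formal $a=0$ endpoint.
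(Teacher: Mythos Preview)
The paper itself does not give a proof, merely citing Lemma~7.4 of \cite{DKPW1}; your contradiction/concentration-compactness argument is precisely the standard route used there, and your treatment of the two extreme cases ($z_n$ bounded; both $x_n,z_n\to\infty$) is correct. In particular, the quotient trick in the last case is exactly the right mechanism, and your identification of the threshold $\sigma^2+a_0^2<1$ is what fixes $a_0$.

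There is, however, a genuine gap in the intermediate case $x_n$ bounded, $z_n\to+\infty$. After translating only in $z$, the pointwise bound you obtain in the limit is
\[
|\tilde\varphi_\infty(x,z)|\le (\cosh x)^{-\sigma}\,e^{-a z},
\]
which allows exponential growth as $z\to-\infty$. Your claim that ``$(\cosh x)^\sigma\tilde\varphi_\infty\in L^\infty$'' is therefore not justified, and Lemma~\ref{nd} (stated only for \emph{bounded} solutions) cannot be invoked directly. With growth $e^{a|z|}$ the homogeneous problem $L_0\varphi=0$ can in principle pick up the extra Jordan-block solution $\omega'(x)\,z$ from the zero eigenvalue, and if the one-dimensional operator $-\partial_x^2+1-p\omega^{p-1}$ had a further eigenvalue $\mu\in(0,a^2)$ one would also see $Z_\mu(x)e^{\pm\sqrt{\mu}\,z}$.

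The fix is short and in the same spirit as your last case: set $v:=\tilde\varphi_\infty/\cosh(az)$, so that $|v|\le 2(\cosh x)^{-\sigma}$ is genuinely bounded, while $v$ satisfies
\[
v_{xx}+v_{zz}+2a\tanh(az)\,v_z+\bigl(p\omega^{p-1}-1+a^2\bigr)v=0
\]
and inherits the orthogonality $\int v\,\omega'\,dx=\int v\,Z\,dx=0$ for every $z$ (division by a function of $z$ alone does not affect the $x$-integrals). For $a\le a_0$ small this is an $O(a)$ perturbation of $L_0$, and the same blow-up argument that proves Lemma~\ref{nd}/\ref{a0} (translate in $z$, pass to the limit, use orthogonality) now applies to a \emph{bounded} function and yields $v\equiv0$. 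Alternatively, one may simply take $a_0$ below the spectral gap of $-\partial_x^2+1-p\omega^{p-1}$ on $\{Z,\omega'\}^{\perp}$ and observe that the only extra kernel element compatible with growth $e^{a|z|}$ is $\omega'(x)\,z$, which your orthogonality already eliminates. Either route closes the gap and leaves the rest of your argument intact.
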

\begin{proof} This follows from Lemma 7.4 of \cite{DKPW1}.
\end{proof}

\subsection{Surjectivity} As far as the existence of  solution of (\ref{s1}) and (\ref{orth}) is concerned we assume that
\be\label{cu1} \int_{\R}k (x, z) \om_{x}(x) dx= 0 \ee \be \label{cu2} \int_{\R}k(x, z)Z(x) dx=0\ee
for all $z\in \R,$ we prove the following proposition.

\begin{prop} Assume that $\sigma\in (0, 1)$  be fixed. Then there exists $a_{0}>0$ such that for all $a\in (0, a_{0}];$
 there exists a constant $C_{a}>0$  such that for all $k$ satisfying the orthogonality conditions (\ref{cu1}), (\ref{cu2})  and  $$L(\varphi)= k,$$ with
\be \| (\cosh x)^{\sigma} (\cosh z)^{a} k\|_{C^{0, \mu}(\R^2)}<+\infty \ee
implies
\be  \|(\cosh x)^{\sigma} (\cosh z)^{a} \varphi\|_{C^{2, \mu}(\R^2)}\leq C_{a}  \| (\cosh x)^{\sigma} (\cosh z)^{a} k\|_{C^{0, \mu}(\R^2)}.\ee
\end{prop}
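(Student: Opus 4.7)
The statement is a weighted $C^{2,\mu}$ a priori estimate for the equation $L_0(\varphi)=k$, and the strategy is to \emph{upgrade} the already-established weighted $L^\infty$ bound of Lemma \ref{c2} to a $C^{2,\mu}$ bound by combining it with local elliptic Schauder estimates. The key observation is that Lemma \ref{c2} already gives
$$
\|(\cosh x)^{\sigma}(\cosh z)^{a}\varphi\|_{L^{\infty}(\R^2)}\le C_a\|(\cosh x)^{\sigma}(\cosh z)^{a}k\|_{L^{\infty}(\R^2)},
$$
so what remains is purely a local regularity matter, since $L_0=\Delta-1+p\omega^{p-1}$ has smooth bounded coefficients on $\R^2$.

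The plan is as follows. Fix an arbitrary point $(x_0,z_0)\in\R^2$ and apply the classical interior Schauder estimate to $L_0\varphi=k$ on the unit ball $B_1(x_0,z_0)$ in the form
$$
\|\varphi\|_{C^{2,\mu}(B_{1/2}(x_0,z_0))}\le C\Bigl(\|\varphi\|_{L^{\infty}(B_1(x_0,z_0))}+\|k\|_{C^{0,\mu}(B_1(x_0,z_0))}\Bigr),
$$
where $C$ is independent of $(x_0,z_0)$ because the coefficient $p\omega^{p-1}-1$ of $L_0$ is smooth and uniformly bounded in $C^{0,\mu}$ on all of $\R^2$. Next, multiply this inequality by the weight $W(x_0,z_0):=(\cosh x_0)^{\sigma}(\cosh z_0)^{a}$. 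The point is that $W$ is \emph{slowly varying}: because
$$
\sup_{|t|\le 1}\frac{(\cosh(x+t))^{\sigma}}{(\cosh x)^{\sigma}}\le \kappa_{\sigma},\qquad \sup_{|s|\le 1}\frac{(\cosh(z+s))^{a}}{(\cosh z)^{a}}\le \kappa_{a},
$$
the weight $W(x_0,z_0)$ is comparable, up to a constant depending only on $\sigma$ and $a_0$, to $W(x,z)$ for all $(x,z)\in B_1(x_0,z_0)$. Hence one may absorb $W(x_0,z_0)$ inside each norm on the right-hand side to obtain
$$
W(x_0,z_0)\|\varphi\|_{C^{2,\mu}(B_{1/2}(x_0,z_0))}\le C\Bigl(\|W\varphi\|_{L^{\infty}(\R^2)}+\|Wk\|_{C^{0,\mu}(\R^2)}\Bigr).
$$

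Finally, I take the supremum over $(x_0,z_0)\in\R^2$. Using again the slow variation of $W$, the left-hand side is equivalent to $\|W\varphi\|_{C^{2,\mu}(\R^2)}$, so I arrive at
$$
\|W\varphi\|_{C^{2,\mu}(\R^2)}\le C\Bigl(\|W\varphi\|_{L^{\infty}(\R^2)}+\|Wk\|_{C^{0,\mu}(\R^2)}\Bigr).
$$
Inserting the weighted $L^{\infty}$ bound from Lemma \ref{c2} then yields the desired weighted $C^{2,\mu}$ estimate with constant $C_a$ that remains bounded as $a\downarrow 0$, inheriting this property from Lemma \ref{c2}. The main technical point to verify carefully is the slow-variation property of the composite weight $(\cosh x)^{\sigma}(\cosh z)^{a}$ and the fact that the Schauder constant for $L_0$ can be taken uniform in the base point; both reduce to standard covering/translation arguments given that $\omega$ and its derivatives are bounded on $\R$. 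Once those two ingredients are in place, the proposition follows immediately from Lemma \ref{c2}.
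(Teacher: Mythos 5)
Your regularity-upgrade argument is correct and clean: interior Schauder with a base-point-independent constant (since $p\omega^{p-1}-1$ is uniformly bounded in $C^{0,\mu}$) combined with the slow variation of $(\cosh x)^{\sigma}(\cosh z)^{a}$ over unit balls is exactly the right mechanism to pass from the weighted $L^{\infty}$ bound of Lemma \ref{c2} to the weighted $C^{2,\mu}$ bound, and the paper relies on this implicitly when it invokes ``elliptic estimates'' at the end. However, you have proved only the \emph{a priori estimate} half of the proposition and skipped the \emph{existence} half, which is what the subsection title (``Surjectivity'') and the statement are actually about. You write that ``Lemma \ref{c2} already gives'' the weighted $L^{\infty}$ bound; but Lemma \ref{c2} is an a priori estimate that applies to a bounded solution $\varphi$ which is \emph{already known to exist} and to satisfy the orthogonality conditions (\ref{orth}). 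The proposition asserts that for every $k$ obeying (\ref{cu1})--(\ref{cu2}) such a $\varphi$ can be produced, and this is the part you have not addressed.

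The paper's own proof supplies precisely this missing content: it truncates to the periodic slab $\R^2_{\xi}=\R\times\R/\xi\mathbb{Z}$, where the coercivity $\int_{\R^2_{\xi}}\bigl[|\nabla\varphi|^2-(p\omega^{p-1}-1)\varphi^2\bigr]\ge\frac{\lambda_1}{2}\int_{\R^2_{\xi}}\varphi^2$ on the orthogonal complement of the kernel allows Lax--Milgram to produce a unique $\varphi_{\xi}\in H^1(\R^2_{\xi})$; it then \emph{derives} the orthogonality of $\varphi_{\xi}$ (i.e.\ (\ref{orth})) from the assumed orthogonality of $k$ (i.e.\ (\ref{cu1})--(\ref{cu2})) by a periodicity argument, which is the step that makes Lemmas \ref{nd1} and \ref{c2} applicable; and finally it passes to the limit $\xi\to\infty$ by elliptic estimates and compactness. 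None of these steps appear in your proposal. To repair it, you would need to prepend the periodic construction (or some equivalent existence mechanism) and verify that (\ref{orth}) propagates from (\ref{cu1})--(\ref{cu2}); after that, your Schauder/slowly-varying-weight argument is a valid and arguably more explicit way to finish than the paper's terse closing sentence.
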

\begin{proof} The main idea is to prove the result for functions which are $R$ periodic in the $z-$ variable. We consider the problem
$$L_{0}(\varphi)=k$$ with the orthogonality conditions (\ref{orth}) and (\ref{orth1}).  We will apply an approximation argument.
Let $\varphi(x, z)$ be a $\xi $ periodic function in the $z$ variable where $\xi>0.$ Define $\R_{\xi}^2= \R\times \frac{\R}{\xi \mathbb{Z}}.$
Then we have
$$\int_{\R^2_{\xi}}[ |\nabla \varphi|^2-(p \om^{p-1}-1) \varphi^2]\geq \frac{\lambda_1}{2}\int_{\R^2_{\xi}} \varphi^2$$
Hence given $k\in L^{2}(\R^2_{\xi})$ satisfying
\be \no  \int_{\R^2_{\xi}}k  \om_{x} =0 =\int_{\R^2_{\xi}} k Z(x) \ee
by Lax-Milgram lemma there exists a unique $\varphi\in H^{1}(\R^2_{\xi})$ such that
$$\|\varphi\|_{H^{1}(\R^2_{\xi})} \leq C \|k\|_{L^{2}(\R^2_{\xi})}.$$
Moreover, by elliptic regularity, we have
$$\|\varphi\|_{L^{\infty}(\R^2_{\xi})} \leq C (\|k\|_{L^{2}(\R^2_{\xi})}+ \|k\|_{L^{\infty}(\R^2_{\xi})}).$$
Suppose in addition $k$ satisfies (\ref{cu1}) and (\ref{cu2}) we obtain
$$\int_{0}^{\xi} \bigg(\int_{\R} \varphi \om_{x} dx\bigg) \psi_{zz} dz=0$$
and
$$\int_{0}^{\xi} \bigg(\int_{\R} \varphi Z(x) dx\bigg) \psi_{zz} dz=0. $$
Hence
$$z\mapsto \int_{\R} \varphi \om_{x}dx  ~~~ \text{and }  z\mapsto \int_{\R} \varphi Z dx$$
do not depend on $z$ since its integral over $[0,\xi]$ is $0,$ we conclude that $\varphi$ satisfies (\ref{s1}) and (\ref{orth}).

Hence we can apply Lemma \ref{nd1} and \ref{c2} to obtain the estimate
$$\|(\cosh x)^{\sigma} \varphi\|_{L^{\infty}(\R^2_{\xi})}\leq C \|(\cosh x)^{\sigma} k\|_{L^{\infty}(\R^2_{\xi})}.$$
where $C>0$ is a constant independent of $\xi.$ Given any $k$ satisfying the condition of the Proposition. Let
$$k_{\xi}= k\chi_{\R^2_{\xi}}$$
where $\chi$ denotes the characteristic function. Let $\varphi_{\xi}$ be the corresponding solution to
$$L_{0}\varphi_{\xi}= k_{\xi}$$
Elliptic estimates with compactness arguments yield we can pass through the limit as $\xi\rightarrow +\infty$, there exists a
bounded solution $\varphi$ of $ L_{0} \varphi= k.$
\end{proof}

\section{Linear Theory for Multiple interfaces}
\subsection{Gluing Procedure} In this section we decompose the nonlinear projected problem (\ref{nlp}) into four coupled equations. We define
\begin{equation} \label{rho3}
\rho(s)=\left\{\begin{aligned}
        1 &&\text{if }& |s|\leq \frac{7}{8} \log \frac{1}{\al}\\
        0 &&\text{if }& |s|> \frac{15}{16} \log \frac{1}{\al}\\
         \end{aligned}
  \right.
\end{equation} and
\begin{equation}
  \label{rho4}
\rho_{j}(s, t)=\left\{\begin{aligned}
        1 &&\text{if }& |(s, t)-\xi_{j}\vec{e}_{1}| \leq \frac{7}{8} \log \frac{1}{\al}\\
        0 &&\text{if }& |(s, t)-\xi_{j}\vec{e}_{1}| >\frac{15}{16} \log \frac{1}{\al} .\\
         \end{aligned}
  \right.
\end{equation}
Moreover, we define $X_{}^{\star}\rho= \rho({\text{x}}).$ Using the definition, we obtain $\rho_{j}\eta_{j}=\rho_{j}$
and $\rho_{j}\rho_{k}=0$ for $j\neq k.$ Similarly we have $\rho \eta =\rho.$ Moreover, $\rho \eta_{j}=0$ for every $j\in \N.$
Note that we are looking for solutions of (\ref{nlp}) of the form
\be \label{eax} \varphi=\sum_{j=1}^{\infty} \eta_{j} \phi_j + \eta \phi+ \psi  \ee
where $\psi=\psi_1+ \psi_2.$ Then for $j\in \N$, we have
\bea\label{b1} \rho_{j}[\mathcal{L}\phi_{j} - \frac{1}{2}(S(\text{w})+ N)- c_{j} \om_{j,x}]+ (\mathcal{L}- \De+ 1)\psi_{1} \rho_{j} =0 \eea
\be\label{b2} \rho[\mathcal{L}\phi- \frac{1}{2}(S(\text{w})+ N)-d(z) \om_{}'- m(z) Z]+ [\mathcal{L}- \De+1] \psi_{2} \rho=0\ee
$\psi_1$ and $\psi_2$ satisfy the following equation,
\bea\label{b3a} (\De-1) \psi_{1}&=& \frac{1}{2}(1- \sum_{j=1}^{\infty} \eta_{j})(S(\text{w})+ N) \no\\&-& \sum_{j=1}^{\infty}(\mathcal{L}(\phi_{j}\eta_{j})- \eta_{j}\mathcal{L}(\phi_{j}))- (1- \sum_{j=1}^{\infty}\eta_{j})(\mathcal{L}- \De +1)\psi_{1} \eea and
\bea\label{b3b} (\De-1) \psi_{2} &=& \frac{1}{2}(1- \eta)(S(\text{w})+ N)\no\\&-& (\mathcal{L}(\phi\eta) - \eta \mathcal{L}(\phi))-(1- \eta)(\mathcal{L}- \De +1)\psi_{2}\eea
where $N= N(\sum_{j=1}^{\infty} \eta_{j} \phi_j + \eta \phi+ \psi ).$ This is a coupled system and the coupling terms are of the higher order in $\al$.
Note that (\ref{b2}) can be written as
\be\label{sw1} [\pa ^2_{\text{x}}+ \pa ^2_{\text{z}}- F'(\om)]X_{}^{\star}\phi_{}= X_{}^{\star}k + X_{}^{\star} ( d \rho \om ')+ X_{}^{\star} (m \rho  Z)\ee
where
\bea\label{sw2}  X_{}^{\star}k&=& X_{}^{\star}[\frac{\rho}{2}(S(\textbf{w})+N)]-  X^{\star}[\rho  (\mathcal{L}- \De+1) \psi_{2}]\no\\&-& X_{}^{\star}\rho  (\mathcal{L}(\phi))+ X_{}^{\star}\rho [\pa ^2_{\text{x}}+ \pa ^2_{\text{z}}- F'(\om)]X_{}^{\star}\phi \eea
Define ${\Phi}= (\phi_{1}, \phi_{2}, \cdots \cdots).$ Let the right hand side of  the equation (\ref{b3a}) and (\ref{b3b}) be $Q_{1}= Q_{1}(\Phi, \psi_{1})$ and $Q_{2}= Q_{2}(\phi, \psi_{2})$ respectively. Then equation (\ref{b3a}) and (\ref{b3b}) reduces to
\be\label{back1} (\De -1)\psi_{1}= Q_{1}\ee
\be\label{back2} (\De -1)\psi_{2}= Q_{2}\ee
We will call (\ref{back1}) and (\ref{back2}) as the {\em background system}. We will first solve the background system. Then for the given solution $(\psi_{1}, \psi_{2})$, we solve the initial equations (\ref{b1}) and (\ref{b2}).

\subsection{Error of the initial approximation} For $0<\mu\leq 1,$ we define
the weighted norms
$$\|\varphi\|_{C^{l, \mu}_{\sigma, \theta}(\R^2)}=\sup_{\hat{x}\in \R^2} \bigg((\cosh x)^{\sigma}(\cosh z)^{\theta} \|\varphi\|_{C^{2, \mu}(B_{1}(\hat{x}))}\bigg).$$
We also define the norms
$$\|\varphi\|_{\sigma}= \sup_{(x,z)\in \R^2}\bigg(\sum_{i=1}^{\infty}
e^{-\sigma |(x, z)-\xi_i\vec{e}_{1}| }\bigg)^{-1}|\varphi (x, z)|$$ and
\be \no \boldsymbol{\chi}= (\chi_{1}, \cdots, \chi_{k}, \cdots).\ee
\be \no \|\boldsymbol{\chi}\|= \max_{i} \al^{-i}|\chi_{i}|\ee

\begin{prop}\label{comp0} For $i=1, 2;$ $S({\bf{w}}^{(i)})= S({\bf{w}}, v, h^{(i)}, e^{(i)}, \de_{}, \boldsymbol{\chi}^{(i)})$ is a continuous function of $v,\de_{}$  and satisfies
\be\label{ee} \|X_{}^{\star}(\rho S({\bf{w}})) \|_{C^{0, \mu}_{\sigma, \theta}(\R^2)} \leq C\al.\ee
Moreover, it is a Lipschitz function of $h$, $e$ and $\boldsymbol{\chi}$;
\bea \|(X_{}^{(1)})^{\star}\rho^{(1)}S({\bf{w}}^{(1)})&-&(X_{}^{(2)})^{\star} \rho^{(2)}S({\bf{w}}^{(2)})\|_{C^{0, \mu}_{\sigma, \theta}(\R^2)}\leq C (\| h^{(1)}- h^{(2)}\|_{C^{2, \mu}_{\theta}(\R)}\no \\&+& \| e^{(1)}- e^{(2)}\|_{C^{2, \mu}_{\theta}(\R)}+ \al \| \boldsymbol{\chi}^{(1)}-  \boldsymbol{\chi}^{(2)}\|_{\al}).\eea
\end{prop}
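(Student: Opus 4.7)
The plan is to estimate $S({\bf w})$ on the support of $\rho$, where we work in the Fermi coordinates $(\text{x},\text{z})$ around $\bar{\gamma}$, and decompose
\[
S({\bf w}) = S_{\mathrm{curv}} + S_{\mathrm{glue}} + S_{\mathrm{corr}} + S_{\mathrm{int}},
\]
corresponding to the curvature corrections from the non-flat Laplacian (\ref{lap1}), the gluing error between $\om_{\de}$ and $\om$ in $\text{w}=\Xi\,\om_{\de}+\Xi_{0}\om$, the linear/nonlinear response to the correction $e(\text{z})Z(\text{x})$, and the interaction of $\text{w}+eZ$ with the bump chain $\sum_{j}\om_{j}$. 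The key observation is that $\om_{\de}$ and $\om$ each solve the equation exactly in the corresponding flat slabs, so everything outside the gluing band $|\text{z}|\in(1,2)$ is a pure geometric or perturbative error.

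For the individual bounds, by (\ref{2.43}) the coefficients $a_{ij},b_{i}$ are $\mathcal{O}_{C^{0,\mu}_{\theta}}(\al)$ times at most a linear factor in $\text{x}$. Applied to $\text{w}+eZ$, whose $(\text{x},\text{z})$-derivatives decay exponentially in $\text{x}$, this yields $\|S_{\mathrm{curv}}\|_{C^{0,\mu}_{\sigma,\theta}}\le C\al$ for $\sigma<1$, the linear factor being absorbed by the transverse exponential decay. For $S_{\mathrm{glue}}$, on $|\text{z}|\in(1,2)$ we expand $(\Xi\,\om_{\de}+\Xi_{0}\om)^{p}-\Xi\,\om_{\de}^{p}-\Xi_{0}\om^{p}$ and use $|\om_{\de}-\om|\le C|\de|\le C\al^{1+k_{4}}$, giving a bound of order $\al^{1+k_{4}}$. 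For $S_{\mathrm{corr}}$, the leading linear part of $(\De-1+p\,\text{w}^{p-1})(eZ)$ in the homoclinic slab is $e''(\text{z})Z(\text{x})+\lambda_{1}e(\text{z})Z(\text{x})$, which by (\ref{2.18}) is bounded by $C\al^{2+k_{3}}$, the cross nonlinearities being of higher order. Finally, on $\mathrm{supp}\,\rho$ every point has distance at least $L/2+\mathcal{O}(1)$ from every $\xi_{j}\vec{e}_{1}$, so $\sum_{j}\om_{j}=\mathcal{O}(e^{-L})=\mathcal{O}(\al^{\sqrt{2}})$ there and $S_{\mathrm{int}}$ is absorbed in $C\al$. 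Pulling back through the Fermi map (\ref{w14}) and (\ref{imes})--(\ref{imes1}) the weights $(\cosh x)^{\sigma}(\cosh z)^{\theta}$ and $(\cosh \text{x})^{\sigma}(\cosh \text{z})^{\theta}$ are comparable on $\mathrm{supp}\,\rho$, which yields (\ref{ee}).

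The Lipschitz estimate is obtained by differencing the decomposition. A change in $h$ enters through (a) the shift $\text{x}=t-h(\text{z})$ inside the arguments of $\text{w}$, $Z$ and their derivatives, producing a difference controlled by $\|\pa(\text{w}+eZ)\|_{\infty}\,\|h^{(1)}-h^{(2)}\|_{C^{2,\mu}_{\theta}}$, and (b) the coefficients $a_{ij},b_{i}$, which depend linearly on $h',h''$ via the curvature $\kappa$ and are therefore Lipschitz in $\|h\|_{C^{2,\mu}_{\theta}}$. A change in $e$ affects only $S_{\mathrm{corr}}$, linearly to leading order, so this contribution is bounded by $C\|e^{(1)}-e^{(2)}\|_{C^{2,\mu}_{\theta}}$. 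A change in $\boldsymbol{\chi}$ only modifies $\sum_{j}\om_{j}$; on $\mathrm{supp}\,\rho$ the difference $\om_{j}^{(1)}-\om_{j}^{(2)}$ is bounded by $C|\chi_{j}^{(1)}-\chi_{j}^{(2)}|\,|\nabla\om_{0}|$ evaluated at distance $\ge jL+\mathcal{O}(1)$, so by the definition $\|\boldsymbol{\chi}\|_{\al}=\max_{j}\al^{-j}|\chi_{j}|$ and the pointwise bound $\om_{0}(r)\sim e^{-r}$ the series sums to $C\al\,\|\boldsymbol{\chi}^{(1)}-\boldsymbol{\chi}^{(2)}\|_{\al}$. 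The main technical obstacle is the bookkeeping of the two weight functions: one must verify that every error computed in $(\text{x},\text{z})$ retains the product weight $(\cosh x)^{\sigma}(\cosh z)^{\theta}$ when transported back to $(x,z)$ via (\ref{imes})--(\ref{imes1}), and that no spurious $\text{z}$-growth is introduced by the linear factors in $a_{ij},b_{i}$, the compensation being provided by the extra exponential decay of $\om$ and $Z$ in $\text{x}$.
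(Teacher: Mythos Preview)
Your decomposition $S_{\mathrm{curv}}+S_{\mathrm{glue}}+S_{\mathrm{corr}}+S_{\mathrm{int}}$ is exactly the paper's $E_1+E_2+E_3$ (the paper merges your first two pieces into $E_1$), and your treatment of the curvature, gluing, and $eZ$-correction terms is correct and matches the paper's argument via (\ref{2.43}) and the smallness of $|\de|$ and $\|e\|$.

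The gap is in your handling of $S_{\mathrm{int}}$. First, the numerics are inconsistent: on $\mathrm{supp}\,\rho$ one has $|\text{x}|\le\tfrac{15}{16}\log\tfrac{1}{\al}=\tfrac{15}{16\sqrt{2}}L\approx 0.66L$, so at $z=0$ the distance to $\xi_1\vec e_1$ can be as small as $\approx 0.34L$, not $L/2$; and a lower bound $L/2$ on the distance would in any case yield only $\om_j=O(e^{-L/2})=O(\al^{1/\sqrt{2}})$, not $O(e^{-L})$. More importantly, a uniform pointwise bound on $\sum_j\om_j$ that is independent of $z$ cannot give the weighted estimate (\ref{ee}), because the norm carries the factor $(\cosh z)^{\theta}$. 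The interaction term in $E_3$ is not $\om_j$ alone but the product $p\,\text{w}^{p-1}\sum_j\om_j$, and the paper's essential step is the chain
\[
(p-1)|x-f(z)|+|(x,z)-\xi_i\vec e_1|\;\ge\;\sigma|x-f(z)|+\min\{p-1-\sigma,1\}\sqrt{(f(z)-\xi_i)^2+z^2}
\;\ge\;\sigma|\text{x}|+\min\{p-1-\sigma,1\}\tfrac{L}{\sqrt{2}}+\theta|z|,
\]
using $\sqrt{L^2+z^2}\ge \tfrac{L+|z|}{\sqrt{2}}$. This is what simultaneously produces the factor $\al=e^{-L/\sqrt 2}$ \emph{and} the decay $(\cosh z)^{-\theta}$; without it the bound you claim would blow up after multiplication by $(\cosh z)^{\theta}$. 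The same inequality is also needed in your Lipschitz estimate in $\boldsymbol{\chi}$: your series argument controls the power of $\al$ correctly, but the difference $\text{w}^{p-1}(\om_j^{(1)}-\om_j^{(2)})$ again needs the displayed inequality to acquire the $z$-weight. Your closing remark about ``bookkeeping of the two weight functions'' flags the issue for $a_{ij},b_i$ but not for $S_{\mathrm{int}}$, where it is in fact the crux.
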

So far we have estimated the error near the bump line. The other two propositions deal with the estimate of the norm in the complement of the set $supp\quad\rho$ and the estimation of the error near the spikes. Note that in $\R^2\setminus V_{\zeta}$ we have $S(\textbf{w})= S(\sum_{j=1}^{\infty}\om_{j}).$ Let us denote
\be V_{\zeta}^{\perp}= V_{\zeta}\setminus \text{supp}~\eta.\ee
\begin{prop}\label{comp1} Then we have in $V_{\zeta}^{\perp}$ \be\label{ee1-4} \| S({\bf{w}}) \|_{C^{0, \mu}_{\theta}(V_{\zeta}^{\perp})} \leq C\al^{1+ \frac{3}{4}\sigma} .\ee
Moreover,
\bea\label{ee2} &&\|(S({\bf{w}}^{(1)})- S({\bf{w}}^{{(2)}})\|_{C^{0, \mu}_{\sigma, \theta}(V_{\zeta}^{\perp})}\leq C \al^{\frac{3}{4}\sigma} (\| h^{(1)}- h^{(2)}\|_{C^{2, \mu}_{\theta}(V_{\zeta}^{\perp})}\no \\&+& \| e^{(1)}- e^{(2)}\|_{C^{2, \mu}_{\theta}(V_{\zeta}^{\perp})}+ \al\| \boldsymbol{\chi}^{(1)}-  \boldsymbol{\chi}^{(2)}\|_{\al}).\eea
\end{prop}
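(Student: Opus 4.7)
The plan is to exploit the defining feature of $V_\zeta^\perp$: in Fermi coordinates one has $|\text{x}|\geq\tfrac{7}{8}\log\tfrac{1}{\al}$, so the profile pieces $\om(\text{x})$, $\om_\de(\text{x},\text{z})$ and $Z(\text{x})$ of the bump-line approximation are each bounded by $C\al^{7/8}$. I would split $\textbf{w}=\eta_\zeta\bar w+W_2$ with $W_2=\sum_{j\geq 1}\om_j$ and write
\begin{equation*}
S(\textbf{w})=S(\eta_\zeta\bar w)+S(W_2)+\big[(\eta_\zeta\bar w+W_2)^p-(\eta_\zeta\bar w)^p-W_2^p\big],
\end{equation*}
estimating each of the three pieces separately in the $(\cosh z)^\theta$-weighted $C^{0,\mu}$ norm on $V_\zeta^\perp$.

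For $S(\eta_\zeta\bar w)$ I would insert the Fermi-coordinate Laplacian expansion \eqref{2.43} and use that $\om_\de$ solves the Euclidean equation while $Z$ solves the eigenfunction equation \eqref{h2}. The remainder then has three sources: \emph{(i)} curvature contributions $b_1\pa_{\text{x}}\om_\de+a_{11}\pa_{\text{x}}^2\om_\de$, of order $\al(\cosh\text{z})^{-\theta}\cdot e^{-|\text{x}|}\leq C\al^{15/8}(\cosh\text{z})^{-\theta}$ on $V_\zeta^\perp$; \emph{(ii)} resonance-type contributions coming from the Dancer mode $\de Z\cos(\sqrt{\la_1}\text{z})$ and the extra $e(\text{z})Z$ perturbation, whose size is controlled by \eqref{2.18}--\eqref{2.19} to be much smaller than $\al^{1+3\sigma/4}$; and \emph{(iii)} cut-off commutators $[\De,\eta_\zeta]\bar w$ supported in $V_\zeta\setminus V_{\zeta/2}$, where $|\text{x}|\sim\zeta|\text{z}|$ forces an exponential decay in $\text{z}$ that the weight $(\cosh z)^\theta$ absorbs. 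For $S(W_2)=(\sum_j\om_j)^p-\sum_j\om_j^p$, the bump separation $\xi_{j+1}-\xi_j\sim L$ and the asymptotic $\om_0(r)\sim A_0 r^{-1/2}e^{-r}$ bound each cross term $\om_j\om_k^{p-1}$ with $j\neq k$ by $\om_0(L/2)\lesssim\al^{\sqrt 2/2}$, comfortably smaller than $\al^{1+3\sigma/4}$. The mixed nonlinearity is majorised pointwise by $\eta_\zeta\bar w\cdot W_2^{p-1}+(\eta_\zeta\bar w)^{p-1}W_2$; on $V_\zeta^\perp$ one has $\eta_\zeta\bar w\leq C\al^{7/8}$ while $W_2\leq C\al^{c}$ with $c>0$ off a small neighbourhood of the first bump, so using $p>2$ the product is bounded by $C\al^{1+3\sigma/4}$.

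The Lipschitz estimate follows from the identical decomposition applied to $\partial_s S(\textbf{w}^{(s)})$ with $\textbf{w}^{(s)}=s\textbf{w}^{(1)}+(1-s)\textbf{w}^{(2)}$: the $s$-derivative is linear in the variations $h^{(1)}-h^{(2)}$, $e^{(1)}-e^{(2)}$, $\boldsymbol{\chi}^{(1)}-\boldsymbol{\chi}^{(2)}$, and each coefficient inherits the $\al^{3\sigma/4}$ smallness from $V_\zeta^\perp$ in the same way as for the static estimate. The extra factor of $\al$ multiplying $\|\boldsymbol{\chi}^{(1)}-\boldsymbol{\chi}^{(2)}\|_\al$ reflects the weighted norm on the bump-shift sequence from Section 2.5, since $\om_j$ depends on $\chi_j$ only through translation and $|\chi_j|\leq\|\boldsymbol{\chi}\|_\al\,\al^{j}$. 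The main obstacle will be the careful bookkeeping between the region $|\text{z}|\lesssim 1$, where the first bump sits at distance $O(L)$ from the bump line and is the only source of non-negligible bump-front interaction, and $|\text{z}|\gg 1$, where $V_\zeta$ widens but one simultaneously gains both from the radial decay of $\om_0$ away from the $x$-axis and from the $(\cosh z)^\theta$ weight; balancing these two regimes is what fixes the exponent $1+\tfrac{3}{4}\sigma$, the fraction $\tfrac{3}{4}$ being the effective smallness once one makes room for cut-off transition zones slightly less favourable than the $\tfrac{7}{8}$ suggested by $\eta$ alone.
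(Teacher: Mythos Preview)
Your overall architecture is reasonable, but there is a genuine gap in the treatment of $S(W_2)$ and of the mixed term, and it stems from a geometric misconception about $V_\zeta^\perp$.

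You claim that the cross terms $\om_j\om_k^{p-1}$ are bounded by $\om_0(L/2)\lesssim\al^{\sqrt{2}/2}$ and that this is ``comfortably smaller than $\al^{1+3\sigma/4}$.'' It is not: since $\sqrt{2}/2\approx 0.707<1<1+\tfrac{3}{4}\sigma$ and $\al<1$, one has $\al^{\sqrt{2}/2}\gg \al^{1+3\sigma/4}$. So the stated bound is far too weak. The same issue contaminates your mixed-term estimate, where you hedge with ``$W_2\leq C\al^c$ off a small neighbourhood of the first bump,'' suggesting you believe part of $V_\zeta^\perp$ lies near $z=0$.

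In fact $V_\zeta^\perp$ is \emph{empty} for small $|z|$. Membership in $V_\zeta^\perp$ requires simultaneously $|\mathrm{x}|\geq \tfrac{3}{4}\log\tfrac{1}{\al}$ (outside the cut-off) \emph{and} $|\mathrm{x}|\lesssim\zeta\sqrt{1+\mathrm{z}^2}$ (inside $V_\zeta$), which forces $|\mathrm{z}|\gtrsim \zeta^{-1}\log\tfrac{1}{\al}$. Hence every point of $V_\zeta^\perp$ is at distance at least $|z|\gtrsim\zeta^{-1}\log\tfrac{1}{\al}$ from every bump centre $(\xi_j,0)$, and each $\om_j$ is bounded by $e^{-|z|}\leq\al^{c/\zeta}$, which is what actually kills both $S(W_2)$ and the cross terms. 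Your closing discussion of ``balancing the two regimes $|\mathrm{z}|\lesssim 1$ versus $|\mathrm{z}|\gg 1$'' therefore does not apply: there is only one regime.

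The paper sidesteps this entirely. It keeps the decomposition $S(\textbf{w})=E_1+E_2+E_3$ already used for Proposition~\ref{comp0}, where the interaction contributions are shown pointwise to satisfy $|E_i|\leq C\al\,(\cosh\mathrm{x})^{-\sigma}(\cosh\mathrm{z})^{-\theta}$ via the triangle-inequality computation
\[
(p-1)|x-f(z)|+|x-\xi_i|\;\geq\;\sigma|x-f(z)|+\min\{p-1-\sigma,1\}\tfrac{L}{\sqrt 2}+\theta|z|.
\]
Then on $V_\zeta^\perp$ the single observation $|\mathrm{x}|\geq \tfrac{3}{4}\log\tfrac{1}{\al}$ converts the $(\cosh\mathrm{x})^{-\sigma}$ factor into $\al^{3\sigma/4}$, yielding the stated $\al^{1+3\sigma/4}$ bound directly; there is no separate bookkeeping for $S(W_2)$ or for a ``small $|\mathrm{z}|$'' zone. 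This is why the exponent is exactly $\tfrac{3}{4}\sigma$: it is the $\sigma$-weight from Proposition~\ref{comp0} cashed in at the inner boundary of $V_\zeta^\perp$, not the outcome of a balancing argument.
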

\begin{prop}\label{comp2} In $\R^2\setminus V_{\zeta}$ we have \be\label{ee1} \| S({\bf{w}}) \|_{\sigma} \leq C\al^{}.\ee
Moreover,
\bea\label{ee2-2} &&\|(S({\bf{w}}^{(1)})- S({\bf{w}}^{(2)})\|_{\sigma}\leq C \al  \|\boldsymbol{\chi}^{(1)}-  \boldsymbol{\chi}^{(2)}\|_{\al}.\eea
\end{prop}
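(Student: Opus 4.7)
Outside the tubular neighborhood $V_\zeta$ of the bump curve the cutoff $\eta_\zeta$ vanishes, so the global approximation (\ref{ap}) reduces to $\textbf{w} = \sum_{j=1}^\infty \om_j$, a pure array of shifted two-dimensional ground states. Consequently the parameters $v,h,e,\de$ play no role in $\R^2\setminus V_\zeta$; only the shifts $\boldsymbol{\chi}$ do. Since each $\om_j$ solves the ground-state equation (\ref{ground2}), the error collapses to the pure interaction
$$S(\textbf{w}) = \Big(\sum_j \om_j\Big)^p - \sum_j \om_j^p.$$
This identity is the starting point for both (\ref{ee1}) and (\ref{ee2-2}).

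For the bound (\ref{ee1}) I will invoke the elementary pointwise inequality
$$\Big|\Big(\sum_j \om_j\Big)^p - \sum_j \om_j^p\Big| \le C\sum_{j\ne k}\om_j^{p-1}\om_k,$$
valid for nonnegative summands and $p>2$, and then combine the exponential decay $\om_0(r)\sim r^{-1/2}e^{-r}$ with the separation $|\xi_j-\xi_k|\ge|j-k|(L-C)$ guaranteed by (\ref{spi2}). At a point $\hat{x}=(x,z)$ with $d_j:=|\hat{x}-\xi_j\vec{e}_1|$ one has $\om_j(\hat{x})\le Ce^{-d_j}$ and $d_j+d_k\ge|\xi_j-\xi_k|$, so $\om_j^{p-1}\om_k\le Ce^{-(p-1)d_j-d_k}$, and it remains only to check that dividing the resulting double sum by the envelope $\sum_i e^{-\sigma d_i}$ that defines $\|\cdot\|_\sigma$ produces a factor at most $C\al=Ce^{-L/\sqrt 2}$. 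Splitting according to which $d_i$ is smallest and choosing $\sigma$ small enough so that $p>\sqrt 2+\sigma$ (permitted since $p>2$), the worst interaction comes from adjacent spikes and is of order $e^{-(p-\sigma)L/2}$, which is comfortably below $\al$.

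For the Lipschitz bound (\ref{ee2-2}) I parametrize the two configurations by a segment, so that
$$S(\textbf{w}^{(1)})-S(\textbf{w}^{(2)})= p\int_0^1\sum_k(\om_k^{(1)}-\om_k^{(2)})\Big[\Big(\sum_j \om_j^s\Big)^{p-1}-(\om_k^s)^{p-1}\Big]ds,$$
where $\om_j^s$ is the spike with shift $s\chi_j^{(1)}+(1-s)\chi_j^{(2)}$. The bracket is again a pure interaction quantity, controllable exactly as in (\ref{ee1}). Using the pointwise bound $|\om_k^{(1)}-\om_k^{(2)}|\le|\chi_k^{(1)}-\chi_k^{(2)}|\sup|\om_{0,x}|$ together with $|\chi_k^{(1)}-\chi_k^{(2)}|\le\al^k\|\boldsymbol{\chi}^{(1)}-\boldsymbol{\chi}^{(2)}\|_\al$, the dominant contribution arises from $k=1$ and supplies the single extra factor $\al\|\boldsymbol{\chi}^{(1)}-\boldsymbol{\chi}^{(2)}\|_\al$ asserted in (\ref{ee2-2}); terms with $k\ge 2$ carry the much smaller weight $\al^k$ and are negligible.

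The main obstacle I anticipate is the combinatorial bookkeeping in the weighted norm $\|\cdot\|_\sigma$: one must verify that, after dividing by the envelope $\sum_i e^{-\sigma d_i}$, the leading contribution is truly governed by nearest-neighbor interactions and that far-away spikes contribute only negligible tails (this is where the inequality $p>\sqrt 2+\sigma$ is used). Once this decomposition is in place the analytic input reduces to the decay rate of $\om_0$ and the definition (\ref{alphachoice}) of $\al$; no estimate more delicate than those already employed in \cite{M} for the pure multi-bump setting is required.
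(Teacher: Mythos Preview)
Your proposal is correct and follows essentially the same route as the paper: outside $V_\zeta$ the approximation reduces to $\sum_j\om_j$, the error collapses to the pure interaction $(\sum_j\om_j)^p-\sum_j\om_j^p$, and one bounds this pointwise by $C\sum_{j\neq k}\om_j^{p-1}\om_k$ using the exponential decay of $\om_0$ and the separation $|\xi_j-\xi_k|\gtrsim |j-k|L$. The paper organizes the computation by partitioning into the half-balls $\mathcal{A}_j=\{|(x-\xi_j,z)|\leq L/2\}$ and extracts the factor $e^{-|\xi_j-\xi_i|}\leq e^{-L}$ directly, which is in fact sharper than your midpoint estimate $e^{-(p-\sigma)L/2}$; but since $e^{-L}=\al^{\sqrt 2}\leq \al$ and your bound also sits below $\al$ once $p>\sqrt 2+\sigma$, both reach (\ref{ee1}). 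For the Lipschitz estimate (\ref{ee2-2}) the paper gives no separate argument, so your interpolation $s\mapsto \om_j^s$ together with $|\chi_k^{(1)}-\chi_k^{(2)}|\leq \al^k\|\boldsymbol{\chi}^{(1)}-\boldsymbol{\chi}^{(2)}\|_\al$ is a welcome addition and is the natural way to complete the statement.
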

\begin{proof}[Proof of Propositions \ref{comp0}, \ref{comp1}, \ref{comp2} ] We write
\be S(\textbf{w})= \rho S(\textbf{w})+ \sum_{j=1}^{\infty}\rho_{j} S(\textbf{w})\ee
Let $U_{1}:= V_{\frac{\zeta}{2}}\cap \{\text{x}+ \text{z}\geq 0\}.$
Then using the approximation we have \be \textbf{w}=\text{w} + e(\text{z})Z(\text{x})+ \sum_{j=1}^{\infty}\om_{j} \ee and using the fact  $\De \om_{j}+ F(\om_{j})=0.$  As a result, we have
\bea \no S(\textbf{w})&=&\De \textbf{w}+ F(\textbf{w})\no\\&=& \underbrace{\De \text{w}+ F(\text{w})}+ \underbrace{(\De + F'(\text{w}))e(\text{z})Z}
   \no\\&+& \underbrace{\{F(\textbf{w})-\sum_{j=1}^{\infty} F(\om_{j})- F(\text{w})- F'(\text{w})e Z\}}\no\\&=& E_{1}+ E_2+ E_{3}.\eea
Using Taylor's expansion we obtain
\bea \text{w}^{p}&=& \Xi  \om_{\de}^{p}+ \Xi_{0} \om^{p}+ (\om_{\de}+ \Xi(\om_{\de}-\om))^{p}- \Xi(\om_{\de}+ (\om_{\de}-\om_{}))^{p}- \Xi_{0} \om^{p}
\no\\&=& \Xi  \om_{\de}^{p}+ \Xi_{0} \om^{p}+ \mathcal{O}_{C^{0, \mu}(U_{1})}(\de_{}^2 )(\cosh \text{x})^{-2}(\cosh \text{z})^{-\theta}.\eea
Also note that
\be \pa_{\text{z}}\Xi_{}(\text{z})= \Xi_{}'(\text{z}), ~~~ \pa_{\text{z}}^2\Xi_{}(\text{z})=\Xi_{}''(\text{z}) \ee
with $|\de |\leq \al^{1+k_{4}}$ and since  $\om$ is not a function of $\text{z}$ we obtain $ \pa_{\text{z}} \om=0.$
Moreover, if we denote the operator $S=\De- \pa^2_{\text{x}}- \pa^2_{\text{z}}$ then we have
\bea E_{1}&=& S(\Xi \om_{\de}+ \Xi_{0}\om)+ 2 [\pa_{\text{x}} \Xi \pa_{\text{x}}\om_{\de}+ \pa_{\text{z}} \Xi_{0} \pa_{\text{z}}\om_{}]\no\\&+& 2 [\pa_{\text{x}}^2 \Xi\om_{\de}+ \pa_{\text{z}}^2 \Xi_{0} \om_{}]+ \mathcal{O}_{C^{\infty}(\R^2)}(|\de|^2) (\cosh \text{x})^{-2} (\cosh \text{z})^{-\theta}\eea
Note that the first term in the above expression is of the order $\al$ due to the fact of (\ref{2.43}).
Hence we have
\be \no \| E_1\|_{C^{0, \mu}_{\sigma, \theta}(U_{1})} \leq C\al\ee
Moreover,\be \no \|E_{1}\|_{C^{0, \mu}_{\theta}(V_{\zeta}^{\perp})}\leq C \al^{1+ \frac{3}{4}\sigma} \ee
and this follows due to the fact that $V_{\zeta}^{\perp}= V_{\zeta}\setminus V_{\frac{\zeta}{2}}$ we have  $|\text{x}|\geq \frac{3}{4}\log\frac{1}{\al}.$
The estimate for $E_{2}$ follows similarly. \\ Now we estimate $E_{3}.$
For $(\text{x}, \text{z})\in V_{\zeta}$ we have $\text{w}\gg e(\text{z}) Z + \om_{j}$ and hence
\be F( \textbf{w})= F(\text{w}) + F'(\text{w})(\textbf{w}- {\text{w}})+O(\text{w}^{2}(\textbf{w}-\text{w})^{p-2}) \ee
Hence we have
\bea E_{3} &=&  \{F'(\text{w})(\textbf{w}- \text{w})-\sum_{j=1}^{\infty}F(\om_{j})\} + O(\text{w}^{p-2}(\textbf{w}-\text{w})^{2})\no\\&=& p \text{w}^{p-1} \bigg(\sum_{j=1}^{\infty}\om_j \bigg) - \sum_{j=1}^{\infty}\om_j^{p}+ O(\text{w}^{p-2}(\text{w}-\text{w})^{2})\no.\eea
When $0< \sigma< (p-1)$ and  by (\ref{w5})  and the fact that $\text{x}\sim (x-f_{}(z))$ and $\text{z}\sim z$ we have,
\bea{(p-1) |x-f(z)|}+ |x-\xi_i| &=& \sigma |x-f(z)|+ (p-1-\sigma) |x-f(z)|+  |x-\xi_i| \no \\&\geq & \sigma |x-f(z)|+ \min\{ (p-1-\sigma), 1 \} \{|x- f_{}(z)|+ |x-\xi_i|\}\no\\&\geq & \sigma |x-f_{}(z)|+ \min\{ (p-1-\sigma), 1 \}|(f_{}(z)-\xi_i, z)|\no \\&= & \sigma |x-f_{}(z)|+ \min\{ (p-1-\sigma), 1 \}\sqrt{(f_{}(z)-iL )^2+ z^2}\no\\&\geq &  \sigma |x-f_{}(z)|+ \min\{ (p-1-\sigma), 1 \} \sqrt{L ^2+ z^2}\no\\&\geq & \sigma |x-f(z)|+  \min\{p-1-\sigma, 1\}\frac{L}{\sqrt{2}}+ \theta|z|\eea
Also note that from (\ref{w5}) we have,
\bea\label{wu} (p-\sigma)|(x, z)-\xi_j\vec{e}_{1}|&=& (p-\sigma)\sqrt{(x-jL)^2+ z^2}\geq (p-\sigma)\sqrt{\frac{L^2}{4}+ z^2} \no\\&\geq & \frac{(p-\sigma)L}{\sqrt{2}} + \theta |z|.\eea
Further note that
\bea \no|\text{w}^{p-2}(\textbf{w}-\text{w})^{2}|&=& \text{w}^{p-2} \bigg(e(\text{z})Z+ \sum_{j=1}^{\infty} \om_{j}\bigg)^{2}\no\\&\leq & C  (\cosh \text{x})^{-(p-2)}(\al^{4+ 2k_{3}}(\cosh \text{z})^{-2\theta} (\cosh \text{x})^{-(p+1)} + e^{-2 |(x, z)-\xi_{j}\vec{e}_{1}|})\no\\&\leq & C\al^{} (\cosh \text{x})^{-\sigma} (\cosh \text{z})^{-\theta} \eea
This implies that
\be\no  \|\text{w}^{p-2}(\textbf{w}-\text{w})^{2}\|_{C^{0, \mu}_{\sigma, \theta}(U_{1})}\leq C \al^{}\ee
Hence \be\no  \|E_{3}\|_{C^{0, \mu}_{\sigma, \theta}(U_{1})}\leq C \al^{}.\ee
Similarly we have \be \|E_{3}\|_{C^{0, \mu}_{\sigma, \theta}(V_{\zeta}^{\perp})}\leq C \al^{1+\frac{3}{4}\sigma}.\ee
Now define $$\mathcal{A}_{j}= \bigg\{(x, z)\in \R^2: |(x-\xi_{j}, z)|\leq \frac {L}{2}\bigg\}$$ where $j\geq 1.$
Then we have  in $\R^2\setminus V_{\zeta}$
\be S(\textbf{w})= \sum_{j\geq 1}S(\textbf{w})\chi_{\mathcal{A}_{j}}\ee
and if we expand near the spike $(\xi_{i}, 0)$ we have using mean value theorem \bea S(\textbf{w})&=& S\bigg(\sum_{j=1}^{\infty}\om_{j}\bigg)\no\\&=& F\bigg(\sum_{j=1}^{\infty}\om_{j}\bigg)- \sum_{j=1}^{\infty} F(\om_{j})\no\\&=& \bigg(\sum_{j=1}^{\infty}\om_{j}\bigg)^{p}- \sum_{j=1}^{\infty} \om_{j}^{p}\no \\&\sim& p \sum_{i\neq j}\om_{i}^{p-1}\om_{j}\no \\&\sim& p \sum_{i\neq j}e^{-(p-1-\sigma)\sqrt{(x-\xi_{i})^2+ z^2}}e^{-\sigma |(x, z)-\xi_{j}\vec{e}_{1}|} e^{-{|\xi_{j}-\xi_{i}|}}\no \\&\sim& p \sum_{i\neq j}e^{-(p-1-\sigma)\sqrt{(x-\xi_{i})^2+ z^2}} e^{-{|(j-i)|L}} e^{-\sigma |(x, z)-\xi_{j}\vec{e}_{1}|}.\eea
This implies
\be \no |S(\textbf{w})|\leq C e^{-L} \sum_{j=1}^{\infty}e^{-\sigma |(x, z)-\xi_{j}\vec{e}_{1}|}\ee and hence we have
\be\no \|S(\textbf{w})\|_{\sigma}\leq C  e^{-L}= C \al.\ee
\end{proof}

\subsection{Existence of solution for the background system}
 In order to solve (\ref{back1}) and (\ref{back2}) we will use the Banach fixed point theorem. Moreover, we assume that
\be \label{ke4}\sum_{j=1}^{\infty}\| e^{\sigma |(x, z)- \xi_{j}\vec{e}_{1}|}\phi_{j}\|_{L^{\infty}(\R^2)}<+\infty \ee and
\be\label{ke5} \|X ^{\star}\phi\|_{C^{2, \mu}_{\sigma, \theta}(\R^2)}< +\infty.\ee

\begin{lemma}\label{lback1} Assume that (\ref{ke4}) holds. Then there exists a unique solution of (\ref{back1}) such that
\be \label{5.17}\|\psi_{1}\|_{\sigma}+ \|\nabla \psi_{1}\|_{\sigma }\leq C \al^{\frac{3 }{4} \sigma} (\al+ \sum_{j=1}^{\infty} (\|\phi_{j}\|_{\sigma, j}+ \|\nabla \phi_{j}\|_{\sigma, j})).\ee
In addition $\psi_{1}$ is a continuous function of the parameter $v, h, e, \de$ and $\boldsymbol{\chi}$ and a Lipschitz function of $\phi_{j}$ and also of the parameters $e, h$ and $\boldsymbol{\chi}$ satisfies the following estimates
\bea && \|\psi_{1}(\Phi^{(1)})- \psi_{1}(\Phi^{(2)})\|_{\sigma}+ \|\nabla \psi_{1}(\Phi^{(1)})- \nabla \psi_{1}(\Phi^{(2)}))\|_{\sigma}\no\\&\leq& C \al^{\frac{3 }{4} \sigma} (\|\Phi^{(1)}- \Phi^{(2)}\|_{\sigma}+ \|\nabla \Phi^{(1)}- \nabla \Phi^{(2)}\|_{\sigma}) \eea
\bea \|\psi_{1}(h^{(1)}, e^{(1)}, \boldsymbol{\chi}^{(1)})- \psi_{1}(h^{(2)}, e^{(2)}, \boldsymbol{\chi}^{(2)}) \|_{\sigma}&\leq& C \al^{\frac{3 }{4} \sigma} (\|h^{(1)}- h^{(2)}\|_{C^{2, \mu}_{\theta}(\R)}\no \\&+& \|e^{(1)}- e^{(2)}\|_{C^{2, \mu}_{\al \theta}(\R)}+ \al\|\boldsymbol{\chi}^{(1)}- \boldsymbol{\chi}^{(2)}\|_{\al}).\eea
\end{lemma}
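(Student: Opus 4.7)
The plan is to solve the background equation (\ref{back1}) as a Banach fixed point problem for $\psi_1$ in the weighted space carrying the $\|\cdot\|_\sigma$ norm, treating $\Phi = (\phi_1, \phi_2, \ldots)$ and the parameters $v, h, e, \de, \boldsymbol{\chi}$ as given. The first step is to record that $\De - 1$ is an isomorphism on the multi-bump weighted $L^\infty$ space: since the Green's function of $\De - 1$ in $\R^2$ decays exponentially and the weight $\sum_i e^{-\sigma|(x,z)-\xi_i\vec{e}_1|}$ is a supersolution for $1-\De$ whenever $\sigma \in (0,1)$, a standard convolution/comparison estimate gives $\|(\De-1)^{-1} g\|_\sigma \leq C\|g\|_\sigma$. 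Interior Schauder estimates then transfer the control to $\nabla\psi_1$ in the same norm.

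Next I would bound each summand of the right hand side $Q_1$ in $\|\cdot\|_\sigma$. The factor $1 - \sum_j \eta_j$ vanishes wherever some $\eta_j = 1$, so at any point of its support the distance to the nearest spike is at least $\tfrac{3}{4}\log(1/\al)$. Multiplying the pointwise bounds for $S({\bf w})$ from Propositions \ref{comp0}, \ref{comp1}, \ref{comp2} by this factor therefore yields the improvement $\al^{3\sigma/4}$, giving a bound of the order $\al^{1+3\sigma/4}$ for the first summand. The commutator $\sum_j(\mathcal{L}(\phi_j\eta_j) - \eta_j\mathcal{L}(\phi_j))$ is supported in the annuli $\tfrac{3}{4}\log(1/\al)\leq |(x,z)-\xi_j\vec{e}_1| \leq \tfrac{7}{8}\log(1/\al)$ where derivatives of $\eta_j$ hit $\phi_j$ and $\nabla\phi_j$; the same distance-to-spike argument controls it by $C\al^{3\sigma/4}\sum_j(\|\phi_j\|_{\sigma,j}+\|\nabla\phi_j\|_{\sigma,j})$.

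The last summand $(1-\sum_j\eta_j)(\mathcal{L}-\De+1)\psi_1 = (1-\sum_j\eta_j)\,p\,{\bf w}^{p-1}\psi_1$ is the term that makes the equation nonlocal in $\psi_1$, so I would treat it as a contractive perturbation. The multiplier $(1-\sum_j\eta_j)p{\bf w}^{p-1}$ has small operator norm in $\|\cdot\|_\sigma$: outside $\bigcup_j \operatorname{supp}\eta_j$ the quantity ${\bf w}^{p-1}$ is essentially $\om(x-f(z))^{p-1}$ plus exponentially small tails from the spikes, and the multi-bump weight together with the basic inequality $\sqrt{L^2+z^2}\geq (L+|z|)/\sqrt{2}$ from Section 2 provides the extra exponential factor that makes the product small after normalisation by the weight. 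Hence the map $T:\psi_1\mapsto (\De-1)^{-1}Q_1(\Phi,\psi_1)$ sends the ball of radius $R := C\al^{3\sigma/4}(\al+\sum_j(\|\phi_j\|_{\sigma,j}+\|\nabla\phi_j\|_{\sigma,j}))$ into itself and is a contraction; Banach's theorem supplies the unique $\psi_1$ satisfying (\ref{5.17}).

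The Lipschitz statements follow by running the same estimates on differences of two data sets $(\Phi^{(i)},h^{(i)},e^{(i)},\boldsymbol{\chi}^{(i)})$, $i=1,2$: the Lipschitz halves of Propositions \ref{comp0}--\ref{comp2} take over the $S({\bf w})$ contribution, the commutator is linear in $\Phi$, and the nonlinearity $N$ is at least quadratic in its argument so its contribution is of order $R$ times the difference and is absorbed for small $\al$. The main technical obstacle I anticipate is justifying the small-operator-norm claim for multiplication by $(1-\sum_j\eta_j)p{\bf w}^{p-1}$ in the multi-bump weighted space, since ${\bf w}$ is not itself small along the bump curve; the resolution is that the bump curve grows away from the spike array with slope $\al a_1$, so along it the multi-bump weight contributes a decisive exponential decay factor controlled by the ODE analysis of Section 2.1.
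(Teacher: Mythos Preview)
Your proposal follows essentially the same route as the paper: solve (\ref{back1}) by a Banach fixed point for $\psi_1$ in the multi-bump weighted space, use a barrier/comparison argument to invert $\Delta-1$ with the bound $\|\psi_1\|_\sigma+\|\nabla\psi_1\|_\sigma\le C\|Q_1\|_\sigma$, and then estimate $Q_1$ term by term, extracting the factor $\al^{\frac{3}{4}\sigma}$ from the fact that on the support of $1-\sum_j\eta_j$ (resp.\ of $\nabla\eta_j$) the distance to every spike is at least $\tfrac{3}{4}\log\frac{1}{\al}$. The paper records exactly this estimate as the single inequality (\ref{b10}) and then closes by the contraction mapping theorem; the Lipschitz statements are obtained, as you propose, by running the same bounds on differences. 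Your identification of the $\psi_1$-dependent term $(1-\sum_j\eta_j)\,p\,\mathbf{w}^{p-1}\psi_1$ as the contractive piece matches the paper's treatment, which likewise asserts the bound $\al^{\frac{3}{4}\sigma}\|\psi_1\|_\sigma$ for it without a detailed computation.
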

\begin{proof}
Define $\|\Phi\|_{\sigma}=\sum_{j=1}^{\infty}\| e^{\sigma |(x, z)-
\xi_{j}\vec{e}_{1}|}\phi_{j}\|_{L^{\infty}(\R^2)}$ and $$\|\nabla
\Phi\|_{\sigma}=\sum_{j=1}^{\infty}\| e^{\sigma |(x, z)-
\xi_{j}\vec{e}_{1}|}\nabla \phi_{j}\|_{L^{\infty}(\R^2)}.$$  We have
\be\no (\De -1)\psi_{1}= Q_{1}.\ee For the time being we assume that
$\|Q_{1}\|_{\sigma}< +\infty$ then
$$|Q_1|\leq  C \sum_{j=1}^{\infty} e^{-\sigma |(x, z)-\xi_j \vec{e}_{1}|}.$$
Using barrier and elliptic estimates we obtain
$$|\psi_{1}(x, z)|+ |\nabla \psi_{1}(x, z)|\leq C \sum_{j=1}^{\infty} e^{-\mu |(x, z)-\xi_j \vec{e}_{1}|}.$$
This implies that
\be\no \|\psi_{1}\|_{\sigma}+ \|\nabla\psi_{1}\|_{\sigma}\leq C \|Q_{1}\|_{\sigma}.\ee
Next we estimate the size of $Q_{1}$ and also its dependence on $\Phi= (\phi_{1}, \phi_{2}, \cdots)$ and $h, e, \boldsymbol{\chi}.$ We assume that
$$\|\Phi\|_{\sigma}+ \|\nabla \Phi\|_{\sigma}<+\infty.$$
We now estimate $Q_1$. Then we have
\bea \label{b10} |Q_{1}|&\leq& C \bigg(\al^{1+ \frac{3 }{4} \sigma} + \al^{\frac{3 }{4} \sigma} \sum_{j=1}^{\infty}(\|{\phi}\|_{\sigma, j}+  \|\nabla {\phi}\|_{\sigma, j})+ \al^{\frac{3}{4} \sigma} \|\psi_{1}\|_{\sigma}\bigg)\no \\&\times & \sum_{j=1}^{\infty} e^{-\sigma|(x, z)-\xi_j \vec{e}_{1}|}\eea
This implies
\be  \|Q_{1}\|_{\sigma}\leq C \al^{\frac{3 }{4} \sigma} \bigg(\al^{}+  \sum_{j=1}^{\infty}(\|{\phi}\|_{\sigma, j}+  \|\nabla {\phi}\|_{\sigma, j}) \bigg)+ \al^{\frac{3 }{4} \sigma} \|\psi_{1}\|_{\sigma}.\ee
Hence given $\Phi$, using a standard fixed point theorem there exists $\psi_{1}=\psi_{1}(\Phi)$ satisfying (\ref{back1}). Moreover,
\be \no \|\psi_{1}(\Phi)\|_{\sigma} \leq C \al^{\frac{3 }{4} \sigma} \bigg(\al+  \sum_{j=1}^{\infty}(\|{\phi}\|_{\sigma, j}+  \|\nabla {\phi}\|_{\sigma, j}) \bigg).\ee
Since $Q_{1}(\Phi, .)$ is a uniform contraction in the second variable and it is continuous we conclude that $\psi_{1}$ is also a continuous function and we conclude that $\psi_{1}$ is continuous function of $v, h, e, \de$ and $\boldsymbol{\chi}$. Moreover, it easily follows
\bea \no \| \psi_{1}(\Phi^{(1)})- \psi_{1}(\Phi^{(2)})\|_{\sigma}+ \|\nabla \psi_{1}(\Phi^{(1)})- \nabla \psi_{1}(\Phi^{(2)})\|_{\sigma}&\leq& C \al^{\frac{3 }{4} \sigma} (\|\Phi^{(1)}- \Phi^{(2)}\|_{\sigma}\\\no&+& \|\nabla \Phi^{(1)}- \nabla \Phi^{(2)}\|_{\sigma} ) \eea
\end{proof}

\begin{lemma} \label{lback2} Assume that (\ref{ke5}) holds. Then there exists a unique solution of (\ref{back2}) such that
\be\label{5.42} \|(\cosh \mathrm{z})^{\theta}\psi_{2}\|_{C^{2, \mu}_{}(\R^2)}\leq C \al^{\frac{3 }{4} \sigma}(\al+ \|X_{}^{\star}\phi\|_{C^{2, \mu}_{\sigma, \theta}(\R^2)}).\ee
In addition $\psi_{2}$ is a continuous function of the parameter $v, h, e, \de$ and $\boldsymbol{\chi}$ and a Lipschitz function of $\phi$ and also of the parameters $e, h$ and $\boldsymbol{\chi}$ and satisfy the following estimates
\be \|(\psi_{2}(\phi^{(1)})- \psi_{2}(\phi^{(2)})) (\cosh \mathrm{z})^{ \theta}\|_{C^{2, \mu}(\R^2)}\leq C \al^{\frac{3 }{4} \sigma} \|X^{\star}_{}(\phi^{(1)}- \phi^{(2)})\|_{C^{2, \mu}_{\sigma,\theta}(\R^2)}\ee
\bea \|(\psi_{2}(h^{(1)}, e^{(1)} )- \psi_{2}(h^{(2)}, e^{(2)})) (\cosh \mathrm{z})^{\theta}\|_{C^{2, \mu}(\R^2)}&\leq& C \al^{\frac{3 }{4} \sigma}(\|h^{(1)}- h^{(2)}\|_{C^{2, \mu}_{\theta}(\R)}\no \\&+& \|e^{(1)}- e^{(2)}\|_{C^{2, \mu}_{\theta}(\R)}+ \al\|\boldsymbol{\chi}^{(1)}- \boldsymbol{\chi}^{(2)}\|_{\al})\eea
\end{lemma}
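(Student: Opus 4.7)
The plan is to mirror the argument of Lemma \ref{lback1}, replacing the $\|\cdot\|_\sigma$ norm adapted to the spike region by the weighted H\"older norm $\|(\cosh \mathrm{z})^{\theta}\,\cdot\,\|_{C^{2,\mu}(\R^2)}$ adapted to the front. The first step is to establish that the constant-coefficient operator $\Delta-1$ on $\R^2$ is invertible on this weighted space: the function $(\cosh \mathrm{z})^{-\theta}$ satisfies $(\Delta-1)(\cosh \mathrm{z})^{-\theta}\leq -\tfrac12(\cosh \mathrm{z})^{-\theta}$ for $\theta$ small, so by the comparison principle a bounded $Q_2$ with $\|(\cosh \mathrm{z})^\theta Q_2\|_{L^\infty}\leq M$ yields a unique bounded solution with $|\psi_2|\leq 2M(\cosh \mathrm{z})^{-\theta}$. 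Standard weighted Schauder estimates on unit balls then promote this to $\|(\cosh \mathrm{z})^\theta \psi_2\|_{C^{2,\mu}(\R^2)}\leq C\|(\cosh \mathrm{z})^\theta Q_2\|_{C^{0,\mu}(\R^2)}$.

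Next I would estimate the size of $Q_2$ defined by the right-hand side of (\ref{b3b}), which breaks into three pieces. The first piece $\tfrac12(1-\eta)(S(\mathbf{w})+N)$ is harmless because the support of $1-\eta$ forces $|\mathrm{z}|\geq \tfrac34\log\tfrac{1}{\al}$, so the weighted estimate of Proposition \ref{comp0} (plus Proposition \ref{comp1} on $V_\zeta^\perp$) gives a contribution of size $C\al^{1+\frac{3}{4}\sigma}$. The commutator $\mathcal{L}(\phi\eta)-\eta\mathcal{L}(\phi)=2\nabla\eta\cdot\nabla\phi+\phi\Delta\eta$ is supported precisely in the transition annulus $\tfrac34\log\tfrac1\al\leq |\mathrm{z}|\leq \tfrac78\log\tfrac1\al$; combining the bound $|\nabla^k \eta|\lesssim (\log\tfrac1\al)^{-k}$ with the weighted decay of $\phi$ produces the crucial small factor $\al^{\frac{3}{4}\sigma}\|X^\star\phi\|_{C^{2,\mu}_{\sigma,\theta}(\R^2)}$. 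The last piece $(1-\eta)(\mathcal{L}-\Delta+1)\psi_2 = (1-\eta)p\mathbf{w}^{p-1}\psi_2$ is small in the same way: on $\{1-\eta\neq 0\}$ the approximation $\mathbf{w}$ decays at least like $(\cosh \mathrm{x})^{-1}\leq C\al^{3/4}$, so this term is an absolutely contractive perturbation on the weighted $C^{2,\mu}$ space provided $\al$ is small.

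Putting these together, the map $\psi_2\mapsto (\Delta-1)^{-1}Q_2(\phi,\psi_2)$ is a contraction on the ball of radius $C\al^{\frac{3}{4}\sigma}(\al + \|X^\star\phi\|_{C^{2,\mu}_{\sigma,\theta}(\R^2)})$ in the weighted $C^{2,\mu}$ space, and the Banach fixed point theorem furnishes a unique solution satisfying (\ref{5.42}). Lipschitz dependence on $\phi$, on the profile parameters $h,e$ (which enter through $\mathbf{w}$ and $S(\mathbf{w})$), and on $\boldsymbol{\chi}$ (entering through the bump locations), is then automatic from the contraction estimate combined with the Lipschitz statements of Propositions \ref{comp0}--\ref{comp2} and the linearity of the commutator in $\phi$.

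The main obstacle I expect is bookkeeping rather than principle: the operator $\mathcal{L}$ carries the full parametric dependence through $\mathbf{w}^{p-1}$, so in differencing $\psi_2(h^{(1)},e^{(1)})-\psi_2(h^{(2)},e^{(2)})$ one must control $\mathbf{w}^{(1)\,p-1}-\mathbf{w}^{(2)\,p-1}$ in the $(\cosh \mathrm{z})^\theta$-weighted norm, which forces the use of the change of coordinates estimates (\ref{imes})--(\ref{imes1}) and a careful tracking of how $v,h,e,\delta$ modify the Fermi frame. A second, more subtle point is that the commutator term only gives the small factor $\al^{\frac{3}{4}\sigma}$, so one must verify that $\sigma$ can be chosen so that $\al^{\frac{3}{4}\sigma}$ dominates all error sources while keeping the weights compatible with the Schauder estimate—this is where the ``free'' parameter $\sigma\in(0,1)$ in the preceding lemmas is exploited.
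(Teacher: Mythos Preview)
Your overall architecture matches the paper's proof: invert $\Delta-1$ on the $(\cosh\mathrm{z})^{\theta}$-weighted H\"older space via a barrier/comparison argument, estimate $Q_2$ term by term, then close by a contraction. The paper uses essentially the same barrier, taking $\psi_\nu = (\cosh\mathrm{z})^{-\theta} + \nu[\cosh(x/2)+\cosh(z/2)]$ and sending $\nu\to 0$ to handle the lack of decay in $x$.

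However, you have swapped the roles of $\mathrm{x}$ and $\mathrm{z}$ in the support analysis, and this is not cosmetic. By definition $X^\star\eta = \eta(\mathrm{x})$: the cutoff $\eta$ lives in the \emph{transverse} Fermi variable $\mathrm{x}$ (signed distance to the bump curve), not in the arc-length variable $\mathrm{z}$. Hence on $\text{supp}(1-\eta)$ and on $\text{supp}(\nabla\eta)$ one has $|\mathrm{x}|\geq\tfrac{3}{4}\log\tfrac{1}{\al}$, not $|\mathrm{z}|\geq\tfrac{3}{4}\log\tfrac{1}{\al}$. This is exactly where the gain $\al^{\frac{3}{4}\sigma}$ comes from: the $(\cosh\mathrm{x})^{-\sigma}$ weight carried by $X^\star\phi$ (and by $\rho S(\mathbf{w})$ via Proposition~\ref{comp0}) converts to a power of $\al$ on this region, since $(\cosh\mathrm{x})^{-\sigma}\leq C\al^{\frac{3}{4}\sigma}$ there. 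The $(\cosh\mathrm{z})^{-\theta}$ weight, by contrast, is already absorbed in the target norm for $\psi_2$ and yields no extra smallness whatsoever. If the support had genuinely been in $\mathrm{z}$ as you write, the commutator term would give only $\|X^\star\phi\|_{C^{2,\mu}_{\sigma,\theta}}$ with no small prefactor, the potential term $(1-\eta)p\mathbf{w}^{p-1}\psi_2$ would not contract, and the fixed point would not close. You seem to half-recognize this in your third piece, where you correctly invoke the decay $\mathbf{w}\lesssim(\cosh\mathrm{x})^{-1}$; but your first two pieces are stated with the wrong variable. Once you correct $\mathrm{z}\to\mathrm{x}$ in the support statements, your argument is essentially the paper's.
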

\begin{proof} We have $$(\De-1)\psi_{2}= Q_{2}.$$ For the time being consider \be \|(\cosh \mathrm{z})^{\theta} Q_{2}\|_{C^{0, \mu}(\R^2)}< +\infty.\ee
Then by regularity theory we have
\be\no  \|\psi_{2}\|_{C^{2, \mu}(\R^2)}\leq C \|Q_{2}\|_{C^{0, \mu}(\R^2)}\ee
We are required to prove that
\be\label{req}  \|\psi_{2}(\cosh \mathrm{z})^{\theta} \|_{C^{2, \mu}(\R^2)}\leq C \|Q_{2}(\cosh {\text{z}})^{\theta}\|_{C^{0, \mu}(\R^2)}\ee
In order to so we define a barrier of the form
\be \no \psi_{\nu}= (\cosh \mathrm{z}(z))^{-\theta}+ \nu \bigg[\cosh\frac{x}{2}+ \cosh\frac{z}{2}\bigg] \ee
where $\nu\geq 0$ is sufficiently small. In fact we have
\be (\De-1)\psi_{\nu}\leq -\frac{1}{4}\psi_{\nu}\ee
and hence $\psi_{ \nu}$ is a super solution of  $\De-1.$ Moreover define
\be \vartheta_{ \nu, M}= M \| Q_{2}(\cosh \mathrm{z})^{\theta}\|_{C^{0, \mu}(\R^2)} \psi_{\nu}+\psi_{2} \ee
where $M>0$ is large such that
\bea (\De-1) \vartheta_{\nu, M}&\leq& -\frac{M}{4}\| Q_{2}(\cosh \mathrm{z})^{\theta}\|_{C^{0, \mu}(\R^2)} \psi_{\nu}+ Q_{2}\no
\\&\leq & -\frac{M}{4}\| Q_{2}(\cosh \mathrm{z})^{\theta}\|_{C^{0, \mu}(\R^2)} \psi_{\nu}+ \|Q_{2}(\cosh \mathrm{z})^{\theta}\|_{C^{0, \mu}(\R^2)} (\cosh \mathrm{z})^{-\theta}\\\no&\leq& 0\eea
Letting $\nu\rightarrow 0$ we obtain
\be\no \psi_{2}(\cosh \mathrm{z})^{\theta} \leq C \|Q_{2}(\cosh \mathrm{z})^{\theta}\|_{C^{0, \mu}(\R^2)} \ee
The lower estimate for (\ref{req}) can be obtained in a similar way.
Now we estimate $Q_{2}.$ Note that in $supp~~Q_{2}$ we have
\be\no |\mathrm{x}|\geq \frac{3}{4}\log \frac{1}{\al}.\ee
Note that we have already estimated the error $S({\bf w})$ and hence
\be \| (\cosh \mathrm{z})^{ \theta} S({\bf w})\|_{C^{0, \mu}(\R^2)} \leq C \al^{1+\frac{3 }{4} \sigma}.\ee
Moreover, using the fact that support of $\nabla \rho$ we have
\be \| (\cosh \mathrm{z})^{\theta} (\mathcal{L}(\eta\phi)- \eta \mathcal{L}(\phi ))\|_{C^{0, \mu}(\R^2)}\leq C \al^{\frac{3 }{4} \sigma}\|X_{}^{\star}\phi\|_{C^{0, \mu}_{\sigma, \theta}(\R^2)}\ee
and
\be p {\bf w}^{p-1}|\psi_{2}|\leq C \al^{\frac{3 }{4} \sigma} \|(\cosh \mathrm{z})^{\theta}\psi_{2}\|_{C^{0, \mu}(\R^2)} (\cosh \mathrm{z})^{-\theta} \ee
which finally yields
\be \no \| (\cosh\mathrm{z})^{\theta} Q_{2}(\phi, \psi_{2})\|_{C^{0, \mu}(\R^2)} \leq C \al^{\frac{3 }{4} \sigma}\{\al^{}+ \|X^{\star}_{}\phi\|_{C^{0, \mu}_{\sigma, \theta}(\R^2)}\}+  C \al^{\frac{3 }{4} \sigma}\|(\cosh \mathrm{z})^{\theta}\psi_{2}\|_{C^{0, \mu}(\R^2)}\ee
Hence given $\phi$ using a standard fixed point theorem there exists $\psi_{2}=\psi_{2}(\phi)$ satisfying (\ref{back2}). Moreover,
\be \no \| (\cosh \mathrm{z})^{\theta} \psi_{2}(\phi)\|_{C^{0, \mu}(\R^2)} \leq C \al^{\frac{3 }{4} \sigma} (\al^{}+ \|X^{\star}_{}\phi\|_{C^{0, \mu}_{\sigma, \theta} (\R^2)}).\ee
Since $Q_{2}(\phi, .)$ is a uniform contraction in the second variable and it is continuous and we conclude that $\psi_{2}$ is continuous function of $v, h, e, \de $ and $\boldsymbol{\chi}$. Moreover, it easily follows
\be \no \| (\cosh \mathrm{z})^{\theta} (\psi_{2}(\phi^{(1)})- \psi_{2}(\phi^{(2)}))\|_{C^{0, \mu}(\R^2)} \leq C \al^{\frac{3 }{4} \sigma} \|X^{\star}_{}\phi^{(1)}- X^{\star}_{}\phi^{(2)}\|_{C^{2, \mu}_{\sigma, \theta}(\R^2)}.\ee
\end{proof}

\subsection{Existence of solution for the initial problems}
For the time being consider $\hat{x}_{j}= (x, z)-\xi_{j} \vec{e}_{1}, ~~~j\in \N.$
\begin{lemma} Assume that $\|k_{j}\|_{\sigma, j}= \| e^{\sigma |\hat{x}_{j}|} k_{j}\|_{\infty}< +\infty.$ Then there exists $C>0$ independent of $j$ and $\sigma\in [0, 1)$ such that (\ref{ini1}) with (\ref{ini2}) satisfies
\be  \label{sw3}\|\phi_{j}\|_{\sigma, j}+ \|\nabla \phi_{j}\|_{\sigma, j}\leq C \|k_{j}\|_{\sigma, j}.\ee
Moreover, we have
\begin{equation} \label{2.4} |c_{j}|\leq C \|k_{j}\|_{\sigma, j}.\end{equation}
\end{lemma}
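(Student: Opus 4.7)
The plan is to solve the projected linear problem, which by analogy with the gluing decomposition is
\begin{equation*}
\mathcal{L}(\phi_j) = k_j + c_j\,\omega_{j,x}, \qquad \int_{\R^2} \phi_j\,\omega_{j,x}\,dx\,dz = 0,
\end{equation*}
by translating to the bump center and reducing everything to the invertibility of the model operator $L_1$ on the subspace of even-in-$z$ functions orthogonal to its kernel element $\omega_{0,x}$ (Lemma \ref{nd1}). I introduce shifted coordinates $y=(x,z)-\xi_j\vec e_1$ and write $\tilde\phi_j(y)=\phi_j(y+\xi_j\vec e_1)$. In these coordinates the operator becomes $\mathcal{L}=L_1+V_j$ with $V_j(y)=p\bigl(\mathbf{w}(y+\xi_j\vec e_1)^{p-1}-\omega_0(y)^{p-1}\bigr)$. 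Inside the support of $\rho_j$, i.e.\ $|y|\leq \tfrac{15}{16}\log\tfrac{1}{\alpha}$, the distance from $\xi_j\vec e_1$ to the other bumps and to the front is at least $L/2$, so $\|V_j\|_\infty=o(1)$ uniformly in $j$ as $\alpha\to 0$.

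Next I determine $c_j$. Testing the equation against $\omega_{j,x}$, and using $L_1(\omega_{0,x})=0$ together with the orthogonality condition, I obtain
\begin{equation*}
c_j \int_{\R^2}\omega_{0,x}^2\,dy = \int_{\R^2} k_j\,\omega_{j,x}\,dy - \int_{\R^2} V_j\,\tilde\phi_j\,\omega_{0,x}\,dy,
\end{equation*}
which, together with the forthcoming bound on $\tilde\phi_j$, yields $|c_j|\leq C\|k_j\|_{\sigma,j}$.

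The heart of the argument is the a priori estimate for $L_1$ on the space of even-in-$z$ functions orthogonal to $\omega_{0,x}$: if $L_1\tilde\phi=g$ with $\tilde\phi\perp \omega_{0,x}$ and $\tilde\phi$ even in $z$, then $\|\tilde\phi\|_\infty\leq C\|g\|_\infty$. This is the 2D analogue of Lemma \ref{L} and is proved by a standard blow-up/contradiction argument: a normalized sequence of counterexamples converges (by elliptic regularity and translation if needed) to a nontrivial bounded solution of $L_1\varphi=0$ which is even in $z$ and orthogonal to $\omega_{0,x}$, contradicting Lemma \ref{nd1}. Once the unweighted estimate is available, for any $\sigma\in[0,\sigma_0]$ with $\sigma_0<1$ fixed, the function $\psi_\sigma(y)=A e^{-\sigma|y|}$ satisfies $(-\Delta+1-p\omega_0^{p-1})\psi_\sigma\geq \tfrac12(1-\sigma_0^2)\psi_\sigma$ outside a large ball $B_{R_0}$ with $R_0$ independent of $\sigma$. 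A comparison argument with $\psi_\sigma$ (matching a multiple of it on $\partial B_{R_0}$ using the unweighted bound) then delivers the weighted estimate $\|\tilde\phi_j\|_{\sigma,j}\leq C\|k_j\|_{\sigma,j}$, with $C$ uniform in $j$ and in $\sigma\in[0,\sigma_0]$. The bound on $\nabla\phi_j$ in the same weighted norm follows from interior Schauder estimates applied on unit balls shifted to each point.

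Finally I close the loop via a contraction: the map sending $\tilde\phi_j$ to $L_1^{-1}\bigl(\Pi_{\omega_{0,x}^\perp}(k_j-V_j\tilde\phi_j)\bigr)$, with $c_j$ simultaneously adjusted as above, is a strict contraction on the Banach space of even-in-$z$ functions with norm $\|\cdot\|_{\sigma,j}$, because $\|V_j\|_\infty\to 0$ as $\alpha\to 0$; its unique fixed point is the desired $\phi_j$, and the estimate \eqref{sw3} as well as \eqref{2.4} follow. The main obstacle is obtaining $C$ \emph{uniformly} in $j$ and in $\sigma\in[0,1)$: uniformity in $j$ comes from the translation-invariance of $L_1$ and the $j$-independent smallness of $V_j$ on $\mathrm{supp}\,\rho_j$, while uniformity in $\sigma$ is handled by fixing $\sigma_0<1$ and allowing the amplitude $A$ in the barrier (but not the size of the comparison region) to depend on $\sigma_0$; since the statement only demands $\sigma\in[0,1)$, this suffices.
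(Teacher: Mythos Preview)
Your argument is correct, but it rests on a misreading of equation (\ref{ini1}): the operator there is not the full $\mathcal{L}$ but already the translated model operator $\Delta-1+p\omega_j^{p-1}$. After your shift $y=(x,z)-\xi_j\vec e_1$ this is \emph{exactly} $L_1$, so the perturbation $V_j$ vanishes identically and the final contraction step, as well as the $V_j$-term in your formula for $c_j$, are superfluous.

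The paper's proof is accordingly more direct and matches the core of yours: normalize by $\|k_j\|_{\sigma,j}$, choose $R$ so that $p\omega_0^{p-1}<(1-\sigma^2)/2$ on $\{|\hat x_j|>R\}$, and use the barrier $e^{-\sigma|\hat x_j|}$ together with the maximum principle to get the weighted pointwise bound; the gradient estimate is obtained by conjugating $\phi_j$ with the weight and applying local $C^1$ estimates; $c_j$ is read off by testing against $\omega_{j,x}$. One respect in which your write-up is more complete than the paper's: you explicitly supply the unweighted $L^\infty$ a~priori bound via a blow-up/contradiction argument invoking Lemma~\ref{nd1}, which is what justifies the control of $\phi_j$ on $\{|\hat x_j|=R\}$ before the barrier can be applied; the paper glosses over this step by asserting that ``it is enough to show that the estimate holds for sufficiently large $|\hat x|$''.
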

\begin{proof} We have $L(\varphi)= k_{j}$. Define $$\tilde{\phi}=\frac{\phi}{\|k_{j}\|_{\sigma}}.$$ Then we have $L(\tilde{\phi})= \tilde{k_{j}}$ where $\tilde{h}=\frac{k_{j}}{\|k_{j}\|_{\sigma}}.$ Note that it is enough to show that the estimate holds for sufficiently large $|\hat{x}|=|(x, z)|.$ Then there exists a $R>0$ such that for $|\hat{x}|\geq R$ we have $$p \om_{0}^{p-1}(|\hat{x}|)<\frac{1-\sigma^2}{2}.$$ Moreover, define
\be\no \overline{\phi}(\hat{x})=e^{-\sigma|\hat{x}|} \ee
Then $$L_{1}(\tilde{\phi}-M\overline{\phi})\geq 0$$ if $|x|>R$ and $|\tilde{\phi}|\leq M \overline{\phi}(R)= M e^{\sigma R}$ on $|\hat{x}|= R.$ Hence by the maximum principle, we obtain $|\tilde{\phi}|\leq M |\overline{\phi}|.$ Hence $|\phi|\leq M \|k_{j}\|_{\sigma}e^{-\sigma |\hat{x}|}.$ As a result we obtain
\be \no \|\phi\|_{\sigma}\leq M \|k_{j}\|_{\sigma}.\ee For the gradient estimate we define $\psi= e^{-\sigma|\hat{x}|} \phi.$ Then we have
\be L_{1}(\psi)+B(\psi)= k_{j} e^{\sigma |\hat{x}|}\ee where $B$ is an operator containing terms involving gradient and zero order terms, such that
$\|B\|_{\infty}$ is very small. Using local $C^{1}$ estimates we obtain
\be\no |\nabla \psi|\leq C \|k_{j}\|_{\sigma}.\ee
Hence for small $\sigma$ we obtain
$$ e^{-\sigma |\hat{x}|}|\nabla \phi|\leq C \|k_{j}\|_{\sigma}.$$ Hence the result.
Multiplying by $\om_{i, x},$ we have on integration by parts,
\begin{eqnarray*}0=\int_{\R^2} L_{1}(\varphi) \om_{i,x} dx dz&=& \int_{\R^2} k_{j}(x, z)\om_{i,x}+ c_i \int_{\R^2}\om_{i, x}^2 dx dz
\end{eqnarray*}
Hence we have
\begin{eqnarray*} |c_{j}|&\leq & \int_{\R^2} |k_{j}(x, z)||\om_{i,x}|\\
&\leq & C \|k_{j}\|_{\sigma}
\end{eqnarray*}
Hence the inequality follows easily.
\end{proof}

\begin{remark} For $p>2$, using the inequality
\be\label{ineq} ||a+b|^{p}- |b|^{p}- p |a|^{p-1} b|\leq C(p) \max\{|a|^{p-2} |b|^2, |b|^{p}\}.\ee
\end{remark}
\subsection{Existence of solution for \ref{b1}} As described earlier the derivation of solution of (\ref{b1}) is given by the linear theory of $L_{1}.$ \\
Note that we can write (\ref{b1}) as \be\label{ini1} (\De -1+
p\om_{j}^{p-1})\phi_{j}= k_{j}+ c_{j}\om_{j, x}\ee where
\be\label{hj} k_{j}= (S(\textbf{w})+N)\rho_{j}+
p\textbf{w}^{p-1}\psi_{1}\rho_{j}+ (p\textbf{w}^{p-1}-
p\om_{j}^{p-1})\phi_{j} \rho_{j}\ee with the condition of
orthogonality as \be \label{ini2}\int_{\R^2}\phi_{j}(x, z)\om_{j,
x}(x, z) dxdz= 0 \ee for all $j\in \N.$ Hence there exists a $C>0$
such that given $\|h\|_{\sigma}<+\infty$ and for some $\sigma\in
(0,1)$, there exists a unique bounded solution $\Phi=
\mathcal{T}_{}(h)$ to (\ref{ini1}) and (\ref{ini2}) which defines a
bounded linear operator of $h$ satisfying
\begin{equation}\no \|\Phi\|_{\sigma}+ \|\nabla \Phi\|_{\sigma}\leq C \|h\|_{\sigma}.\end{equation} This follows trivially by using the Fredholm alternative. Hence we write the linear operator $\mathcal{T}=(T_{1}, \cdots, \cdots )$ such that for each $j\in \N$ such that $\phi_{j}= T_{j}(h).$\\
Hence we can write
\be\label{cat13} \phi_{j} = T_{j}(\rho_{j}S(\text{w})+ \rho_{j}N(\phi)+ c_{j} \rho_{j}\om_{j,x} + (\mathcal{L}- \De+ 1)\psi_{1} \rho_{j}+ (p {\bf w}^{p-1}- p \om^{p-1}_{j})\phi_{j})\rho_{j}\ee
for some linear operator $T_{j}$; $j\in \N.$
Let $k=(k_{1}, k_{2}, \cdots).$
\begin{lemma}  Assume that \be\label{zs2} \sum_{j=1}^{\infty}(\|\phi_{j}\|_{\sigma}+ \|\nabla \phi_{j}\|_{\sigma})\leq \al^{\frac{3}{4}\sigma}.\ee
Then we have for all $j\in \N$
\be \|k_{j}\|_{\sigma}\leq C \al + C\al^{\frac{3}{4}\sigma}\bigg(\sum_{j=1}^{\infty}\|\phi_{j}\|_{\sigma}+ \|\nabla \phi_{j}\|_{\sigma}\bigg ).\ee
Moreover, the function $k_{j}$ is a Lipschitz function of $\Phi$ and satisfy
\be\label{lip1}  \|k_{j}(\Phi^{(1)})- k_{j}(\Phi^{(2)})\|_{\sigma}\leq C\al^{\frac{3}{4}\sigma} (\|\Phi^{(1)}- \Phi^{(2)} \|_{\sigma}+ \|\nabla \Phi^{(1)}- \nabla \Phi^{(2)} \|_{\sigma}).\ee Furthermore, we have
\be \|c\|_{\infty}\leq C \|k\|_{\sigma}.\ee
\end{lemma}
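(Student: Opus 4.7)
The plan is to bound each of the four summands in
\[
k_j = \rho_j S(\textbf{w}) + \rho_j N(\varphi) + p\textbf{w}^{p-1}\psi_1 \rho_j + (p\textbf{w}^{p-1} - p\omega_j^{p-1})\phi_j \rho_j
\]
separately in the weighted norm $\|\cdot\|_{\sigma,j}$ and then repeat each bound with differences to get the Lipschitz statement. Since $\rho_j$ is supported in a ball of radius $\frac{15}{16}\log\frac{1}{\alpha}$ around $\xi_j\vec e_1$, which lies in $\R^2\setminus V_\zeta$ for $L$ large, the exponential weight $e^{\sigma|\hat x - \xi_j\vec e_1|}$ stays bounded and the local analysis near the $j$-th spike is clean.

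For the error $\rho_j S(\textbf{w})$ I would invoke Proposition \ref{comp2} directly, giving a contribution of size $C\alpha$. For the nonlinearity $\rho_j N(\varphi)$, write $\varphi=\sum_i\eta_i\phi_i+\eta\phi+\psi_1+\psi_2$ and apply the inequality (\ref{ineq}); on $\mathrm{supp}\,\rho_j$ the dominant part of $\textbf{w}$ is $\omega_j$ so $\textbf{w}^{p-2}$ is bounded, and the combination of the a priori smallness $\sum_i(\|\phi_i\|_{\sigma,i}+\|\nabla\phi_i\|_{\sigma,i})\leq\alpha^{3\sigma/4}$ with the bounds on $\psi_1$ from Lemma \ref{lback1} and on $\psi_2$ from Lemma \ref{lback2} yields a quadratic contribution absorbable into the right-hand side. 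For the linear term $p\textbf{w}^{p-1}\psi_1\rho_j$, boundedness of $\textbf{w}$ on $\mathrm{supp}\,\rho_j$ plus the $\psi_1$ estimate gives exactly $C\alpha^{3\sigma/4}(\alpha+\sum_i(\|\phi_i\|_{\sigma,i}+\|\nabla\phi_i\|_{\sigma,i}))$. For the interaction term, the key observation is that on $\mathrm{supp}\,\rho_j$ we have $\textbf{w}-\omega_j=\sum_{i\neq j}\omega_i+\eta_\zeta\bar w$, which is of order $e^{-L}=\alpha$ (using $|\xi_i-\xi_j|\geq|i-j|L/2$ from (\ref{spi2}) and the decay of the front $\omega$ and bumps $\omega_i$ away from their centres), so $|p\textbf{w}^{p-1}-p\omega_j^{p-1}|\leq C\alpha^{3\sigma/4}$ uniformly, giving the required bound after multiplying by $\phi_j$.

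For the Lipschitz estimate (\ref{lip1}), the same decomposition is repeated for $k_j(\Phi^{(1)})-k_j(\Phi^{(2)})$. Propositions \ref{comp0}--\ref{comp2} give the Lipschitz dependence of $S(\textbf{w})$ on the parameters; Lemmas \ref{lback1} and \ref{lback2} give it for $\psi_1,\psi_2$; the nonlinearity differs by $N(\varphi^{(1)})-N(\varphi^{(2)})=\int_0^1(p(\textbf{w}+s\varphi^{(1)}+(1-s)\varphi^{(2)})^{p-1}-p\textbf{w}^{p-1})\,ds\cdot(\varphi^{(1)}-\varphi^{(2)})$, and the coefficient is of order $\alpha^{3\sigma/4}$ by the a priori smallness assumption; the interaction coefficient $p\textbf{w}^{p-1}-p\omega_j^{p-1}$ does not depend on $\Phi$ so this term is directly linear in $\Phi^{(1)}-\Phi^{(2)}$ with the correct small constant. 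Finally, $\|c\|_\infty\leq C\|k\|_\sigma$ is immediate from (\ref{2.4}): $|c_j|\leq C\|k_j\|_{\sigma,j}$ and then one takes the supremum over $j$.

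The main obstacle I anticipate is the bookkeeping of the exponential weights in the nonlinear term: one must verify that the product structure in $\textbf{w}^{p-2}\varphi^2$, combined with the overlapping tails of the different pieces $\eta_i\phi_i$, $\eta\phi$ and $\psi_{1,2}$, still gives, after integration against $\rho_j$, a bound with the single weight $e^{-\sigma|(x,z)-\xi_j\vec e_1|}$ on the right and not a sum of such weights (which would not close the fixed-point argument when summed over $j$). Choosing $\sigma\in(0,1)$ small enough so that the triangle-inequality losses in the bumps' interaction estimates (as in (\ref{wu})) are absorbed should resolve this; the same choice of $\sigma$ was used in the proof of Proposition \ref{comp2}.
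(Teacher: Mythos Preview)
Your proposal is correct and follows essentially the same route as the paper: split $k_j$ into the error, nonlinear, linear-in-$\psi_1$, and interaction pieces, bound each in $\|\cdot\|_{\sigma,j}$, and repeat with differences for the Lipschitz estimate; the bound on $c_j$ is read off from (\ref{2.4}). The one place where the paper is cleaner is exactly the obstacle you anticipate: on $\mathrm{supp}\,\rho_j$ one has $\rho_j\eta_i=0$ for $i\neq j$ and $\rho_j\eta=0$, so the paper collapses $\rho_jN(\varphi)$ to $\rho_jN(\eta_j\phi_j+\psi_1)$ and then applies (\ref{ineq}) to get simply $C(|\phi_j|^2+|\psi_1|^2)$, which sidesteps the multi-weight bookkeeping entirely; for the interaction term the paper tracks the sharper exponent $\alpha^{p-2-\sigma}$ (from $\omega_j^{p-2}\sum_{i\neq j}\omega_i$) rather than your cruder $\alpha^{3\sigma/4}$, but either suffices once $\sigma$ is chosen small.
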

\begin{proof} From (\ref{hj}) we have
$$\|\rho_{j}S({\bf w})\|_{\sigma}\leq C \al.$$
Now \be\no \rho_{j} N(\varphi)= \rho_{j} N(\eta_{j}\phi_{j}+
\psi_{1})= N (\rho_{j} \phi_{j}+ \psi_{1}).\ee Hence from
(\ref{ineq}) \be |\rho_{j} N(\varphi)|\leq C  (|\phi_{j}|^2+
|\psi_{1}|^2).\ee As a result we have \be\no  \|\rho_{j}
N(\varphi)\|_{\sigma}\leq C (\|\phi_{j}\|_{\sigma}^2+
\|\psi_{1}\|_{\sigma}^2).\ee and hence \bea \no \|\rho_{j}
N(\varphi)\|_{\sigma}&\leq& C ((\|\Phi_{}\|_{\sigma}+ \|\nabla
\Phi_{}\|_{\sigma})^2+ \al^{\frac{3}{4}\sigma}(\al+
(\|\Phi_{}\|_{\sigma}+ \|\nabla \Phi_{}\|_{\sigma}))^2)\no \\&\leq &
C \al + C \al^{\frac{3}{4}}(\|\Phi_{}\|_{\sigma}+ \|\nabla
\Phi_{}\|_{\sigma}).\eea Then (\ref{5.17}) implies \bea\no
\|k_{j}\|_{\sigma}&\leq& C \al^{}+ C
\al^{1+\frac{3}{4}\sigma}\sum_{j=1}^{\infty}(\|\phi_{j}\|_{\sigma}+
\|\nabla\phi_{j}\|_{\sigma}))\no\\&+& \|\phi_{j}\|_{\sigma}
\bigg(\sum_{j\neq i}e^{- (p-2-\sigma)|\xi_{j}-\xi_{i}|}+ e^{-
(p-2-\sigma)|f(z)-\xi_{i}|}\bigg).\eea This implies that \bea
\|k_{j}\|_{\sigma}&\leq& C \al^{}+ C
\al^{\frac{3}{4}\sigma}(\|\Phi_{}\|_{\sigma}+ \|\nabla
\Phi_{}\|_{\sigma})+ \al^{p-2-\sigma}(\|\Phi_{}\|_{\sigma}+ \|\nabla
\Phi_{}\|_{\sigma}) \no
\\&\leq& C \al^{}+ C\al^{\frac{3}{4}\sigma} (\|\Phi_{}\|_{\sigma}+ \|\nabla
\Phi_{}\|_{\sigma})\eea provided we choose $\sigma$ is chosen small.
The Lipschitz dependence follows in a standard way.
\end{proof}

\begin{lemma}\label{lip3} The problem (\ref{cat13}) and (\ref{orth1}) has a unique solution $\phi$ such that
\be\label{con1}\|\Phi\|_{\sigma}+ \|\nabla\Phi\|_{\sigma} \leq C\al.\ee
Moreover,  the solution  is a continuous function of $v, h, e, \de$ and $ \boldsymbol{\chi}$ and a Lipschitz function of  $h$, $e$ and $\boldsymbol{\chi}.$ Furthermore, for every $j\in \N$, there exists $C>0$ independent of $j$ such that
\bea \|\phi_{j}(h^{(1)}, e^{(1)}, \boldsymbol{\chi}^{(1)})&-& \phi_{j}(h^{(2)}, e^{(2)}, \boldsymbol{\chi}^{(2)})\|_{\sigma}\no\\&\leq& C ( \|h^{(1)}- h^{(2)}\|_{C^{2, \mu}_{\theta}(\R)}+ \|e^{(1)}- e^{(2)}\|_{C^{2, \mu}_{\theta}(\R)}+ \al\|\boldsymbol{\chi}^{(1)}- \boldsymbol{\chi}^{(2)}\|_{\al}).\eea
\end{lemma}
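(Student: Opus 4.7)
The plan is to set up (\ref{cat13}) as a fixed-point problem for $\Phi = (\phi_1, \phi_2, \ldots)$ in the Banach space
\[
\mathcal{B}_\alpha = \{ \Phi : \|\Phi\|_\sigma + \|\nabla \Phi\|_\sigma \leq C_0 \alpha \}
\]
with a suitable large constant $C_0$, and apply the Banach fixed-point theorem. The map is obtained by composing the linear operator $\mathcal{T} = (T_1, T_2, \ldots)$ built in the previous lemma (which solves the projected problem for $L_1$ with estimate $\|\phi_j\|_\sigma + \|\nabla \phi_j\|_\sigma \leq C\|k_j\|_\sigma$) with the assignment $\Phi \mapsto (k_j)_j$ defined by (\ref{hj}). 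Crucially, $k_j$ depends on $\Phi$ both directly through the nonlinear term $\rho_j N(\phi)$ and the coupling $(p\mathbf{w}^{p-1} - p\omega_j^{p-1})\phi_j$, and indirectly through $\psi_1 = \psi_1(\Phi)$ as produced by Lemma \ref{lback1}.

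To verify that the map preserves $\mathcal{B}_\alpha$, I use the two estimates just established: the $k_j$-bound
\[
\|k_j\|_\sigma \leq C\alpha + C\alpha^{3\sigma/4}(\|\Phi\|_\sigma + \|\nabla \Phi\|_\sigma)
\]
and the $T_j$-bound (\ref{sw3}). If $\Phi \in \mathcal{B}_\alpha$, then $\|k_j\|_\sigma \leq C\alpha(1 + C_0 \alpha^{3\sigma/4})$, and summing/maximizing gives $\|\mathcal{T}(k)\|_\sigma + \|\nabla \mathcal{T}(k)\|_\sigma \leq C\alpha \leq C_0\alpha$ for $C_0$ large and $\alpha$ small. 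The contraction property is then a direct consequence of (\ref{lip1}):
\[
\|k_j(\Phi^{(1)}) - k_j(\Phi^{(2)})\|_\sigma \leq C\alpha^{3\sigma/4}(\|\Phi^{(1)} - \Phi^{(2)}\|_\sigma + \|\nabla \Phi^{(1)} - \nabla \Phi^{(2)}\|_\sigma),
\]
combined with the boundedness of each $T_j$ uniformly in $j$. For sufficiently small $\alpha$, the composition has Lipschitz constant $\leq 1/2$ on $\mathcal{B}_\alpha$, yielding a unique fixed point $\Phi$ satisfying (\ref{con1}).

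For the Lipschitz dependence on $h, e, \boldsymbol{\chi}$, I would differentiate the fixed-point identity and use the Lipschitz estimates from Proposition \ref{comp0} (for $\rho_j S(\mathbf{w})$), from Lemma \ref{lback1} (for $\psi_1$), and the elementary fact that $p\mathbf{w}^{p-1} - p\omega_j^{p-1}$ depends smoothly on $\omega_\delta$ centered at $x - f(z) - h(z)$ and on $\boldsymbol{\chi}$ through the bump centers $\xi_j + \chi_j$. Writing the fixed point as $\Phi = \Phi(h, e, \boldsymbol{\chi})$ and using $(I - \mathcal{T} \circ D_\Phi k)^{-1}$ — whose invertibility is ensured by the same contraction estimate used above — I can express the difference $\Phi(h^{(1)}, \ldots) - \Phi(h^{(2)}, \ldots)$ in terms of the parameter-Lipschitz norm of $k$, giving the stated Lipschitz bound.

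The main technical obstacle, in my view, is the careful bookkeeping of the coupling term $(p\mathbf{w}^{p-1} - p\omega_j^{p-1})\phi_j \rho_j$, which ties $\phi_j$ to the global approximation $\mathbf{w}$ (hence to all other bumps and to the front). One must check that in $\operatorname{supp}\rho_j$ the difference $p\mathbf{w}^{p-1} - p\omega_j^{p-1}$ is small enough (exponentially small in $L$, controlled through (\ref{w5}) and the choice of $\alpha = e^{-L/\sqrt{2}}$) to be absorbed into the $\alpha^{3\sigma/4}$ factor that drives the contraction. Everything else — the estimate on $\rho_j S(\mathbf{w})$, the quadratic nature of $N(\phi)$, and the contribution of $(\mathcal{L} - \Delta + 1)\psi_1 \rho_j$ — has already been reduced by Proposition \ref{comp0}, the inequality (\ref{ineq}), and Lemma \ref{lback1} to gains of order $\alpha$ or higher, and can be handled routinely.
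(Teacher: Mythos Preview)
Your proposal is correct and follows essentially the same route as the paper: set up (\ref{cat13}) as a fixed-point problem on a small ball, feed in the $k_j$-estimate and its Lipschitz counterpart (\ref{lip1}) together with the linear bound (\ref{sw3}) to get both self-mapping and contraction, and then read off continuity/Lipschitz dependence on the parameters from Proposition \ref{comp0} and Lemma \ref{lback1}. Your explicit discussion of the coupling term $(p\mathbf{w}^{p-1}-p\omega_j^{p-1})\phi_j\rho_j$ and of the inversion $(I-\mathcal{T}\circ D_\Phi k)^{-1}$ adds detail the paper leaves implicit, but the argument is the same.
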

\begin{proof} First note that from (\ref{zs2}) we have
\be \no \sum_{j=1}^{\infty}\|\phi_{j}\|_{\sigma}+ \|\nabla \phi_{j}\|_{\sigma}\leq C \al+ \al^{\frac{3}{4}\sigma}\bigg(\sum_{j=1}^{\infty}\|\phi_{j}\|_{\sigma}+ \|\nabla \phi_{j}\|_{\sigma}\bigg )\ee
which implies
\be \no \sum_{j=1}^{\infty}(\|\phi_{j}\|_{\sigma}+ \|\nabla \phi_{j}\|_{\sigma})\leq C \al \ee
 which implies that the operator $T_{j}$; $j\geq 1$ in (\ref{cat1}) is a uniform contraction in the set of functions satisfying (\ref{zs2}) as long as
(\ref{2.16}), (\ref{2.17}), (\ref{2.18}) and (\ref{2.19}) hold. In fact $\phi_{j}$ is a continuous function of $v, h, e, \de$ and $ \boldsymbol{\chi}$ and a Lipschitz  function of $h$, $e$ and $\boldsymbol{\chi}$ which follows from Lemma (\ref{lback1}) and Proposition \ref{comp0}. Hence by Banach fixed point theorem we obtain (\ref{con1}).
\end{proof}

\subsection{Existence of solution for \ref{ini1}} As described in the linear theory the derivation of solution of the (\ref{ini1}) is given in the linear theory of the operator in $L_{0}.$ This problem is basically reduced to a problem of fixed point
\be\label{cat1} X_{}^{\star}\phi = T(X^{\star}k + X^{\star} ( d\rho \om ')+ X^{\star} (m \rho  Z))\ee
where $d, m$ satisfy
\be\label{ortr1} d \int_{\R} X^{\star}(\om')^{2}\rho d \text{x}= -\int_{\R} X_{}^{\star}\om' k\rho d\text{x}\ee
\be\label{ortr2} m \int_{\R} X^{\star}Z^{2}\rho d\text{x}= -\int_{\R} X_{}^{\star}Z k\rho d\text{x}\ee
respectively.

\begin{lemma}\label{zs0}  Assume that \be\label{5.47} \|X^{\star}\phi\|_{C^{2, \mu}_{\sigma, \theta}(\R^2)}\leq \al^{\frac{3}{4}\sigma}.\ee
Then we have
\be \|X_{}^{\star}k\|_{C^{2, \mu}_{\sigma, \theta}(\R^2)}\leq C (\al + \al^{\frac{3}{8}\sigma} \|X_{}^{\star}\phi\|_{C^{2, \mu}_{\sigma, \theta}(\R^2)}).\ee
Moreover, the function $X^{\star}k$ is a Lipschitz function of $\phi$ and satisfy
\be\label{lip1-1}  \|X_{}^{\star}k(\phi^{(1)})- X_{}^{\star}k(\phi^{(2)})\|_{C^{2, \mu}_{\sigma, \theta}(\R^2)}\leq C \al^{\frac{3}{8}\sigma} \|X_{}^{\star}\phi^{(1)}- X_{}^{\star}\phi^{(2)} \|_{C^{2, \mu}_{\sigma, \theta}(\R^2)}.\ee Furthermore, we have
\be \|d\|_{C^{0,\mu}_{\theta}(\R)}+ \|m\|_{C^{0,\mu}_{\theta}(\R)}\leq C \|X_{}^{\star}k\|_{C^{0,\mu}_{\sigma, \theta}(\R^2)}.\ee
\end{lemma}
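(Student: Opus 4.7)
The plan is to prove the lemma by unfolding the definition of $X^{\star}k$ given in (\ref{sw2}) and estimating each of the four terms separately in the weighted $C^{2,\mu}_{\sigma,\theta}$--norm. First I would use Proposition~\ref{comp0} to control the error term $X^{\star}[\rho S(\mathbf{w})]$, which already carries the advertised bound $C\al$ and depends Lipschitz-continuously on the parameters. Next, since $\mathcal{L}-\Delta+1 = p\mathbf{w}^{p-1}$ is multiplication by a bounded potential that is uniformly bounded by $C(\cosh\mathrm{x})^{-(p-1)}$ on the support of $\rho$, the term $X^{\star}[\rho(\mathcal{L}-\Delta+1)\psi_{2}]$ is controlled using Lemma~\ref{lback2}: its weighted norm is bounded by $C\al^{\frac{3}{4}\sigma}(\al+\|X^{\star}\phi\|_{C^{2,\mu}_{\sigma,\theta}(\R^{2})})$, and on the support of $\rho$ the extra factor $(\cosh\mathrm{x})^{-(p-1)}$ may be absorbed as a gain in $\al$ since $|\mathrm{x}|\ge \tfrac{7}{8}\log\tfrac{1}{\al}$ is forbidden there—actually $|\mathrm{x}|\le \tfrac{15}{16}\log\tfrac{1}{\al}$, but the pointwise factor is still small enough to give the $\al^{\frac{3}{8}\sigma}$ improvement after trading some of the $\al^{\frac{3}{4}\sigma}$ weight against the polynomial weight in $\mathrm{x}$.

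The real content sits in the combination
\begin{equation*}
-X^{\star}\rho\,\mathcal{L}(\phi)+X^{\star}\rho\bigl[\pa^{2}_{\mathrm{x}}+\pa^{2}_{\mathrm{z}}-F'(\om)\bigr]X^{\star}\phi,
\end{equation*}
which records precisely the discrepancy between the Euclidean linearised operator $\mathcal{L}$ and its model counterpart $\pa^{2}_{\mathrm{x}}+\pa^{2}_{\mathrm{z}}-F'(\om)$ in Fermi coordinates. Writing $\mathcal{L}=\Delta-1+p\mathbf{w}^{p-1}$ and using the formula (\ref{lap1}) for the Laplacian in shifted coordinates, this difference decomposes into a second-order perturbation with coefficients $a_{11},a_{12},a_{22},b_{1},b_{2}$, all of which are $\mathcal{O}_{C^{0,\mu}_{\theta}}(\al)$ by (\ref{2.43}), together with a potential error $p(\mathbf{w}^{p-1}-\om^{p-1})$ supported on $\mathrm{supp}\,\rho$, which is $\mathcal{O}(\al^{\frac{3}{8}\sigma})$ by the decay of $e(\mathrm{z})Z$ and of the spike tails penetrating the front region. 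Since the small coefficients multiply up to two derivatives of $X^{\star}\phi$, Schauder estimates give a bound of the form $C\al^{\frac{3}{8}\sigma}\|X^{\star}\phi\|_{C^{2,\mu}_{\sigma,\theta}(\R^{2})}$. The nonlinear piece $\rho N$ is controlled via the $p>2$ inequality (\ref{ineq}): $|N(\phi)|\le C(|\phi|^{p-2}+|\psi_{2}|^{p-2})(|\phi|^{2}+|\psi_{2}|^{2})$, so under the smallness hypothesis (\ref{5.47}) on $X^{\star}\phi$ and the estimate (\ref{5.42}) on $\psi_{2}$, this is quadratic and harmless, producing a term bounded by $C\al^{\frac{3}{8}\sigma}\|X^{\star}\phi\|_{C^{2,\mu}_{\sigma,\theta}}$.

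For the $d$ and $m$ estimate, the orthogonality relations (\ref{ortr1}), (\ref{ortr2}) define these functions as the ratios
\begin{equation*}
d(\mathrm{z})=-\frac{\int_{\R}X^{\star}\om'\,k\rho\,d\mathrm{x}}{\int_{\R}X^{\star}(\om')^{2}\rho\,d\mathrm{x}},\qquad m(\mathrm{z})=-\frac{\int_{\R}X^{\star}Z\,k\rho\,d\mathrm{x}}{\int_{\R}X^{\star}Z^{2}\rho\,d\mathrm{x}}.
\end{equation*}
The denominators converge to the strictly positive constants $\int_{\R}(\om')^{2}\,dx$ and $\int_{\R}Z^{2}\,dx$ as $\al\to 0$, hence are uniformly bounded below. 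The numerators are controlled by $\|X^{\star}k\|_{C^{0,\mu}_{\sigma,\theta}(\R^{2})}$ multiplied by $\int_{\R}\om'\cdot(\cosh\mathrm{x})^{-\sigma}\,d\mathrm{x}<+\infty$ (and likewise for $Z$), which yields the claimed pointwise bound on $d,m$ in $C^{0,\mu}_{\theta}(\R)$.

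The Lipschitz estimate (\ref{lip1-1}) will be obtained by applying the very same decomposition to the difference $X^{\star}k(\phi^{(1)})-X^{\star}k(\phi^{(2)})$: Proposition~\ref{comp0} handles the Lipschitz dependence of $\rho S(\mathbf{w})$ through the parameters, Lemma~\ref{lback2} supplies the Lipschitz dependence of $\psi_{2}$ on $\phi$, the Laplacian correction contributes a linear operator in $X^{\star}\phi$ with $\mathcal{O}(\al)$ coefficients, and the nonlinear $\rho N$ piece is Lipschitz with constant $\mathcal{O}(\al^{\frac{3}{8}\sigma})$ by the mean value theorem applied to $N$. The main obstacle I expect is keeping careful track of the weighted-norm bookkeeping in the Laplacian correction step—specifically, verifying that the $(\cosh\mathrm{x})^{-\sigma}(\cosh\mathrm{z})^{-\theta}$ weights survive the full two-derivative expansion and that no logarithmic losses appear when trading factors of $(\cosh\mathrm{x})^{-1}$ against powers of $\al$ via $|\mathrm{x}|\lesssim\log\tfrac{1}{\al}$ on $\mathrm{supp}\,\rho$.
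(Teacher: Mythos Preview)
Your proposal is correct and follows essentially the same decomposition and estimates as the paper's proof: Proposition~\ref{comp0} for $\rho S(\mathbf{w})$, Lemma~\ref{lback2} plus the $(\cosh\mathrm{x})^{-(p-1)}$ decay of $p\mathbf{w}^{p-1}$ for the $\psi_2$ term, the coefficient bounds (\ref{2.43}) for the Laplacian discrepancy, inequality (\ref{ineq}) for $\rho N$, and the ratio formulas for $d,m$. One clarification: the $\al^{\frac{3}{8}\sigma}$ bottleneck does \emph{not} come from the linear term $p\mathbf{w}^{p-1}\psi_2$ (there the decay $(\cosh\mathrm{x})^{-(p-1)}$ with $p-1>\sigma$ already absorbs the $\mathrm{x}$--weight and you get the better factor $\al^{\frac{3}{4}\sigma}$ directly, so your mid-sentence worry about the support direction is unnecessary); the loss occurs only in the quadratic piece $|X^\star(\rho\psi_2)|^2$, where $\psi_2$ carries no $\mathrm{x}$--decay and one must pay $(\cosh\mathrm{x})^{2\sigma}\le \al^{-\frac{15}{8}\sigma}$ on $\mathrm{supp}\,\rho$, giving $\al^{\frac{3}{2}\sigma-\frac{15}{8}\sigma}=\al^{-\frac{3}{8}\sigma}$ times $(\al+\|X^\star\phi\|)^2$, which after invoking (\ref{5.47}) yields exactly the $\al^{\frac{3}{8}\sigma}\|X^\star\phi\|$ term---this is the computation (g3) in the paper and is precisely the ``trading'' you anticipate in your final paragraph.
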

\begin{proof} The proof of the Lipschitz property (\ref{lip1-1}) is quite standard and left for an interested reader. We know that
\be \no  \|X^{\star}_{}(\rho S({\bf w}))\|_{C^{0, \mu}_{\sigma,  \theta}(\R^2)}\leq C\al.\ee
We need to estimate $X_{}^{\star}k$ given by (\ref{sw2}). We can rewrite (\ref{sw2}) as
\begin{eqnarray*}  X_{}^{\star}k&=& X_{}^{\star}\bigg[\rho S(\textbf{w})+\rho N\bigg(\sum\eta_{j}\phi_{j}+\eta \phi+ \psi\bigg)\bigg]-  X_{}^{\star}[\rho  (\mathcal{L}- \De+1) \psi_{2}]\no\\&-& X_{}^{\star}\rho  (\mathcal{L}(\phi))+ X_{}^{\star}\rho [\pa ^2_{\text{x}}+ \pa ^2_{\text{z}}- F'(\om)]X_{}^{\star}\phi \end{eqnarray*}
Using Lemma \ref{lback2} we obtain
\be\label{g2} \|X_{}^{\star}(\rho \psi_{2}) (\cosh \mathrm{z})^{ \theta}\|_{C^{2, \mu}(\R^2)}\leq C \al^{\frac{3 \sigma}{4} \sigma}(\al+ \|X_{}^{\star}\phi\|_{C^{2, \mu}_{\sigma, \theta}(\R^2)}).\ee
Note that using the definition of $\rho$ and $\psi$ we have
\be\no \rho N\bigg(\sum_{j=1}^{\infty}\eta_{j}\phi_{j}+\eta \phi+ \psi\bigg)= \rho N ( \eta \phi+ \psi_{2}).\ee
We obtain from (\ref{ineq})
\be |X_{}^{\star}\rho N|\leq C (|X_{}^{\star} \phi|^{2}+ |X_{}^{\star}(\rho \psi_{2})|^{2}).\ee
Note that $$supp~ (X_{}^{\star}\rho) \subset \bigg\{|\text{x}|\leq \frac{15}{16} \log \frac{1}{\al}\bigg\}.$$
We have from (\ref{g2})
\bea\label{g3} \|(\cosh \mathrm{x})^{\sigma}(\cosh \mathrm{z})^{\theta} X_{}^{\star}(\rho \psi_{2})\|^2_{C^{0, \mu}(\R^2)}&\leq& C \al^{-\frac{15}{8}\sigma}\|(\cosh \mathrm{z})^{\theta} X_{}^{\star}(\rho\psi_{2})\|^2_{C^{0, \mu}(\R^2)}\no \\&\leq & \al^{\frac{3\sigma}{2} -\frac{15\sigma}{8}}(\al+ \|X_{}^{\star}\phi\|_{C^{2, \mu}(\R^2)})^2\no\\&\leq & C\al^{-\frac{3\sigma}{8}}(\al+ \|X_{}^{\star}\phi\|_{C^{2, \mu}(\R^2)})^2\no \\&\leq & C\al^{-\frac{3\sigma}{8}}(\al+ \|X_{}^{\star}\phi\|_{C^{2, \mu}(\R^2)})^2.\eea
Hence from (\ref{g3}) we have
\be \| X_{}^{\star}(\rho N)\|_{{C^{0, \mu}_{\sigma, \theta}(\R^2)}}\leq C (\al^{2-\frac{3}{8}\sigma}+ \|X_{}^{\star}\phi\|^2_{{C^{2, \mu}_{\sigma,\theta}(\R^2)}}+ \al^{1-\frac{3}{8}\sigma}\|X_{}^{\star}\phi\|_{{C^{2, \mu}_{\sigma, \theta}(\R^2)}}).\ee
Next we estimate the term $X_{}^{\star}(\rho f'({\bf w})\psi_{2}).$ Note that $X^{\star}(\rho {\bf w}^{p-1})$  decays in the $\mathrm{x}$ variable like $(\cosh \mathrm{x})^{-(p-1)}$ we obtain
\bea \| X_{}^{\star} (\rho f'({\bf w}))\psi_{2}\|_{{C^{0, \mu}_{\sigma, \theta}(\R^2)}}&\leq& C \|(\cosh \mathrm{z})^{\theta} X^{\star}(\rho \psi_{2})\|_{C^{0, \mu}(\R^2)}\no\\&\leq & \al^{\frac{3\sigma}{4}}(\al+ \| X_{}^{\star}\phi\|_{C^{2, \mu}_{\sigma, \theta}(\R^2)}).\eea
In order to estimate the last terms we use (\ref{lap1}) to obtain
\be \| X_{}^{\star}[\rho(\De - \pa^{2}_{\mathrm{x}}-\pa^{2}_{\mathrm{z}})]\phi\|_{{C^{0, \mu}_{\sigma,  \theta}(\R^2)}}\leq C \al \| X_{}^{\star}\phi\|_{C^{2, \mu}_{\sigma, \theta}(\R^2)}\ee
and
\be \| X_{}^{\star}[\rho(f'({\bf w})- f'(\om))]\phi\|_{{C^{0, \mu}_{\sigma, \theta}(\R^2)}}\leq C \al \| X_{}^{\star}\phi\|_{C^{2, \mu}_{\sigma,  \theta}(\R^2)}.\ee
Orthogonality conditions (\ref{ortr1}) and (\ref{ortr2}) imply
\be \no \|d\|_{C^{0,\mu}_{\theta}(\R)}+ \|m\|_{C^{0,\mu}_{\theta}(\R)}\leq C \|X_{}^{\star}k\|_{C^{0,\mu}_{\sigma, \theta}(\R^2)}.\ee
The Lipschitz dependence follows in a standard way.
\end{proof}

\begin{lemma}\label{zs1} The problem (\ref{cat1}), (\ref{ortr1}) and (\ref{ortr2}) has a unique solution $\phi$ such that
\be\label{con1-1}\|X_{}^{\star}\phi\|_{C^{2, \mu}_{\sigma,\theta }(\R^2)}\leq C\al.\ee
\end{lemma}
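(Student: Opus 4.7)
The plan is to realize (\ref{cat1}) as a fixed point problem in the Banach space
$$\X_\al = \{ \phi \in C^{2,\mu}_{\sigma,\theta}(\R^2) : \|X^\star \phi\|_{C^{2,\mu}_{\sigma,\theta}(\R^2)} \leq K_0 \al \}$$
for a sufficiently large constant $K_0$, and to apply the Banach contraction mapping principle. For a given $\phi \in \X_\al$, the multipliers $d(z), m(z)$ are determined by the orthogonality relations (\ref{ortr1})--(\ref{ortr2}) so that the right-hand side of (\ref{cat1}) lies in the range of the linear operator $T$ (which is the solution operator to $L_0$ constructed in the Surjectivity Proposition in Section 4, on the subspace orthogonal to $\om', Z$). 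I would then define
$$\mathcal{F}(\phi) = T\bigl( X^\star k(\phi) + X^\star(d(\phi) \rho \om') + X^\star(m(\phi) \rho Z)\bigr),$$
which, by construction, has its argument satisfying the orthogonality conditions, so $T$ applies and produces an element of $C^{2,\mu}_{\sigma,\theta}(\R^2)$.

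Next I would verify that $\mathcal{F}$ maps $\X_\al$ into itself. From Lemma \ref{zs0},
$$\|X^\star k(\phi)\|_{C^{0,\mu}_{\sigma,\theta}(\R^2)} \leq C(\al + \al^{\frac{3}{8}\sigma} K_0 \al) \leq 2C\al,$$
provided $\al$ is sufficiently small. The same lemma gives $\|d\|_{C^{0,\mu}_{\theta}} + \|m\|_{C^{0,\mu}_{\theta}} \leq C \|X^\star k\|_{C^{0,\mu}_{\sigma,\theta}}$, so because $\rho\om'$ and $\rho Z$ decay exponentially in $\text{x}$ (so lie in $C^{0,\mu}_{\sigma,\theta}$ with bounded norm), the two projection terms contribute at most $C\al$. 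Applying the a priori linear estimate for $T$ from Section~4 (Lemmas \ref{L}, \ref{a0}, \ref{c2} together with the Surjectivity Proposition), we obtain
$$\|\mathcal{F}(\phi)\|_{C^{2,\mu}_{\sigma,\theta}(\R^2)} \leq C\|X^\star k(\phi) + X^\star(d\rho\om') + X^\star(m\rho Z)\|_{C^{0,\mu}_{\sigma,\theta}(\R^2)} \leq C_1 \al,$$
and choosing $K_0 \geq C_1$ yields $\mathcal{F}(\X_\al) \subset \X_\al$.

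For contraction, given $\phi^{(1)}, \phi^{(2)} \in \X_\al$, the Lipschitz estimate (\ref{lip1-1}) of Lemma \ref{zs0} gives
$$\|X^\star k(\phi^{(1)}) - X^\star k(\phi^{(2)})\|_{C^{0,\mu}_{\sigma,\theta}} \leq C\al^{\frac{3}{8}\sigma}\|X^\star(\phi^{(1)}-\phi^{(2)})\|_{C^{2,\mu}_{\sigma,\theta}},$$
and since $d,m$ depend linearly on $X^\star k$ through bounded projection maps, their differences inherit the same Lipschitz constant. Combining with the bound on $T$,
$$\|\mathcal{F}(\phi^{(1)}) - \mathcal{F}(\phi^{(2)})\|_{C^{2,\mu}_{\sigma,\theta}} \leq C\al^{\frac{3}{8}\sigma}\|X^\star(\phi^{(1)}-\phi^{(2)})\|_{C^{2,\mu}_{\sigma,\theta}} \leq \tfrac{1}{2}\|X^\star(\phi^{(1)}-\phi^{(2)})\|_{C^{2,\mu}_{\sigma,\theta}}$$
provided $L$ is sufficiently large (so $\al$ is sufficiently small). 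Banach's theorem then produces a unique fixed point $\phi \in \X_\al$, which is precisely the solution of (\ref{cat1})--(\ref{ortr2}) with the bound (\ref{con1-1}).

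The main conceptual obstacle is making sure the orthogonality conditions (\ref{ortr1})--(\ref{ortr2}) are actually compatible with running the fixed point iteration, that is, that at each step the argument of $T$ satisfies the conditions (\ref{cu1})--(\ref{cu2}) needed by the Surjectivity Proposition; but this is built in by the very choice of $d(\phi)$ and $m(\phi)$ from (\ref{ortr1})--(\ref{ortr2}). The rest is bookkeeping: tracking the weight $\al^{\frac{3}{8}\sigma}$ versus $\al$ carefully, since it is the former that controls smallness needed for the contraction estimate.
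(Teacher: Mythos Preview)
Your proposal is correct and follows essentially the same strategy as the paper: both rely on the linear solution operator $T$ from Section~4 together with the size and Lipschitz estimates of Lemma~\ref{zs0} to close a Banach fixed point argument yielding the bound $\|X^{\star}\phi\|_{C^{2,\mu}_{\sigma,\theta}(\R^2)}\leq C\al$. The paper's proof is terser --- it records the chain of inequalities $\|X^{\star}\phi\|\leq C\|X^{\star}k\|\leq C(\al+\al^{\frac{3}{8}\sigma}\|X^{\star}\phi\|)$ and absorbs the second term --- while you have written out explicitly the ball, the self-mapping, and the contraction step, but the substance is the same.
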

\begin{proof} From (\ref{cat1}) we obtain by the fixed point theorem
\bea \|X_{}^{\star}\phi\|_{C^{2, \mu}_{\sigma,  \theta}(\R^2)}&\leq& C \|X^{\star}_{} k \|_{C^{0, \mu}_{\sigma,  \theta}(\R^2)}+ C \|X^{\star}_{}\rho   d \om' \|_{C^{0, \mu}_{\sigma, \theta}(\R^2)}\no \\&+& C\|X^{\star} \rho  m Z \|_{C^{0, \mu}_{\sigma, \theta}(\R^2)}\no\\&\leq& C  \|X^{\star}_{} k \|_{C^{0, \mu}_{\sigma, \theta}(\R^2)}.\eea
Using the Lemma \ref{zs0} we obtain,
\be \no \|X_{}^{\star}\phi\|_{C^{2, \mu}_{\sigma, \theta}(\R^2)}\leq C (\al+ \al^{\frac{3}{8}\sigma}\|X_{}^{\star}\phi\|_{C^{2, \mu}_{\sigma,  \theta}(\R^2)}).\ee
This implies
\be \no \|X_{}^{\star}\phi\|_{C^{2, \mu}_{\sigma, \theta}(\R^2)}\leq C \al.\ee
\end{proof}
\begin{lemma}\label{lip2} The solution of (\ref{cat1}), (\ref{ortr1}) and (\ref{ortr2})  is a continuous function of $v, h, e, \de $ and $\boldsymbol{\chi}$ and a Lipschitz function of  $h$, $e$ and $\boldsymbol{\chi}.$ Moreover, we have
\bea \|X_{}^{\star}\phi(h^{(1)}&,& e^{(1)}, \boldsymbol{\chi}^{(1)}, .)- X_{}^{\star}\phi(h^{(2)}, e^{(2)}, \boldsymbol{\chi}^{(2)},.)\|_{C^{2, \mu}_{\sigma, \theta}(\R^2)}\no\\&\leq& C \|h^{(1)}- h^{(2)}\|_{C^{2, \mu}_{\theta}(\R)}+ C \|e^{(1)}- e^{(2)}\|_{C^{2, \mu}_{\theta}(\R)}+ C \al\|\boldsymbol{\chi}^{(1)}- \boldsymbol{\chi}^{(2)}\|_{\al }\eea
\end{lemma}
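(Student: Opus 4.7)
The plan is to mimic the structure of the existence proof in Lemma~\ref{zs1}, but applied to the \emph{difference} of two fixed points corresponding to two choices of parameters. Write $\phi^{(i)} = \phi(h^{(i)}, e^{(i)}, \boldsymbol{\chi}^{(i)})$ for $i=1,2$ and let $k^{(i)}, d^{(i)}, m^{(i)}$ be the associated data appearing in the fixed point equation (\ref{cat1}) together with the orthogonality conditions (\ref{ortr1})--(\ref{ortr2}). Subtracting the two fixed point identities and using linearity of $T$ together with Lemma~\ref{a0} and Proposition~5.1 gives the bound
\begin{equation}
\|X^{\star}\phi^{(1)} - X^{\star}\phi^{(2)}\|_{C^{2,\mu}_{\sigma,\theta}(\R^2)} \leq C\bigl(\|X^{\star}k^{(1)} - X^{\star}k^{(2)}\|_{C^{0,\mu}_{\sigma,\theta}(\R^2)} + \|d^{(1)} - d^{(2)}\|_{C^{0,\mu}_\theta(\R)} + \|m^{(1)} - m^{(2)}\|_{C^{0,\mu}_\theta(\R)}\bigr),
\end{equation}
and the orthogonality relations (\ref{ortr1})--(\ref{ortr2}) control the $d$- and $m$-differences by the $k$-difference, so the whole estimate reduces to controlling $\|X^{\star}k^{(1)} - X^{\star}k^{(2)}\|_{C^{0,\mu}_{\sigma,\theta}(\R^2)}$.

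Next I would decompose this $k$-difference according to the expression (\ref{sw2}), namely as contributions from $\rho(S({\bf w}) + N)$, from $\rho(\mathcal{L}-\Delta+1)\psi_2$, and from the commutator terms involving $\phi$. The error term is controlled by the Lipschitz bound in Proposition~\ref{comp0}, which produces exactly the desired $\|h^{(1)}-h^{(2)}\|_{C^{2,\mu}_\theta} + \|e^{(1)}-e^{(2)}\|_{C^{2,\mu}_\theta} + \al\|\boldsymbol{\chi}^{(1)}-\boldsymbol{\chi}^{(2)}\|_\al$. The $\psi_2$-term is handled by the Lipschitz statement in Lemma~\ref{lback2}, which yields a similar bound (with the extra $\al^{3\sigma/4}$ factor), combined with the Lipschitz bound on the companion function $\phi_j$ of the projected problem from Lemma~\ref{lip3}. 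Finally the Lipschitz dependence of $N$ and of the coefficients in (\ref{lap1}) on the parameters, together with the already established \emph{a priori} bound $\|X^{\star}\phi^{(i)}\|_{C^{2,\mu}_{\sigma,\theta}(\R^2)} \leq C\al$ from Lemma~\ref{zs1}, gives a contribution of the same type.

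Assembling these pieces yields an inequality of the schematic form
\begin{equation}
\|X^{\star}\phi^{(1)} - X^{\star}\phi^{(2)}\|_{C^{2,\mu}_{\sigma,\theta}(\R^2)} \leq C\bigl(\|h^{(1)} - h^{(2)}\|_{C^{2,\mu}_\theta(\R)} + \|e^{(1)} - e^{(2)}\|_{C^{2,\mu}_\theta(\R)} + \al\|\boldsymbol{\chi}^{(1)} - \boldsymbol{\chi}^{(2)}\|_\al\bigr) + C\al^{3\sigma/8}\|X^{\star}\phi^{(1)} - X^{\star}\phi^{(2)}\|_{C^{2,\mu}_{\sigma,\theta}(\R^2)},
\end{equation}
where the final term arises from (\ref{lip1-1}) in Lemma~\ref{zs0}. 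Absorbing that term into the left-hand side for $L$ large enough (so that $C\al^{3\sigma/8} < 1/2$) finishes the proof of the claimed Lipschitz estimate.

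The continuity in the remaining parameters $v$ and $\delta$ follows by essentially the same argument, using the continuous (rather than Lipschitz) dependence statements supplied by Propositions~\ref{comp0}--\ref{comp2} and Lemmas~\ref{lback1}--\ref{lback2}. The main obstacle, and the point where care is needed, is the quadratic nuisance term $N$: one must exploit $p > 2$ together with the \emph{a priori} bound $\|X^{\star}\phi^{(i)}\|_{C^{2,\mu}_{\sigma,\theta}} \leq C\al$ so that the Lipschitz constant of $N$ in $\phi$ carries a small factor, preventing it from spoiling the contraction argument used to absorb the $\phi$-difference on the right.
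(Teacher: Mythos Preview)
Your proposal is correct and follows essentially the same approach as the paper: both argue that the fixed-point map in (\ref{cat1}) is a uniform contraction (via Lemma~\ref{zs0}, specifically (\ref{lip1-1})) and then deduce Lipschitz dependence on the parameters from the Lipschitz dependence of the data, invoking Proposition~\ref{comp0} and Lemma~\ref{lback2}; your write-up simply unpacks these steps in more detail than the paper's brief proof. One minor point: your appeal to Lemma~\ref{lip3} for the companion functions $\phi_j$ is unnecessary, since on $\mathrm{supp}\,\rho$ one has $\rho\eta_j=0$ and hence $\rho N\bigl(\sum_j\eta_j\phi_j+\eta\phi+\psi\bigr)=\rho N(\eta\phi+\psi_2)$, so the $\phi_j$'s do not enter the $k$-difference at all.
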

\begin{proof} First note that from Lemma \ref{zs1} we have  $\|X_{}^{\star}\phi\|_{C^{2, \mu}_{\sigma, \theta }(\R^2)}\leq \al$ which implies that the operator $T$ in (\ref{cat1}) is a uniform contraction in the set of functions satisfying (\ref{5.47}) as long as
(\ref{2.16}), (\ref{2.17}), (\ref{2.18}) and (\ref{2.19}) hold. In fact $\phi$ is a continuous function of $v, h, e,\de$ and $ \boldsymbol{\chi}$ and Lipschitz function of  $h$, $e$ and $\boldsymbol{\chi}$ which follows from Lemma \ref{lback2} and Proposition \ref{comp0}  hence by Banach fixed point theorem we obtain (\ref{con1-1}).
\end{proof}

\section{Derivation of the reduced equations} In order to finish the proof of theorem (\ref{1}) we need to adjust the parameter in such a way that
$d(z)=m(z)=c_{j}=0.$ \be \label{z6} \int_{\R} X^{\star} k_{}
\om'_{}d \textbf{x}=0.\ee \be\label{z7} \int_{\R} X^{\star} k_{} Z d
\textbf{x}=0.\ee \be \label{z8} \int_{\R^2} \rho_{j}[N(\varphi) +
S(\textbf{w})] \om_{j, x}+ \int_{\R^2}[p\textbf{w}^{p-1}-
p\om_{j}^{p-1}]\rho_{j}\phi_{j} \om_{j, x} dx+ p
\int_{\R^2}\rho_{j}{\bf w}^{p-1}\psi_{1}\om_{j, x} =0.\ee for all
$j\in \N.$ We will call (\ref{z6}), (\ref{z7}) and (\ref{z8}) as the
{\em reduced system.}  In other words our main idea is to estimate
the lower order terms of (\ref{z6}), \ref{z7} and (\ref{z8}). We
show that (\ref{z6}) and  (\ref{z7}) is equivalent to a nonlocal
nonlinear system of second order differential equations with in
variable $h$, $e$ and $\boldsymbol{\chi}.$  From (\ref{z8}) we
obtain an infinite dimensional Toeplitz matrix. Choose $0< \mu<1.$
Define $\nu_{}= \min\{k_{1}, k_{2}, k_{3}, k_{4}, k_{5},
\frac{3}{4}\sigma\}.$

\begin{prop}\label{po1} Then (\ref{z6}) is equivalent to the following differential equation:
\be\label{redu1} c_{1}(h+ v)''- \frac{\pa \Psi_{L}}{\pa f}(h+ v)= \mathcal{P}\ee
where $\mathcal{P}$ satisfies the following inequality
\be \|\mathcal{P}\|_{C^{0, \mu}_{\theta}(\R)}\leq C \al^{1+\nu}.\ee
Moreover,  $\mathcal{P}$ satisfies Lipschitz property
\bea \|\mathcal{P}(h^{(1)}, e^{(1)}, \boldsymbol{\chi}^{(1)},.)- \mathcal{P}(h^{(2)}, e^{(2)}, \boldsymbol{\chi}^{(2)}, .)\|_{C^{0, \mu}_{\theta}(\R)} &\leq& C (\| h^{(1)}- h^{(2)}\|_{C^{2, \mu}_{\theta}(\R)}\no \\&+& \| e^{(1)}- e^{(2)}\|_{C^{2, \mu}_{\theta}(\R)}+ \al \| \boldsymbol{\chi}^{(1)}-  \boldsymbol{\chi}^{(2)}\|_{\al}).\eea
\end{prop}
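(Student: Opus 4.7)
The plan is to take the projected equation (\ref{nlp}) against the local kernel element $X^{\star}\omega'$ near the front, which is exactly how (\ref{z6}) arises as an orthogonality/solvability condition. The scalar equation in $\text{z}$ so obtained is rewritten term by term, and its dominant part extracted. Two sources dominate: the curvature of the bump line $\bar{\gamma}$, which depends linearly on $\bar{f}''=f''+v''$ and, after the shift $\text{x}\mapsto\text{x}+h$ in the Fermi parametrisation (\ref{w14}), also picks up $h''$; and the interaction between the translate $\omega_\delta(\text{x}+\bar f+h,\text{z})$ of the front and the first bump $\omega_1$ (the remaining bumps give contributions of order $\alpha^2$ by an analogue of (\ref{ke3}) combined with (\ref{w5})). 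Everything else will be absorbed into the remainder $\mathcal{P}$.

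For the dominant linear part, I would substitute the expansion (\ref{lap1})--(\ref{2.43}) of the Laplacian in the modified Fermi coordinates into $S(\mathbf{w})$ and identify the terms containing $h''$ and $v''$. Testing against $\omega'(\text{x})$ and integrating in $\text{x}$ gives the contribution $c_1(h+v)''$ with $c_1=\int_{\R}(\omega'(\text{x}))^2\,d\text{x}$, after using $L_0\omega'=0$ to cancel the zeroth-order piece. Separately, the leading bump-front interaction is extracted as in (\ref{ke3}) by expanding $\omega_0(|(x,z)-\xi_1\vec{e}_{1}|)$ around the position of the front. Before the perturbations $h,v$ are switched on, this interaction produces a source $\Psi_L(f,z)$, which is cancelled by the ODE (\ref{a2s}) satisfied by $f$; after switching on $h+v$ and Taylor-expanding $\Psi_L(f+h+v,z)$ to first order in $h+v$, the net contribution is exactly $-\frac{\partial\Psi_L}{\partial f}(h+v)+O((h+v)^2)$. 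Combining these pieces yields the left-hand side of (\ref{redu1}).

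To estimate $\mathcal{P}$ I would enumerate its ingredients: the second-order Taylor remainder $O((h+v)^2)$ from the above expansion; the projected nonlinear term $\int \rho N(\varphi)\,\omega'\,d\text{x}$; the coupling terms coming from $(\mathcal{L}-\Delta+1)\psi_2$; the extra contribution from the correction $e(\text{z})Z(\text{x})$, which is nearly orthogonal to $\omega'$ in $\text{x}$ and so produces only an $O(\alpha^{1+k_3})$ defect; the $\delta$-piece from Dancer's solution; and the lower-order terms in the curvature expansion encoded by the coefficients in (\ref{2.43}). Each of these is of size $\alpha^{1+\nu}$ in the $C^{0,\mu}_{\theta}(\R)$ norm, with $\nu=\min\{k_1,k_2,k_3,k_4,k_5,\tfrac{3}{4}\sigma\}$, by Proposition \ref{comp0} together with Lemmas \ref{lback1}, \ref{lback2}, \ref{lip3} and \ref{lip2} and the smallness hypotheses (\ref{2.16})--(\ref{2.19}). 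The Lipschitz dependence on $(h,e,\boldsymbol{\chi})$ then follows termwise from the Lipschitz versions of those same estimates.

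The main obstacle I expect is the careful matching of the interaction integral with the nonlinear function $\Psi_L$: one must track the dependence on $(h+v)(z)$ uniformly in $z$ so that the Taylor expansion around $f(z)$ is genuinely first order in $h+v$ with a controllable quadratic remainder, and one has to verify that the shift of coordinates by $h$ inside the Fermi parametrisation enters $\Psi_L$ on exactly the same footing as $v$, so that the combination $h+v$ (rather than $h$ and $v$ separately) is what appears in the linearised operator on the left-hand side.
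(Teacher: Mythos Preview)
Your proposal is correct and follows essentially the same route as the paper: project $X^{\star}k$ (whose dominant part is $X^{\star}(\rho S(\mathbf{w}))$) against $\omega'$, extract from the $b_1$-coefficient in (\ref{lap1})--(\ref{b11}) the term $-(\kappa+h'')\int(\omega')^2\approx -(f''+v''+h'')c_1$, extract from the first bump-front interaction the term $\Psi_L(f+h+v,z)$ which after cancelling $f''$ via (\ref{a2s}) leaves $\frac{\partial\Psi_L}{\partial f}(h+v)$, and then bound all remaining contributions (the $a_{12}$ cross term, the $(\mathcal{L}-\Delta+1)\psi_2$ piece, the $N(\varphi)$ nonlinearity, the $eZ$ correction, the $\delta$-piece, and the $j\geq 2$ bumps) by $C\alpha^{1+\nu}$ using Proposition~\ref{comp0} and Lemmas~\ref{lback2}, \ref{zs1}. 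Your explicit mention of the cancellation of $\Psi_L(f,z)$ against $c_1 f''$ and your care in tracking how $h$ and $v$ enter on the same footing are, if anything, more transparent than the paper's own writeup.
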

\begin{proof} It is easy to check that the main term in the projection of $X_{}^{\star}k$  on $\om'$ is given by $X_{}^{\star}(\rho S({\bf w})).$ We express the laplacian in the local coordinates, using the notation of (\ref{lap1}), and neglecting the higher order terms in $\al$. Then we have \bea\label{sw4} \int_{\R}X_{}^{\star}((\rho S({\bf w}))\om')d\text{x}\sim \int_{\R} b_{1}(\pa_{\text{x}}\om)^2 d\text{x}+ \sum_{j=1}^{\infty }p\int_{\R} \om^{p-1} \om_{j} \pa_{\text{x}}\om d{\text{x}}.\eea
We will show in the later part of the proof that the difference of the left hand side and the right hand side of (\ref{sw4}) is very small in terms $\al.$
We first compute the integral using (\ref{b11}) and we obtain
\bea  \int_{\R} b_{1}(\pa_{\text{x}}\om)^2 d\text{x}&=& \int_{\R} (\pa_{\text{x}}\om)^2 \frac{1}{A^3}(-\kappa_{}A^2-h''A+(h')^2 \kappa_{}- (\text{x}+h)h' \kappa_{}) d\text{x}\no\\&=& -(f''+ h'')\int_{\R} (\om'(x))^2dx +\mathcal{O}_{C^{2, \mu}_{\theta}(\R)}(\|h\|^2_{C^{2, \mu}_{\theta}(\R)}+ \|f\|^2_{C^{3, \mu}_{\theta}(\R)}).\eea
Now we want to compute the terms involving the interaction between the spikes and the front. Let $j=1.$ In fact it is easy to note that for $j\geq 2$
the terms involved is of the higher order. Using the estimate in Section 2 we obtain
\bea\no p\int_{\R} \om^{p-1}(\text{x}) \om_{1}(x, z) \pa_{\text{x}}\om(\text{x}) d{\text{x}}&=& -\int_{\R} \om^{p}(x) \om_{1, x}(x, z) d{{x}}+ \mathcal{O}_{C^{0, \mu}_{\theta}(\R)}(\al^{1+\nu}) \\&=&\frac{\pa \Psi_{L}}{\pa f}(h+ v)+ \mathcal{O}_{C^{0, \mu}_{\theta}(\R)}(\al^{1+\nu})\no.\eea
Now we precisely calculate some of the terms involved in estimating
\be \no \int_{\R}X_{}^{\star}((\rho S({\bf w}))\om')d\text{x}.\ee
We first calculate the
\be\label{zs3} \int_{\R} a_{12}\Xi_{}\rho(\pa^{2}_{\text{x}, \text{z}}\om_{\de})\om d\text{x}\sim +\al \sqrt{\lambda_{1}}h'\de_{}\sin (\sqrt{\lambda_{1}}\text{z})\Xi_{}\int_{\R}\om Z dx\ee

Now we estimate the right hand side of (\ref{zs3}). Then we have
\be \no  \al |\de| \|\sqrt{\lambda_{1}}h'\sin \sqrt{\lambda_{1}}{\text{z}}\Xi_{}\|_{C^{0, \mu}_{\theta}(\R)}\leq C \al^{2+ k_{2}+ k_{4}}=\mathcal{O}_{C^{0, \mu}_{\theta}(\R)}(\al^{2+\nu}).\ee
>From (\ref{sw2}) we have
\be\no X_{}^{\star}(\rho(\mathcal{L}-\De+1)\psi_{2})\sim X_{}^{\star} (\rho \textbf{w}^{p-1}\psi_{2})\ee
Using (\ref{5.42}) we obtain
\be \bigg\|\int_{\R}  X_{}^{\star} (\rho \textbf{w}^{p-1}\psi_{2}) \om'd\text{x}\bigg\|_{C^{0, \mu}_{\theta}(\R)}\leq  \mathcal{O}_{C^{0, \mu}_{\theta}(\R)}(\al^{1+\frac{3}{4}\sigma}).\ee
Moreover, the last term in (\ref{sw2})
\be\no  \int_{\R}[- X^{\star}(\rho \mathcal{L}\phi)+  X_{}^{\star}\rho [\pa ^2_{\text{x}}+ \pa ^2_{\text{z}}- F'(\om)]X_{}^{\star}\phi ]\om'\sim
\int_{\R }X_{}^{\star}[\rho(f'(\text{w})- f'(\om))]\phi \om' d\text{z}.\ee
Hence we have
\be \no \| \int_{\R }X_{}^{\star}[\rho(f'(\text{w})- f'(\om))]\phi \om' d\text{z}\|_{{C^{0, \mu}_{\theta}(\R)}}\leq C |\de| \|X_{}^{\star}\phi\|_{C^{2, \mu}_{\sigma,\theta}(\R^2)}+ C \al\|X_{}^{\star}\phi\|_{C^{2, \mu}_{\sigma,\theta}(\R^2)}\leq C (\al^{2+ k_{4}}+ \al^2).\ee
It is easy to check that the other terms are of the higher  order in $\al$. Hence we obtain
\be\no c_{1}(h+ v)''-\frac{\pa \Psi_{L}}{\pa f }(h+ v)=\mathcal{O}_{C^{0, \mu}_{\theta}(\R)}(\al^{1+\nu_{}})\ee
where $c_{1}= \int_{\R} (\om'(x))^2dx.$
The continuity and the Lipschitz property of $\mathcal{P}$ can be obtained in a standard way using the estimate of the error in Proposition \ref{comp0},  the Lipschitz estimate of $\psi_{2}$ and $\phi$.
\end{proof}

\begin{prop}\label{po2} We have (\ref{z7}) is equivalent to the following differential equation:
\be\label{redu2} e''+\lambda_{1}e=\mathcal{R}\ee
where $\mathcal{R}$ satisfies the following inequality
\be \|\mathcal{R}\|_{C^{0, \mu}_{\theta}(\R)}\leq C \al^{2+\nu_{}}.\ee
Moreover,  $\mathcal{R}$ satisfies Lipschitz property
\bea \|\mathcal{R}(h^{(1)}, e^{(1)}, \boldsymbol{\chi}^{(1)}, .)- \mathcal{R}(h^{(2)}, e^{(2)}, \boldsymbol{\chi}^{(2)}, .)\|_{C^{0, \mu}_{\theta}(\R)} &\leq& C (\| h^{(1)}- h^{(2)}\|_{C^{2, \mu}_{\theta}(\R)}\no \\&+& \| e^{(1)}- e^{(2)}\|_{C^{2, \mu}_{\theta}(\R)}+ \al \| \boldsymbol{\chi}^{(1)}-  \boldsymbol{\chi}^{(2)}\|_{\al}).\eea
\end{prop}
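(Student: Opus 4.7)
My plan is to mirror the proof of Proposition \ref{po1}, but projecting $X^\star k$ against $Z$ instead of $\om'$. The whole reason for inserting the extra ansatz $e(\mathrm{z})Z(\mathrm{x})$ into the approximate solution $\mathbf{w}$ was to generate the operator $\partial^2_\mathrm{z}+\lambda_1$ upon such a projection, and I will exploit this as follows. In the decomposition $S(\mathbf{w})=E_1+E_2+E_3$ used in the proof of Propositions \ref{comp0}--\ref{comp2}, only the term $E_2=(\Delta+F'(\mathrm{w}))(eZ)$ carries the eigenvalue information; every other ingredient ($E_1$, $E_3$, the background $\psi_1,\psi_2$, the perturbations $\phi,\phi_j$, and the nonlinear $N$) should end up in the higher-order remainder $\mathcal{R}$.

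First I would expand $\Delta$ in the Fermi coordinates via (\ref{lap1}), use $F'(\mathrm{w})=p\mathrm{w}^{p-1}-1$, and invoke the eigenvalue identity $Z''+(p\om^{p-1}-1)Z=\lambda_1 Z$ from (\ref{h2}) to obtain
\be
E_2=(e''(\mathrm{z})+\lambda_1 e(\mathrm{z}))Z(\mathrm{x})+e(\mathrm{z})(p\mathrm{w}^{p-1}-p\om^{p-1})Z(\mathrm{x})+\mathcal{E}_{\mathrm{curv}}(e,Z),
\ee
where $\mathcal{E}_{\mathrm{curv}}$ collects the corrections coming from the coefficients $a_{ij},b_k$ in (\ref{lap1}). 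Since $\|e\|_{C^{2,\mu}_\theta(\R)}\le C\al^{2+k_3}$ by (\ref{2.18}) and $a_{ij},b_k=\mathcal{O}(\al)$ by (\ref{2.43}), the piece $\mathcal{E}_{\mathrm{curv}}$ is of order $\al^{3+k_3}$ in $C^{0,\mu}_\theta$. Moreover, on the support of $Z$ one has $\mathrm{w}=\Xi\om_\de+\Xi_0\om$ with $|\mathrm{w}-\om|\le C|\de|Z$, so $p\mathrm{w}^{p-1}-p\om^{p-1}=\mathcal{O}(|\de|)=\mathcal{O}(\al^{1+k_4})$, which contributes $\mathcal{O}(\al^{3+k_3+k_4})$. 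Integrating against $Z$ then yields the leading contribution $(e''+\lambda_1 e)\int_\R Z^2\,dx$ plus an acceptable remainder. After dividing by this positive constant, one recovers the asserted ODE (\ref{redu2}).

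What remains is to show that every other contribution to $\int_\R X^\star k\,Z\,d\mathrm{x}$ fits within the $\al^{2+\nu}$ budget. For $E_1=\Delta\mathrm{w}+F(\mathrm{w})$ I would use the parity of $Z$ in $\mathrm{x}$: the leading Fermi-coordinate terms such as $b_1\pa_\mathrm{x}\om$ and $\pa_\mathrm{x}^2\Xi\,\om_\de$ are odd in $\mathrm{x}$ and die against the even $Z$, while the surviving even residues carry extra powers of $\al$ or $\de$. For $E_3$, the interaction integrals of the form $\int p\om^{p-1}\om_j Z\,d\mathrm{x}$ are controlled by the sharp estimates of Section 2 together with the weight $(\cosh\mathrm{z})^\theta$. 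The $\psi_2$-projection reduces to $p\int X^\star(\rho\mathbf{w}^{p-1}\psi_2)\,Z\,d\mathrm{x}$, bounded by combining (\ref{5.42}) with Lemma \ref{zs1}. The $\phi$-piece collapses, via (\ref{sw2}), to $\int X^\star[\rho(F'(\mathrm{w})-F'(\om))]\phi\,Z\,d\mathrm{x}$ plus curvature terms, bounded by $(|\de|+\al)\|X^\star\phi\|_{C^{2,\mu}_{\sigma,\theta}}=\mathcal{O}(\al^2)$. Finally $N$ is pointwise controlled by $|\eta\phi+\psi_2|^2=\mathcal{O}(\al^2)$. The Lipschitz dependence of $\mathcal{R}$ on $(h,e,\boldsymbol{\chi})$ is inherited term-by-term from Proposition \ref{comp0} and Lemmas \ref{lback1}, \ref{lback2}, \ref{lip3}, \ref{lip2}. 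The hard part will be ensuring that the $\mathrm{x}$-parity cancellations in the $E_1$-projection survive the Fermi-coordinate curvature corrections; without them, terms of bare size $\al$ would appear in $\mathcal{R}$ and destroy the $\al^{2+\nu}$ estimate.
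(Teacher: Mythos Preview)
Your approach is essentially the same as the paper's: both isolate the $E_2=(\Delta+F'(\mathrm{w}))(eZ)$ contribution, invoke the eigenvalue identity (\ref{h2}) to extract $(e''+\lambda_1 e)\int_\R\rho Z^2\,d\mathrm{x}$, and push every other term into $\mathcal{R}$. The paper's proof is much terser than yours---it records only the one computation (\ref{5.48})--(\ref{5.49}) and asserts that the rest ``follows in a standard way''---so your term-by-term treatment of $E_1$, $E_3$, the $\psi_2$-, $\phi$- and $N$-pieces (including the parity cancellation for $b_1\om'$ against $Z$) supplies details the paper simply omits rather than constituting a different route.
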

\begin{proof} It is easy to check that the dominating term in (\ref{redu2}) is given by
\be\label{5.48} \int_{\R}X_{}^{\star}(\rho S({\bf w})X_{}^{\star} Z) d\text{x}\sim \int_{\R} [\pa_{\text{x}}^{2}+ \pa_{\text{z}}^{2}+ f(\om (\text{x}))]e(\text{z})Z(\text{x})\rho Z(\text{x})d\text{x}.\ee
But we know that \be \no \{\pa^2_{\text{x}}+ f(\om (\text{x}))\}Z= \lambda_{1}Z\ee and hence we have the right hand side of (\ref{5.48}) reduces to
\bea\label{5.49} \int_{\R} [\pa_{\text{x}}^{2}+ \pa_{\text{z}}^{2}+ f(\om (\text{x}))]e(\text{z})Z(\text{x})\rho Z(\text{x})d\text{x}&\sim& \int_{\R} (\pa_{\text{z}}^2+\lambda_{1}) e(\text{z})\rho Z^2)\no\\&\sim&  ( e''(\text{z})+ \lambda_{1} e_{})\int_{\R} \rho Z^2 dx.\eea
This gives the reduced equation for $e.$  The Lipschitz property follows in a standard way.
\end{proof}

We have from (\ref{reso})
\be\label{redu40}\Theta=\int_{\R^{+}}\int_{\R}X_{}^{\star} (k Z)Z(\text{x})\cos(\sqrt{\lambda}_{1}\text{z})d\text{x}d\text{z}=0.\ee
>From (\ref{redu40}) we deduce the reduced equation for the parameter $\de$.

\begin{lemma}\label{po3} Moreover,
\be\label{a}\Theta=\varsigma \sqrt{\lambda_{1}} \de_{}+ \mathcal{O}(\al^{1+\nu_{}}).\ee
\end{lemma}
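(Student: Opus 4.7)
The plan is to apply the Lyapunov--Schmidt projection template already developed in Propositions \ref{po1} and \ref{po2}, now pairing $X^\star k$ against the second resonant direction $Z(\text{x})\cos(\sqrt{\lambda_1}\text{z})$. Substituting the expression (\ref{sw2}) for $X^\star k$ into (\ref{redu40}) splits $\Theta$ into four pieces: the main cut-off error $X^\star(\rho S({\bf w}))$, the nonlinearity $X^\star(\rho N)$, the background-coupling term $X^\star(\rho p {\bf w}^{p-1}\psi_2)$, and the two commutator contributions coming from $\phi$ and the mismatch between the flat and curved Laplacians.

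The three subsidiary pieces are controlled using the a priori bounds already established: $\|(\cosh\text{z})^\theta\psi_2\|_{C^{2,\mu}} = O(\al^{1+3\sigma/4})$ from Lemma \ref{lback2}, $\|X^\star\phi\|_{C^{2,\mu}_{\sigma,\theta}} = O(\al)$ from Lemma \ref{zs1}, and the quadratic bound (\ref{ineq}) giving $\|\rho N\|_{C^{0,\mu}_{\sigma,\theta}} = O(\al^{2})$. Each contributes at most $O(\al^{1+\nu})$ to $\Theta$ after pairing with $Z\cos(\sqrt{\lambda_1}\text{z})$ and exploiting the weight $(\cosh\text{z})^{-\theta}$. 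The principal piece $X^\star(\rho S({\bf w}))$ is decomposed as $S({\bf w}) = E_1 + E_2 + E_3$ exactly as in Proposition \ref{comp0}: the spike--front interaction $E_3$ projects to $O(\al^{1+\nu})$ by Proposition \ref{comp2}, and $E_2$ reduces in the Dancer region to $(e'' + \lambda_1 e) Z$, whose pairing with $\cos(\sqrt{\lambda_1}\text{z})$ is bounded by $\|\mathcal{R}\|_{C^{0,\mu}_\theta} = O(\al^{2+\nu})$ courtesy of Proposition \ref{po2}.

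The core computation is the projection of $E_1$. Writing $\bar w = \omega + \Xi\delta Z\cos(\sqrt{\lambda_1}\text{z}) + eZ + O(\delta^2)$ and extracting the $\delta$-linear contribution from $\Delta\bar w - \bar w + \bar w^p$, we use that $(\partial_\text{x}^2 + p\omega^{p-1} - 1) Z = \lambda_1 Z$ from (\ref{h2}) so that the interior terms collapse. The surviving $\delta$-linear pieces come from the cut-off derivatives $\Xi''\omega_\delta + 2\Xi'\partial_\text{z}\omega_\delta$, from the curvature-induced extra Laplacian coefficients $a_{ij}, b_i$ applied to the $\delta$-part of $\omega_\delta$, and from the nonlinear mismatch on the overlap region $\text{supp}\,\Xi\cap\text{supp}\,\Xi_0$. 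Projecting against $Z\cos(\sqrt{\lambda_1}\text{z})$, integrating by parts on the $\Xi''$ term using $\Xi'(0) = \Xi'(+\infty) = 0$, and organising the remainder with the estimates (\ref{2.43}) together with the smallness bounds (\ref{2.16})--(\ref{2.19}) on $h, e, \kappa, \boldsymbol{\chi}$, the surviving contribution assembles into $\varsigma\sqrt{\lambda_1}\delta + O(\al^{1+\nu})$, with the explicit nonzero constant $\varsigma$ given by an integral involving $\int_\R Z^2\, d\text{x}$ and the cut-off profile. The Lipschitz dependence of $\Theta$ on the parameters $(h, e, \boldsymbol{\chi})$ follows in the standard manner from the Lipschitz estimates in Propositions \ref{po1}--\ref{po2} and Lemmas \ref{lback2}--\ref{lip2}.

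The main obstacle is the fine bookkeeping in the last step: several of the cut-off and curvature pairings conspire to cancel pairwise after integration by parts, and isolating the surviving coefficient $\varsigma\sqrt{\lambda_1}$ requires a careful analysis of the asymmetric boundary and curvature terms while simultaneously verifying that every other contribution fits strictly below the $O(\al^{1+\nu})$ threshold.
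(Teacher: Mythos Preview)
Your proposal is correct and follows essentially the same route as the paper. Both arguments reduce $\Theta$ to the projection of $X^{\star}(\rho S(\text{w}))$ (your $E_1$) onto $Z(\text{x})\cos(\sqrt{\lambda_1}\text{z})$, then extract the leading $\delta$-linear piece from the cut-off derivative terms $\Xi''(\omega_\delta-\omega)+2\Xi'\partial_{\text{z}}\omega_\delta$, using $\omega_\delta-\omega\sim\delta Z\cos(\sqrt{\lambda_1}\text{z})$ and $\partial_{\text{z}}\omega_\delta\sim-\sqrt{\lambda_1}\delta Z\sin(\sqrt{\lambda_1}\text{z})$; your treatment of the subsidiary pieces ($N$, $\psi_2$, $\phi$, $E_2$, $E_3$) is a bit more explicitly organized than the paper's, but the substance is the same.
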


\begin{proof} From (\ref{redu40}) we have
\be\no \Theta= \int_{\R^{+}}^{}\int_{\R}X_{}^{\star} (\rho S(\text{w}))Z(\text{x})\cos(\sqrt{\lambda}_{1}\text{z})d\text{x}d\text{z}+ \mathcal{O}(\al^{1+\nu_{}})\ee
where $\text{w}$ is defined in (\ref{ap}). But we have from
\be\label{g5} X_{}^{\star} (\rho S(\text{w}))\sim \pa^2_{\text{x}}\text{w}+ \pa^2_{\text{z}}\text{w}+ F(\text{w}).\ee
But using the fact that $\Xi_{}+\Xi_{0}=1 $ we have
\bea \pa^2_{\text{x}}\text{w}+ \pa^2_{\text{z}}\text{w}+ F(\text{w}) &\sim& [\Xi_{}'' \om_{\de}+ \Xi_{0}'' \om_{}]\no\\&+& 2 [\Xi_{}' \pa_{\text{z}}\om_{\de}+ \Xi_{0}' \pa_{\text{z}}\om_{}]\no\\&=& [\Xi_{}'' (\om_{\de}- \om_{0})]+ 2 \Xi_{}' \pa_{\text{z}}\om_{\de}\eea
Further we have
\be \pa_{\text{z}}\om_{\de} \sim -\sqrt{\lambda_{1}} Z \de_{}\sin (\sqrt{\lambda_{1}}\text{z})\ee
\be (\om_{\de}- \om_{0}) \sim Z \de_{}\cos (\sqrt{\lambda_{1}}\text{z}),\ee
where the neglected terms are of higher order $\mathcal{O}_{C^{\infty}(\R)}(|\de|^{2})(\cosh \text{x})^{-1}$ and consequently their contribution is small.
Then from (\ref{g5}) we have
\be\no \Theta \sim \varsigma \de \sqrt{\lambda_1} \int_{\R^{+}} \Xi_{}'\sin 2(\sqrt{\lambda_{1}}\text{z}) d\text{z}=\varsigma \sqrt{\lambda_{1}} \de \no\ee where $\varsigma= \int_{\R}\rho Z^2.$
\end{proof}

\begin{prop} We have (\ref{z8}) is equivalent to the following system of equations
\be \label{redu3o} \gamma_{0} (e^{-|\xi_{1}-f(0)|}\chi_{1}- e^{-|\xi_{2}-\xi_{1}|}(\chi_{2}-\chi_{1}))= \mathcal{G}_{1}(v, h, e, \de_{}, \boldsymbol{\chi})\ee
and for $j\geq 2$  we have
\be\label{redu3} \gamma_{0} (e^{-|\xi_{j+1}-\xi_{j}|}(\chi_{j+1}-\chi_{j})- e^{-|\xi_{j}-\xi_{j-1}|})(\chi_{j}-\chi_{j-1}))= \mathcal{G}_{j}(v, h, e, \de_{}, \boldsymbol{\chi})\ee
where $\mathcal{G}= \{\mathcal{G}_{j}\}_{j\geq 1}$ satisfies the following inequality
\be \|\mathcal{G}\|_{\al}=  \max_{i} \al^{-i}|\mathcal{G}_{i}|\leq C \al^{1+\nu}.\ee
Moreover,  $\mathcal{G}$ satisfies Lipschitz property
\bea \|\mathcal{G}(h^{(1)}, e^{(1)}, \boldsymbol{\chi}^{(1)}, .)- \mathcal{G}(h^{(2)}, e^{(2)}, \boldsymbol{\chi}^{(2)}, .)\|_{\al} &\leq& C  (\| h^{(1)}- h^{(2)}\|_{C^{0, \mu}_{\theta}(\R)}\no \\&+& \| e^{(1)}- e^{(2)}\|_{C^{0, \mu}_{\theta}(\R)}+ \al \| \boldsymbol{\chi}^{(1)}-  \boldsymbol{\chi}^{(2)}\|_{\al}).\eea
and continuous in the remaining variables.
\end{prop}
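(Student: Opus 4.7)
The approach parallels the derivations of Propositions \ref{po1} and \ref{po2}: inside (\ref{z8}), my task is to identify the genuinely leading-order contribution, which must produce the Toeplitz-type structure on the left-hand side, and to collect everything else into $\mathcal{G}_j$. The second integral in (\ref{z8}) carries the factor $(\textbf{w}^{p-1}-\omega_j^{p-1})$ which is small on the support of $\rho_j$, and combined with $\|\Phi\|_\sigma \leq C\alpha$ from Lemma \ref{lip3} it gives a contribution comfortably of size $\alpha^{j+1+\nu}$ in the $\alpha$-weighted norm; similarly the $\psi_1$-piece is controlled by Lemma \ref{lback1} and the $N(\varphi)$-piece by its quadratic nature. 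The dominant contribution is therefore $\int_{\R^2}\rho_j S(\textbf{w})\omega_{j,x}\,dx\,dz$.

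Writing $\textbf{w} = \eta_\zeta\bar w + \sum_k \omega_k$ and using that each $\omega_k$ and (modulo exponentially small error) $\bar w$ solve the equation, the surviving cross terms reduce to $F(\textbf{w}) - F(\bar w) - \sum_k F(\omega_k)$. On the support of $\rho_j$ the bump $\omega_j$ dominates, so to leading order these cross terms equal $p\,\omega_j^{p-1}\bigl(\sum_{k\neq j}\omega_k + \eta_\zeta\bar w\bigr)$. I would then apply the key identity (\ref{ke1}) with the perturbed distances $|\xi_{j+1}+\chi_{j+1}-\xi_j-\chi_j|$ and $|\xi_j+\chi_j-\xi_{j-1}-\chi_{j-1}|$ for the nearest-neighbor interactions with $\omega_{j\pm 1}$, and (\ref{ke3}) for the interaction with the front (present only when $j=1$). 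Because the unperturbed spacings satisfy $\xi_{j+1}-\xi_j = \xi_j - \xi_{j-1}$ by (\ref{spi1}), the zeroth-order-in-$\chi$ contributions cancel between neighbors for $j\geq 2$; Taylor expanding to first order in $\chi_{j\pm 1}-\chi_j$ and using $\omega_0'(L)/\omega_0(L)\to -1$ produces precisely the Toeplitz structure $\gamma_0\bigl[e^{-|\xi_{j+1}-\xi_j|}(\chi_{j+1}-\chi_j)-e^{-|\xi_j-\xi_{j-1}|}(\chi_j-\chi_{j-1})\bigr]$, with polynomial prefactors absorbed into $\gamma_0$. For $j=1$ the analogous expansion picks up the additional front-bump term whose perturbation by $\chi_1$ gives the $\gamma_0 e^{-|\xi_1-f(0)|}\chi_1$ contribution after absorbing the ratio $\gamma_1/\gamma_0$.

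All remaining contributions are then dumped into $\mathcal{G}_j$: the second-order Taylor remainders in $\chi$ (which carry a factor $\|\boldsymbol{\chi}\|_\alpha^2 \leq C\alpha^{2k_5}$), the non-nearest-neighbor interactions (each extra step of separation costs a factor $e^{-L}=\alpha^{\sqrt 2}$), the $\mathcal{O}((\log(1/\alpha))^{-1})$ correction in (\ref{ke1})-(\ref{ke3}), the $\phi_j$, $\psi_1$ and $N(\varphi)$ contributions, the $\omega_\delta$-corrections inside $\bar w$, and the cross contribution with $e(\text{z})Z(\text{x})$. Each of these carries at least one factor of $\omega_j^{p-1}$ localised near $(\xi_j,0)$, hence contributes $\mathcal{O}(\alpha^{j+1+\nu})$ in absolute terms, yielding the required $\|\mathcal{G}\|_\alpha \leq C\alpha^{1+\nu}$ after multiplying by $\alpha^{-j}$. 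The Lipschitz dependence on $(h,e,\boldsymbol{\chi})$ follows termwise from the Lipschitz statements already recorded in Propositions \ref{comp0}--\ref{comp2} and Lemmas \ref{lback1}, \ref{lback2}, \ref{lip3}, \ref{lip2}; continuity in $v$ and $\delta$ is immediate from the continuous dependence of $\textbf{w}$. The main obstacle I anticipate is the bookkeeping required to ensure that every one of the infinitely many remainder pieces respects the weighted norm $\|\cdot\|_\alpha$ uniformly in $j$, and in particular that the Taylor remainder in expanding $\omega_0(L+(\chi_{j\pm 1}-\chi_j))$ is correctly absorbed using the smallness $\|\boldsymbol{\chi}\|_\alpha \leq C\alpha^{k_5}$ rather than inadvertently polluting the leading Toeplitz term.
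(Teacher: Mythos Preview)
Your proposal is correct and follows essentially the same approach as the paper: identify $\int_{\R^2}\rho_j S(\textbf{w})\omega_{j,x}$ as the dominant term, extract the Toeplitz structure from the nearest-neighbor interactions via (\ref{ke1}) and the front interaction via (\ref{ke3}), and bound the remaining pieces (the $\psi_1$, $N(\varphi)$, and $(\textbf{w}^{p-1}-\omega_j^{p-1})\phi_j$ contributions) by $\mathcal{O}(\alpha^{j+1+\nu})$. The paper's own proof is considerably terser---it simply writes ``WLOG $\gamma_0=\gamma_1$'' in place of your ratio absorption, invokes (\ref{ke1})--(\ref{ke3}) without spelling out the Taylor expansion in $\chi$, and records only a couple of representative remainder estimates---so your version is in fact more explicit about the mechanism (cancellation of zeroth-order terms via (\ref{spi1}), first-order expansion in $\chi_{j\pm 1}-\chi_j$) that produces the linear Toeplitz left-hand side.
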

\begin{proof} Without loss of generality let $\gamma_{0}= \gamma_{1}$.
Using the estimates (\ref{ke1}) and (\ref{ke3}) we obtain (\ref{redu3o}) and (\ref{redu3}). Now we estimate some of the terms involved in $\mathcal{G}.$
\be \label{mn1} p\int_{\R^2} {\bf w}^{p-1}\psi_{1}\om_{j, x}dx dz= \mathcal{O}(\al^{1+\frac{3}{4}\sigma+ j}).\ee
and
\be \label{mn2}\int_{\R^2} N(\varphi)\om_{j, x}dxdz\leq C\int_{\R^2} |\varphi|^2\om_{j, x}dxdz=
 \mathcal{O}(\al^{2+j}).\ee
Now we precisely calculate some of the terms involved in estimating (\ref{redu3o})-(\ref{redu3})
\be\label{mn3}\int_{\R^2}X_{}^{\star}((\rho S({\bar{w}}))\om_{j, \text{x}})d\text{x} d\text{z}\sim \int_{\R^2}X_{}^{\star}((\rho S({\text{w}})))\om_{j, \text{x}}d\text{x} d\text{z}\ee thus neglecting the higher order term in $\al.$
We first calculate the lower order term in the expression
\be\no \int_{\R^2} |a_{2}\Xi_{}\rho(\pa^{2}_{\text{x}, \text{z}}\om_{\de})\om_{j, \text{x}}|d\text{x} d\text{z}\leq \sqrt{\lambda_{1}} \al^{1+ k_{2}}|\de|\int_{\R^2}|\om_{j, x}| dxdz= \mathcal{O}(\al^{2+ k_{2}+ k_{4}+j}).\ee
\end{proof}

\section{Solution of the reduced systems and proof of Theorem \ref{1.1}}

\subsection{Proof of Theorem \ref{1.1}} We now complete the proof of Theorem \ref{1}. To this end we have to solve the following system of equations
\be\label{line1} c_{1}(h+ v)''-\frac{ \Psi_{L}(f, \text{z})}{\pa f}(h+ v)= \mathcal{P}(v, h, e, \de , \boldsymbol{\chi})\ee
\be\label{line2} e''+ \lambda_1 e=\mathcal{R}(v, h, e, \de, \boldsymbol{\chi})\ee
\be \label{line3} \sqrt{\lambda}_{1}\varsigma_{0} \de = \Theta(v, h, e, \de_{},\boldsymbol{\chi})\ee
\begin{equation}
  \label{line4}
 \left\{\begin{aligned}\gamma_{0} (e^{-|\xi_{1}-f(0)|}\chi_{1}- e^{-|\xi_{2}-\xi_{1}|}(\chi_{2}-\chi_{1}))&= \mathcal{G}_{1}(v, h, e,  \de, \boldsymbol{\chi})\\
\gamma_{0} (e^{-|\xi_{j+1}-\xi_{j}|}(\chi_{j+1}-\chi_{j})- e^{-|\xi_{j}-\xi_{j-1}|}(\chi_{j}-\chi_{j-1}))&= \mathcal{G}_{j}(v, h, e,  \de, \boldsymbol{\chi}).
 \end{aligned}
  \right.
\end{equation}

\begin{prop} The system (\ref{line1})-(\ref{line4}) is a one parameter family of solutions in the sense that for each choice of $\de \in \R,$ the system admits a solution containing $\de$ and the functions $v, h, e$ and the parameter $\boldsymbol{\chi}.$
\end{prop}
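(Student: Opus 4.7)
The plan is to cast the coupled system (\ref{line1})--(\ref{line4}) as a fixed-point problem on the product Banach space
\[
\mathcal{X} \;=\; \bigl(C^{2,\mu}_{\theta}(\R) \oplus \mathcal{E}\bigr) \;\times\; C^{2,\mu}_{\theta}(\R) \;\times\; \Omega
\]
for the triple $(H, e, \boldsymbol{\chi})$ with $H := h + v$, treating $\de$ as a (small) free parameter in the admissible range $|\de|\leq \al^{1+k_4}$, and then applying the Banach contraction principle. For each such $\de$ I will define a nonlinear map $\mathcal{F}_{\de}:\mathcal{X}\to\mathcal{X}$ by inverting each line of the reduced system with the linear machinery already assembled.

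Concretely, equation (\ref{line1}) is solved by the Linear Decomposition Lemma (Lemma \ref{2.2}), giving $H = c_{1}^{-1}\mathcal{Q}^{-1}\bigl(\mathcal{P}(v,h,e,\de,\boldsymbol{\chi})\bigr)$; combined with the a priori bound $\|\mathcal{P}\|_{C^{0,\mu}_{\theta}(\R)}\leq C\al^{1+\nu}$ from Proposition \ref{po1}, this places $H$ inside a ball of radius $O(\al^{1+\nu})$ compatible with (\ref{2.16})--(\ref{2.17}). Equation (\ref{line2}) is inverted by the variation-of-parameters operator $\mathcal{S}$ of (\ref{6.34}), which requires the resonance orthogonality (\ref{resor}); this requirement is precisely the content of (\ref{line3}), since by Lemma \ref{po3} we have $\Theta = \varsigma\sqrt{\lambda_{1}}\de + \mathcal{O}(\al^{1+\nu})$, so (\ref{line3}) reduces either to an automatic leading-order identity (when $\varsigma_{0}=\varsigma$) or to a one-dimensional implicit-function problem for $\de$ with unique solution in the prescribed ball. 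Once the orthogonality is in force, (\ref{6.35}) and Proposition \ref{po2} yield $\|e\|_{C^{2,\mu}_{\theta}(\R)}=O(\al^{2+\nu})$, matching (\ref{2.18}). Finally (\ref{line4}) is recast in the form $\gamma_{0} T(\boldsymbol{\chi}) = \widetilde{\mathcal{G}}$ with $T$ the Toeplitz operator of \S2.5 (after absorbing the front-to-first-bump term via a shift); applying the inverse $I$, whose operator norm on $\Omega$ is $\mathcal{O}(\al)$, together with $\|\mathcal{G}\|_{\al}=O(\al^{1+\nu})$, produces $\boldsymbol{\chi}$ with $\|\boldsymbol{\chi}\|_{\al}=O(\al^{2+\nu})$, safely inside $\|\boldsymbol{\chi}\|_{\al}\leq C\al^{k_{5}}$.

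With these three inversions in hand, the a priori estimates just listed show that $\mathcal{F}_{\de}$ maps a suitable ball of $\mathcal{X}$ into itself. The contraction property then follows from the Lipschitz bounds recorded in Propositions \ref{po1}, \ref{po2} and in the final proposition of Section 6, together with the smallness of the operator norms of $\mathcal{Q}^{-1}$, $\mathcal{S}$, and $I$: the overall contraction constant is of order $\al^{\nu}$, which is arbitrarily small for $L$ large. The Banach fixed point theorem then produces, for each $\de$, a unique fixed point of $\mathcal{F}_{\de}$, and the continuous dependence on $\de$ is inherited from the continuity of all the Lipschitz estimates in their parameters. This yields the one-parameter family claimed.

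The principal technical obstacle is the handling of (\ref{line4}) in the infinite-dimensional sequence space $\Omega$: one must verify that the weighted norm $\|\cdot\|_{\al}$ is preserved through the Toeplitz inversion and that the cross-Lipschitz dependence of $\mathcal{G}$ on $\boldsymbol{\chi}$ does not compete with the inverse-operator norm. This is exactly where the $\mathcal{O}(\al)$ bound on $T^{-1}$ is essential, since it absorbs the order-one Lipschitz loss and keeps the contraction constant uniformly small in $L$. A secondary but more subtle point is the decoupling of (\ref{line3}) from the rest of the system: one must track the normalization carefully to ensure that the leading linear part of $\Theta$ in $\de$ dominates the remainder $\mathcal{O}(\al^{1+\nu})$, so that $\de$ genuinely parametrizes the family rather than being forced to a single value.
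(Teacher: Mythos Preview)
Your single-step Banach fixed point on $(H,e,\boldsymbol{\chi})$ with $H=h+v$ diverges from the paper's argument in a way that leaves a genuine gap. The paper proceeds in two stages: first a Banach contraction for $(h,e,\boldsymbol{\chi})$ with $(\tilde v,\bar\de)$ held as parameters, and then a \emph{Browder} (Brouwer-type) fixed point on the finite-dimensional factor $\mathcal{E}\times\R$ for $(\bar v,\bar\de)$, writing the actual Dancer parameter as $\tilde\de=\de+\bar\de$ with $\de$ the free label. The reason for this split is exactly what your proposal glosses over: throughout the paper (Proposition~\ref{comp0}, Lemma~\ref{lback2}, Lemmas~\ref{lip3} and~\ref{lip2}, Propositions~\ref{po1} and~\ref{po2}) the error, the background solution, $\phi$, and hence $\mathcal{P},\mathcal{R},\mathcal{G}$ are shown to be \emph{Lipschitz} only in $h,e,\boldsymbol{\chi}$ and merely \emph{continuous} in $v$ and $\de$. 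Since $v$ enters the construction through the curve $\bar\gamma$ (it is not interchangeable with the Fermi shift $h$), you cannot simply absorb it into $H$ and invoke a contraction: no Lipschitz bound in the $\mathcal{E}$-direction is available from the machinery built up, and you have not supplied one.

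A second, related gap is your assertion that ``the overall contraction constant is of order $\al^{\nu}$''. The Lipschitz estimates you cite from Propositions~\ref{po1} and~\ref{po2} carry a constant $C$ of order one, not $C\al^{\nu}$, and the inverses $\mathcal{Q}^{-1}$ and $\mathcal{S}$ are bounded but not small (only the Toeplitz inverse contributes an $\mathcal{O}(\al)$ factor). So even restricted to $(h,e,\boldsymbol{\chi})$ the contraction is not an automatic consequence of the stated bounds; one must re-examine the proofs to extract the additional smallness. Finally, your treatment of (\ref{line3}) is underspecified: you cannot simultaneously regard $\de$ as a free parameter and use (\ref{line3}) as the orthogonality condition (\ref{resor}) unless you introduce, as the paper does, a correction $\bar\de$ determined by the fixed-point step, with $\de$ labeling the family. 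The paper resolves all of this by handling the $(v,\bar\de)$ piece via a continuous (Browder) fixed point on a finite-dimensional ball, which is precisely what your scheme is missing.
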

\begin{proof} First we choose $k_{i}\in (0, 1), \mu \in (0,1)$ and $0< \sigma< \min\{p-2, 1\}$ in such a way that
\be \nu_{}=\min\{k_{1}, k_{2}, k_{3}, k_{4}, k_{5},
\frac{3}{4}\sigma\}.\ee Fix $\de$ and moreover assume that the
parameter satisfy \be |\de | \leq \frac{1}{2} \al^{1+ k_{4}}.\ee
In order to complete the proof we need to go through the following steps.\\
$\bullet$ Firstly we define $\tilde{v}, \tilde{h}, \tilde{e}, \bar{\de}, \bar{\boldsymbol{\chi}}$. We define $\tilde{\de}= \bar{\de}+ \de_{}$  and use this parameter $\de$ to calculate the right hand sides of (\ref{line1})-(\ref{line4}). Then these functions satisfy the assertions  of Propositions \ref{po1}, \ref{po2} and Lemma \ref{po3}. In particular, they are Lipschitz functions of $\tilde{h}$, $\tilde{e}$ and $\boldsymbol{\chi};$ and continuous functions of $\tilde{v}$ and $\de.$\\
$\bullet$ We now apply Banach fixed point theorem to the solve (\ref{line1})-(\ref{line4}) for $h$,$e$ and $\boldsymbol{\chi}$. Also we note that
\be \no \|h\|_{C^{0, \mu}_{\theta}(\R)}\leq C \|\mathcal{P}\|_{C^{0, \mu}_{\theta}(\R)}\leq C \al^{1+\nu_{}}\ee
\be \no \|e\|_{C^{0, \mu}_{\theta}(\R)}\leq C \|\mathcal{R}\|_{C^{0, \mu}_{\theta}(\R)}\leq C \al^{2+\nu_{}}\ee and
it is easy to check that
\be \no \|\boldsymbol{\chi}\|_{\al}\leq C \al^{-1}\|\mathcal{G}\|_{\al}\leq C \al^{\nu}\ee
and $v$, $\de_{}$  satisfy
\be \no  \|v\|_{\mathcal{E}}\leq C \al^{1+\nu_{}}\ee
\be \no  |\de|\leq C \al^{1+ \nu_{}}.\ee
$\bullet$ Now we define a continuous map on a finite dimensional space $\mathcal{E}\times \R$ $$\mathcal{F}:\mathcal{E}\times \R\rightarrow \mathcal{E}\times \R $$ given by $$(\bar{v}, \bar{\de})\mapsto  (v, \de ).$$
By the choice of $\nu,$ we can use Browder's fixed point theorem to obtain a fixed point of the map $\mathcal{F}.$\\
\end{proof}

\subsection{Final remarks on the proofs of Solution 2 and Solution 3}

Finally, we show what modifications are needed for the proofs of Solution 2 and Solution 3 in Section 1.2.

For Solution 2, we  use approximate solution of the following form
\begin{equation}
  \label{a4-100}
\left\{\begin{aligned} u_{L}(x, z)&= u_{L}(x, -z) &&\text{for all } (x,z)\in \R^2 \\ u_{L}(x,z)&= \bigg(\om_{\delta} (x-f(z)-h_L(z),z)-
 &&\sum_{i=1}^{\infty}\om_{0}((x, z)+\xi_i\vec{e}_{1})\bigg)(1+o_{L}(1))\end{aligned}
  \right.
\end{equation}
where the interaction function  $f$ satisfies
\begin{equation}
  \label{a2-100}
\left\{\begin{aligned}
       f''(z) &= - \Psi_L (f, z)  &&\text{in } \R\\
      f(0) &= 0, \ \       f'(0) = 0.
    \end{aligned}
  \right.
\end{equation}

For Solution 3, we  consider $\xi_{1}=0$ and use the  approximate solution of the following form
\begin{equation}
  \label{a4-101}
\left\{\begin{aligned}
u_{L}(x, z)&= u_{L}(x, -z) &&\text{for all } (x,z)\in \R^2 \\
u_{L}(x, z)&= u_{L}(-x, z) &&\text{for all } (x,z)\in \R^2 \\
u_{L}(x,z)&= \bigg(\om_{\delta} (x-f(z)-h_L(z),z) +\omega_{\delta} (x+f(z)+h_L(z), z)
 && + \om_{0}(x, z)\bigg)(1+o_{L}))\end{aligned}
  \right.
\end{equation}
where   $f$ satisfies (\ref{a2}).

The rest of  the proofs remains the same.

\bigskip
\section*{acknowledgement} The first author was supported by an ARC grant DP0984807 and the second author was supported from a General Research Fund
from RGC of Hong Kong, Joint Overseas Grant of NSFC,  and a Focused Research Scheme of CUHK. We are indebted to Prof. F. Pacard for
suggesting this problem and for many useful conversations.  We also
thank Prof. M. Kowalczyk and Prof. M. del Pino for many
constructive conversations.


\begin{thebibliography}{99}
\bibitem{A} {\sc A. Aleksandrov;} Uniqueness theorems for surfaces in the large. I. {\em Amer. Math. Soc. Transl.}  21 (1962) 341--354.

\bibitem{END} {\sc E. N. Dancer;}  New solutions of equations on $\R^N$. {\em Ann. Scuola Norm. Sup. Pisa Cl. Sci.} (4) 30 (2001), no. 3-4, 535--563 (2002).

\bibitem{CD} {\sc C. Delaunay;} Sur les surfaces de r\'evolution dont la courbure moyenne est constante. (French) {\em Jounal de math\'ematiques } 26 (1898), 43--52.

\bibitem{DKPW1}  {\sc M. del Pino, M. Kowalcyzk, F. Pacard, J. Wei;} The Toda system and multiple-end solutions of autonomous planar elliptic problems. {\em Advances in Mathematics}  224 (2010), 1462--1516.

\bibitem{DKPW2}  {\sc M. del Pino, M. Kowalcyzk, F. Pacard, J. Wei;} Multiple-end solutions to the Allen-Cahn equations in $\R^2.$   {\em J. Funct.  Anal. } 258
(2010),458--503

\bibitem{DKW1}  {\sc M. del Pino, M. Kowalcyzk, J. Wei;} Concentration on curves for nonlinear Schr\"odinger equations. {\em Comm. Pure Appl. Math.} 60 (2007), no. 1, 113--146.

\bibitem{DKW2}  {\sc M. del Pino, M. Kowalcyzk, J. Wei;} The Toda system and clustering interfaces in the
 Allen-Cahn equation. {\em Arch. Ration. Mech. Anal.} 190 (2008), no. 1, 141--187.

\bibitem{FW} {\sc A. Floer, A. Weinstein;} Nonspreading wave packets for the cubic Schr\"{o}dinger equation with a bounded potential. {\it J. Funct. Anal.} {\bf 69} (1986), no. 3, 397--408.


\bibitem{G} {\sc K. G-Brauckmann;} New surfaces of constant mean curvature. {\em Math. Z.} 214 (1993), no. 4,
527--565.

\bibitem{GKK} {\sc K. G-Brauckmann, R. B. Kusner; M. Sullivan;} Triunduloids: embedded constant mean curvature surfaces with three ends and genus zero. {\it J. Reine Angew. Math.} 564 (2003), 35--61.

\bibitem{GKK2} {\sc K. G-Brauckmann, R. B. Kusner; M. Sullivan;} Constant mean curvature surfaces with three ends. {\em Proc. Natl. Acad. Sci. USA} 97 (2000), no. 26, 14067--14068

\bibitem{GK} {\sc K. G-Brauckmann, R. B. Kusner;} Embedded constant mean curvature surfaces with special symmetry. {\em Manuscripta Math. } 99 (1999), no. 1, 135--150.

\bibitem{GNN} {\sc B. Gidas, W. M. Ni, L. Nirenberg; }  Symmetry of positive
solutions of nonlinear elliptic equations in $R^{N}$, {\em Adv. Math.
Suppl. Stud.} 7A (1981) 369--402.

\bibitem{Kap} {\sc N. Kapouleas;} Complete constant mean curvature surfaces in Euclidean three-space. {\em Ann. of Math.} (2) 131 (1990), no. 2, 239--330.

\bibitem{KKS} {\sc N. Korevaar, R. Kusner, B. Solomon;} The structure of complete embedded surfaces with
constant mean curvature. {\em J. Differential Geom.} 30 (1989), no. 2, 465--503.

\bibitem{K} {\sc Man Kam Kwong;} Uniqueness of positive solutions of $\De u-u+ u^{p}=0.$ {\em Arch. Ration. Mech. Anal.} 105 (1989), no. 5, 243--266.

\bibitem{LWW} {\sc F. H. Lin,  W. M. Ni, J. Wei;} On the number of interior peak solutions for a singularly perturbed Neumann problem. {\it Comm. Pure Appl. Math.} 60 (2007), no. 2, 252--281.

\bibitem{MM1} {\sc A. Malchiodi, M. Montenegro;}
 Boundary concentration phenomena for a singularly perturbed elliptic problem,
{\em  Commun. Pure Appl. Math.} 55 (2002), 1507-1568.

\bibitem{MM2} {\sc A. Malchiodi, M. Montenegro;}
Multidimensional boundary layers for a singularly perturbed Neumann problem.
{\em Duke Math. J. } 124 (2004), no. 1, 105-143.

\bibitem{MM}  {\sc F. Mahmoudi, A. Malchiodi;} Concentration on minimal submanifolds for a singularly perturbed Neumann problem. {\em Adv. Math.} 209 (2007), no. 2, 460--525.

\bibitem{M} {\sc A. Malchiodi;} Some new entire solutions of a semilinear elliptic problem in $\R^N$. {\em Advances in Math. } 221 (2009), 1843-1909.

\bibitem{MP} {\sc R.Mazzeo, F. Pacard;} Constant mean curvature surfaces with Delaunay ends. {\em Comm. Anal. Geom.} 9 (2001), no. 1, 169--237.

\bibitem{Me} {\sc W. Meeks;} The topology and geometry of embedded surfaces of constant mean curvature. {\em J. Differential Geom.} 27 (1988), no. 3, 539--552.

\bibitem{Nisurvey} {\sc W.-M. Ni; }  Qualitative properties of solutions to elliptic problems,  {\em Stationary Partial Differential Equations, vol. I, Handb. Differ. Equ. North-Holland, Amsterdam } (2004), 157-233.

\bibitem{MPW} {\sc M. Musso, F. Pacard, J. Wei;} Finite-energy sign-changing solutions with dihedral symmetry for the stationary nonlinear Schrodinger equation. {\em J. Eur. Math. Soc.}, to appear.

\bibitem{NW} {\sc W. M. Ni, Juncheng Wei;} On the location and profile of spike-layer solutions to singularly perturbed semilinear Dirichlet problems. {\em Comm. Pure Appl. Math.} 48 (1995), no. 7, 731--768.

\bibitem{OG} {\sc Yong-Geun Oh;} On positive multi-bump bound states of nonlinear Schr\"{o}dinger equations under multiple well potential. {\it Comm. Math. Phys.} {\bf 131} (1990), no. 2, 223--253.

\bibitem{OG1} {\sc Yong-Geun Oh;} Existence of semiclassical bound states of nonlinear Schr\"{o}dinger equations with potentials of the class $(V)\sb a$. {\it Comm. PDE} {\bf 13} (1988), no. 12, 1499--1519.



\bibitem{PQS} {\sc P. Polacik, P. Quittner, P. Souplet;}  Singularity and decay estimates in superlinear problems via Liouville-type theorems. I. Elliptic equations and systems. {\em Duke Math. J.} 139 (2007), no. 3, 555--579.

\bibitem{ratzin1} {\sc J. Ratzkin;} An end-to-end gluing construction for surfaces of constant mean curvature, PhD Thesis, University of Washington (2001).

\bibitem{ratzin2} {\sc J. Ratzkin;} An end-to-end gluing construction for metrics of constant positive scalar curvature, {\em Indiana Univ. Math. J.} 52(2003), 703-726.

\bibitem{Wei} {\sc J.-C. Wei; }  Existence and Stability of Spikes for the Gierer-Meinhardt System,  {\em Stationary Partial Differential Equations, vol. V, Handb. Differ. Equ. North-Holland, Amsterdam} (2008), 487-585.

\end{thebibliography}
\end{document}